\pgfplotsset{compat=newest}
\pgfplotsset{plot coordinates/math parser=false}
\newlength\figureheight
\newlength\figurewidth 
\newcommand{\R}{\mathbb{R}}
\newcommand{\N}{\mathbb{N}}
\def\1{\raisebox{2pt}{\rm{$\chi$}}}
\newcommand{\Lip}{\operatorname{Lip}}
\theoremstyle{plain}
\newtheorem{definition}{Definition}[]
\newtheorem{def-thm}{Definition-Theorem}[]
\newtheorem{proposition}{Proposition}
\newtheorem*{proposition ast}{Proposition \ref{Thm X_I charac}*}
\newtheorem{theorem}{Theorem}
\newtheorem{corollary}{Corollary}
\newtheorem{lemma}{Lemma}
\newtheorem{remark}{Remark}
\theoremstyle{definition}
\theoremstyle{remark}
\begin{document}

\title[Differentiability for Hamilton-Jacobi equations]{Differentiability with respect to the initial condition for Hamilton-Jacobi equations}

\begin{abstract}
We prove that the viscosity solution to a Hamilton-Jacobi equation with a smooth convex Hamiltonian of the form $H(x,p)$  is differentiable with respect to the initial condition. Moreover, the directional G\^ateaux derivatives can be explicitly computed almost everywhere in $\R^N$ by means of the optimality system of the associated optimal control problem.
We also prove that, in the one-dimensional case in space and in the quadratic case in any space dimension, these directional G\^ateaux derivatives actually correspond to the unique duality solution to the linear transport equation with discontinuous coefficient, resulting from the linearization of the Hamilton-Jacobi equation.
The motivation behind these differentiability results arises from the following optimal inverse-design problem: given a time horizon $T>0$ and a target function $u_T$,  construct an initial condition such that the corresponding viscosity solution at time $T$  minimizes the $L^2$-distance to $u_T$.
Our differentiability results allow us to derive a necessary first-order optimality condition for this optimization problem, and the implementation of gradient-based methods to numerically approximate the optimal inverse design.
\end{abstract}

\author{Carlos Esteve-Yag\"ue}
\thanks{AMS 2020 MSC: 35F21, 58C20, 35R30,49K20, 35Q49 \\
\textit{Keywords: Hamilton-Jacobi equation,  G\^ateaux derivatives, inverse design problem, transport equation, duality solutions} \\
\textbf{Funding:} 
This project has received funding from the European Research Council (ERC) under the European Union’s Horizon 2020 research and innovation programme (grant agreement NO: 694126-DyCon),  the Alexander von Humboldt-Professorship program, the European Unions Horizon 2020 research and innovation programme under the Marie Sklodowska-Curie grant agreement No.765579-ConFlex, the Transregio 154 Project ‘‘Mathematical Modelling, Simulation and Optimization Using the Example of Gas Networks’’, project C08, of the German DFG, the Grant MTM2017-92996-C2-1-R COSNET of MINECO (Spain) and the Elkartek grant KK-2020/00091 CONVADP of the Basque government. 
}
\address {Carlos Esteve-Yag\"ue \newline \indent
{Departamento de Matem\'aticas, \newline \indent
Universidad Aut\'onoma de Madrid},
\newline \indent
{28049 Madrid, Spain}
\newline \indent \hspace{2.5cm} \text{and} \newline \indent
{Chair of Computational Mathematics, Fundaci\'on Deusto}
\newline \indent
{Av. de las Universidades, 24}
\newline \indent
{48007 Bilbao, Basque Country, Spain}
}
\email{\texttt{carlos.esteve@deusto.es}}

\author{Enrique Zuazua}
\address{Enrique Zuazua \newline \indent
{Chair for Dynamics,Control and Numerics -  Alexander von Humboldt-Professorship
\newline \indent
Department of Data Science, \newline \indent
Friedrich-Alexander-Universit\"at Erlangen-N\"urnberg}
\newline \indent
{91058 Erlangen, Germany}
\newline \indent \hspace{2.5cm} \text{and} \newline \indent
{Chair of Computational Mathematics, Fundación Deusto}
\newline \indent
{Av. de las Universidades, 24}
\newline \indent
{48007 Bilbao, Basque Country, Spain}
\newline \indent \hspace{2.5cm} \text{and} \newline \indent
{Departamento de Matemáticas, \newline \indent
Universidad Autónoma de Madrid,}
\newline \indent
{28049 Madrid, Spain}
}
\email{\texttt{enrique.zuazua@fau.de}}

\date{\today}

\maketitle

\section{Introduction}

We consider the initial-value problem associated to a Hamilton-Jacobi equation of the form
\begin{equation}\label{HJ eq}
\left\{\begin{array}{ll}
\partial_t u + H (x,\nabla_x u) = 0 & \text{in} \ (0,T)\times \R^N \\
u(0, \cdot)= u_0 & \text{in} \ \R^N.
\end{array}\right.
\end{equation}
where $T>0$,  $u_0\in \Lip (\R^N)$ is the given initial condition, and $H$ is the given Hamiltonian
 $$H:\R^N \times \R^N \to \R,$$
 which will be assumed to satisfy the following hypotheses: 
\begin{equation}\label{general Hcond}
\tag{H1}
H\in C^2(\R^{2N}), \quad
c_0I_N \leq H_{pp}(x, p) \quad  \forall x,p\in \R^N,  \quad \text{for some constant $c_0>0$},
\end{equation}
\begin{equation}\label{H bounded}
\tag{H2}
H(x,p) \geq -C_0
\quad \forall x,p\in \R^N \quad \text{and} \quad H(x,0) \leq C_0
 \quad \forall x\in \R^N,
 \quad
 \text{for some constant $C_0\geq 0$},
\end{equation}
and that $H$ satisfies the two following Lipschitz estimates:
 there exists $C_{\Lip} >0$ such that
\begin{equation}\label{H lipschitz in x}
\tag{H3}
|H_x(x,p)| \leq C_{\Lip} \qquad \forall (x,p) \in \R^N\times \R^N,
\end{equation}
and for every $R>0$, there exists $C_R>0$ such that
\begin{equation}\label{H lipschitz in p}
\tag{H4}
|H_p (x,p)| \leq C_R, \qquad \forall (x,p)\in\R^N\times \R^N, \quad \text{s.t.} \quad |p| \leq R.
\end{equation}
In \eqref{general Hcond}, $I_N$ denotes the $N\times N$ identity matrix, and the  inequality is understood in the usual partial order of squared symmetric matrices, i.e. $A\geq B$ means that $A-B$ is definite positive.

The study of Hamilton-Jacobi equations such as \eqref{HJ eq} arises in the context of optimal control theory and problems in calculus of variations, in which the so-called value function satisfies an associated dynamic programming equation, also known as Bellman equation \cite{bellman1956dynamic}, which, in the deterministic continuous setting, happens to be equivalent to a first-order non-linear partial differential equation of the form \eqref{HJ eq}, see for instance \cite[Chapter 10]{evans1998partial} or \cite{fleming2006controlled}.
We also mention that Hamilton-Jacobi equations are intimately connected, via the dynamic programming principle,  to problems in reinforcement learning \cite{bertsekas2019reinforcement},  where many algorithms consist in approximating the value function associated to an optimal control problem by exploiting the fact that it satisfies the associated dynamic programming equation (the Bellman equation).

By the classical theory of viscosity solutions \cite{crandall1992user,crandall1983viscosity,lions1982generalized}, it is well-known that, for any initial condition $u_0\in \Lip (\R^N)$, there exists a unique solution $u\in \Lip  ([0,T]\times \R^N)$ which satisfies \eqref{HJ eq} in the viscosity sense. 
Moreover,  the convexity hypothesis \eqref{general Hcond} on the Hamiltonian induces a regularizing effect on the viscosity solution, which makes it be semiconcave with respect to $(t,x)$ (see \cite{barron1999regularity,bianchini2012sbv, cannarsa2014pointwise,  cannarsa1997regularity, lions2020new} for results concerning the regularity of viscosity solutions to Hamilton-Jacobi equations).
In this work,  we study the differentiability of the viscosity solution to \eqref{HJ eq} with respect to the initial condition $u_0$.
In particular, we shall address the following issues concerning the initial-value problem \eqref{HJ eq}:
 
\begin{enumerate}[label*=\arabic*.]
\item Our main goal is to establish the differentiability of the viscosity solution to \eqref{HJ eq} with respect to the initial condition $u_0$. More precisely,  for any $T>0$ and $u_0, w \in \Lip(\R^N)$, we are interested in the existence of the directional G\^ateaux derivative of the solution $u(t,x)$ at the initial condition $u_0$, in the direction $w$, that we shall denote by $\partial_w u$, and is defined as the limit 
$$
\partial_w u (t,\cdot) : = \lim_{\delta\to 0^+} \dfrac{u_\delta (t,\cdot) - u(t,\cdot)}{\delta} \qquad \text{as} \  \delta\to 0^+,\qquad \text{for} \ t\in [0,T],
$$
where, for each $\delta >0$, $u_\delta$ and $u$ are the viscosity solutions to \eqref{HJ eq} with initial conditions $u_0 + \delta w$ and $u_0$ respectively.
\item The motivation behind the previous point arises in the context of inverse-design problems associated to \eqref{HJ eq}, for which differentiability results may allow us to derive first-order optimality conditions and the implementation of gradient-based algorithms to numerically approximate an optimal inverse design.  
In particular, we address the following inverse-design problem: for a given time horizon $T>0$ and a target function $u_T\in \Lip(\R^N)$, construct an initial condition $u_0$ such that the corresponding viscosity solution to \eqref{HJ eq} at time $T$ minimizes the $L^2$-distance to the target function $u_T$. 
This problem can also be cast as the orthogonal projection of $u_T$ onto the reachable set $\mathcal{R}_T$,  that we define as
\begin{equation}
\label{def: reachable set}
\mathcal{R}_T := \left\{ u_T\in \Lip(\R^N)\ : \quad \exists u_0 \in \Lip(\R^N) \ \text{s.t.  the solution $u$ to \eqref{HJ eq} satisfies} \ u(T,\cdot) = u_T  \right\}
\end{equation}
i.e., the set of functions $u_T$ such that there exists at least an initial condition $u_0$ for which the viscosity solution to \eqref{HJ eq} coincides with $u_T$ at time $T$.
\end{enumerate}

The inverse-design problem described in the second point above can be seen as an optimal control problem subject to the dynamics given by the Hamilton-Jacobi equation \eqref{HJ eq}, where the control is the initial condition $u_0$.
Optimal control problems associated to first-order Hamilton-Jacobi equations such as \eqref{HJ eq} are much less studied in the literature compared to optimal control problems subject to other type of PDEs as for instance second-order parabolic and elliptic equations.
This is mostly due to the fact that the adjoint formulation to compute the gradient of the functional presents difficulties when the gradient of the solution develops discontinuities, and the solution ceases to exist in the classical sense.

For the case of scalar conservation laws in dimension 1, optimal control problems have been considered in \cite{bressan2007optimality, colombo2004optimization, ulbrich2002sensitivity, ulbrich2003adjoint}.  In their approach, the variation of the functional with respect to the control is computed by using the notion of \emph{shift differentiability}, introduced by Bressan and Guerra in \cite{bressan1997shift},  and extended to the multidimensional case in \cite{bianchini2000shift, bressan1999shift}.
The notion of shift derivative on the space $BV$ of integrable functions of bounded variation provides differentiability for the solution operator associated to scalar conservation laws by carefully measuring the sensitivity of the shock discontinuities.
Despite the close relation between scalar conservation laws and Hamilton-Jacobi equations \cite{colombo2019initial}, we do not make use of the notion of shift derivative since it requires a precise description of the singular set, which turns out to be rather complicated, specially in the multi-dimensional case, and might lead us to make further regularity assumptions.
Instead, we establish the differentiability of the solution operator associated to \eqref{HJ eq} by using the fact that the viscosity solution can be written as the value function of an associated problem in calculus of variations.  
In the one-dimensional case in space, and for quadratic Hamiltonians of the form $H(x,p) = |p|^2 + f(x)$ in any space dimension, we can also use the differentiability result to provide a dual (or adjoint) formulation of the gradient of the functional.
A  similar approach to ours, which is based on the adjoint equation (and does not make use of the notion of shift differentiability) is used in \cite{bouchut1999differentiability,colombo2011control} for scalar conservation laws in dimension 1.

\subsection{Contributions}

The main results in this work concern the differentiability of the viscosity solution to \eqref{HJ eq} with respect to the initial condition $u_0$. 
In view of the well-posedness of the initial value problem \eqref{HJ eq},  for any $t\in (0,T]$, we can define the nonlinear operator
\begin{equation}\label{forward viscosity operator intro}
\begin{array}{cccc}
S_t^+ : & \Lip(\R^N) & \longrightarrow & \Lip(\R^N) \\
 & u_0 &\longmapsto & u(t,\cdot),
\end{array}
\end{equation}
which associates, to any initial condition $u_0$, the viscosity solution to \eqref{HJ eq} at time $t$.
Our goal is therefore to establish the differentiability of the operator $S_t^+$.
Here we sum up our main contributions concerning this issue:

\begin{enumerate}
\item In Theorem \ref{thm: differentiability general weak convergence}, we prove that for any $t\in (0,T]$, the operator $S_t^+$ is G\^ateaux differentiable at any $u_0\in \Lip(\R^N)$,  in any direction $w\in \Lip(\R^N)$, with respect to the $L^1_{loc}$--convergence. Namely, we prove that for any $u_0,w\in \Lip(\R^N)$, there exists a function $\partial_w S_t^+ u_0\in L^\infty_{loc}(\R^N)$ such that
$$
\dfrac{S_t^+ (u_0 + \delta w) - S_t^+ u_0}{\delta} \longrightarrow \partial_w S_t^+ u_0, \qquad \text{as} \quad \delta\to 0^+, \quad \text{in} \quad L^1_{loc} (\R^N).
$$
Moreover, the function $\partial_w S_t^+ u_0$ can be explicitly computed almost everywhere in $\R^N$ by means of the optimality system of the optimal control problem associated to the Hamilton-Jacobi equation \eqref{HJ eq}.

\item Next, for the one-dimensional case in space and for qudratic Hamiltonians of the form
$$
H(p,x) = |p|^2 + f(x),
$$
in any space dimension, we prove in  Theorem \ref{thm:gateaux deriv transport} that the function $v:(0,T]\times\R^N \longrightarrow \R$, defined as
$$
v(t,x) := \partial_w S_t^+ u_0(x) \qquad \forall (t,x) \in (0,T]\times \R^N
$$ 
is the unique duality solution \cite{bouchut1998one,bouchut2005uniqueness} to the linear transport equation 
\begin{equation}\label{eq: transport eq intro Ham}
\left\{\begin{array}{ll}
\partial_t v + H_p(x, \nabla_x u )\cdot \nabla_x v = 0 & (t,x) \in (0,T)\times\R^N \\
v(0,\cdot ) = w & x\in \R^N,
\end{array}\right.
\end{equation}
where $u$ is the viscosity solution to \eqref{HJ eq}.
Note that, due to the low regularity of the viscosity solution $u$, the transport coefficient in \eqref{eq: transport eq intro Ham} might have discontinuities.
\end{enumerate}

\begin{remark}[Transport equations with  one-sided-Lipschitz coefficient]
In particular, the conclusions of Theorem \ref{thm:gateaux deriv transport} establish the existence and uniqueness of a duality solution for the linear transport equation \eqref{eq: transport eq intro Ham}, for any  initial condition $w\in \Lip(\R^N)$. 
For this purpose, we restrict ourselves to the one-dimensional case in space and to quadratic Hamiltonians of the form
\begin{equation}
\label{quadratic hamiltonian intro}
H(x,p) = |p|^2 + f(x),
\end{equation}
where $f\in C^2 (\R^N)$ is bounded and globally Lipschitz.
The restriction arises since, in order to provide uniqueness for the duality solution to \eqref{eq: transport eq intro Ham},  we need to ensure that the transport coefficient
\begin{equation}
\label{transport coeff intro}
a(t,x) = H_p(x, \nabla_x u ) \in \big[L^\infty ((0,T)\times \R)  \big]^N
\end{equation}
satisfies the one-sided Lipschitz condition (OSLC)
\begin{equation}\label{OSLC intro}
 \langle a(t,y)-a(t,x), \, y-x \rangle  \leq \alpha(t)  |y-x|^2  \qquad \text{for a.e.} \  (t,x,y)\in (0,T)\times\R^N\times\R^N,
\end{equation}
with $\alpha(t) = C/t$ for some constant $C = C(u_0, H)>0$ depending on $u_0$ and $H$. 
In the one-dimensional case in space and for quadratic Hamiltonians in any space dimension, this property follows from the uniform convexity of the Hamiltonian $H$ and the semiconcavity of the viscosity solution (see Proposition \ref{prop: semiconcavity property}).
For the case of general convex Hamiltonians in multiple dimensions, the semiconcavity of the solution does not in general imply the one-sided-Lipschitz condition on the transport coefficient $a(t,x)$, and therefore, the uniqueness of the duality solution to \eqref{eq: transport eq intro Ham} is not straightforward.

Observe that \eqref{OSLC intro} implies only an upper bound for $\operatorname{div}_x a$, and thus, $\operatorname{div}_x a$ may not be absolutely continuous with respect to the Lebesgue measure, preventing us from using the approach by DiPerna-Lions in \cite{diperna1989ordinary} or by Ambrosio in  \cite{ambrosio2004transport}.
The notion of duality solution, as introduced by Bouchut and James in \cite{bouchut1998one} for the one-dimensional case,   and by Bouchut-James-Mancini in \cite{bouchut2005uniqueness} for the $N$-dimensional case,  provides existence, uniqueness and stability for the initial-value problem associated to linear transport equations under a OSLC condition slightly stronger than \eqref{OSLC intro}. 
Their proof relies on the well-posedness of the backward dual problem (which is a conservative transport equation), in the sense of reversible solutions (see subsection \ref{subsec: duality solutions} for further details).
However, in \cite{bouchut1998one,bouchut2005uniqueness}, it is  assumed that the function $\alpha$ in \eqref{OSLC intro} satisfies $\alpha\in L^1(0,T)$,  which is not fulfilled in our case.
In Theorem \ref{thm:gateaux deriv transport}, we are able to overcome this difficulty and prove existence and uniqueness of a duality solution for linear transport equations, when the transport coefficient $a(t,x)$ has the form \eqref{transport coeff intro} for some Hamiltonian $H$ satisfying \eqref{general Hcond}, \eqref{H bounded}, \eqref{H lipschitz in x}, and when the initial condition is continuous. 
However, our proof only applies to the one-dimensional case and to the multi-dimensional case when $H$ has the form \eqref{quadratic hamiltonian intro}.
Our proof relies on the possibility of uniquely extending the reversible solutions to the backward dual problem  by a measure at time $t=0$ (see Proposition \ref{prop:reversible unique extension} in subsection \ref{subsec: proof linearization}). 
The fact that the backward solutions can only be extended at $t=0$ by a measure restricts the well-posedness result for the forward equation to the case of continuous initial conditions.
\end{remark}

Let us now turn our attention to the second goal of this work, which concerns the optimal inverse-design problem associated to \eqref{HJ eq}.
This problem can be formulated as an optimal control problem in which the dynamics are given by the Hamilton-Jacobi equation \eqref{HJ eq}, and the control is, precisely, the initial condition $u_0$, i.e.
\begin{equation}\label{L2 OCP}
\begin{array}{l}
\underset{u_0\in \Lip_0(\R^N)}{\text{minimize}} \ \mathcal{J}_T (u_0) : =  \| u(T,\cdot) - u_T(\cdot) \|_{L^2 (\R^N)}^2 \\
\qquad 
\begin{array}{ll}
 \text{s.t. } & \text{$u$ is the viscosity solution to \eqref{HJ eq},} \\
& \text{with initial condition $u_0$.}
\end{array}
\end{array}
\end{equation}
Here, $T>0$ and $u_T\in \Lip_0(\R^N)$ are the given time-horizon and  target function respectively. 
The notation $\Lip_0(\R^N)$ stands for the space of Lipschitz functions with compact support in $\R^N$.

In this case, the assumptions on the Hamiltonian are \eqref{general Hcond} and \eqref{H lipschitz in x} just as before, but this time, assumption \eqref{H bounded} is assumed to hold with $C_0=0$.
This choice guarantees that the operator $S_T^+$ satisfies
$$
S_T^+ u_0(\cdot) \in \Lip_0 (\R^N) \qquad
\forall u_0 \in \Lip_0(\R^N),
$$  
which, along with the fact that the target $u_T$ is compactly supported, ensures that the set of admissible controls (initial conditions $u_0$ such that $\mathcal{J}_T (u_0) < \infty$) is nonempty (see Remark \ref{rmk: C=0}).
Our conclusions concerning the optimal control problem \eqref{L2 OCP} are as follows:

\begin{enumerate}
\item In Theorem \ref{thm:optimality condition}, we use the differentiability result from Theorem \ref{thm: differentiability general weak convergence} to derive a first-order optimality condition for the optimization problem \eqref{L2 OCP}, which can be expressed by means of the gradient of the functional  $\mathcal{J}_T$, i.e. the linear functional
$$
D \mathcal{J}_T (u_0) : w\in \Lip(\R^N) \longmapsto \partial_w \mathcal{J}_T (u_0),
$$
which defines a Radon measure in $\R^N$.
In Theorem \ref{thm:optimality condition}, the gradient of $\mathcal{J}_T$ is given explicitly by using the optimality system of the optimal control problem associated to the Hamilton-Jacobi equation \eqref{HJ eq}.
 
\item Then,  for the one-dimensional case in space, and for quadratic Hamiltonians of the form \eqref{quadratic hamiltonian intro} in any space dimension, we prove in Theorem \ref{thm: opt cond dual equation}  that, for any $u_0,w\in \Lip(\R^N)$, the directional derivative of the functional $\mathcal{J}_T$ at $u_0$ in the direction $w$ can be given by duality as
\begin{equation}
\label{directional derivative J_T intro}
\partial_w \mathcal{J}_T (u_0) = 2\displaystyle\int_{\R^N} w(x) d\pi^0(x),
\end{equation}
where $\pi^0\in \mathcal{M}(\R^N)$ is the unique Radon measure  which continuously\footnote{In Proposition \ref{prop:reversible unique extension}, we prove that the unique reversible solution $\pi\in C((0,T]; \, L^1_{loc} (\R^N)$ to \eqref{backward conservative intro} converges,  as $t\to 0^+$, to a Radon measure $\pi^0$ in the $\text{weak}^\ast$ topology in the space of measures.} extends  at $t=0$ the unique reversible solution to the backward conservative transport equation
\begin{equation}
\label{backward conservative intro}
\left\{\begin{array}{ll}
\partial_t \pi + \text{div}_x (a(t,x) \pi ) = 0 & \text{in} \ (0,T)\times \R^N, \\
\pi (T,x) = S_T^+u_0 (x) - u_T(x) & \text{in} \ \R^N,
\end{array}\right.
\end{equation}
where the transport coefficient $a$ is defined as in \eqref{transport coeff intro}.
This allows us to derive the same first-order optimality condition as in Theorem \ref{thm:optimality condition}, but this time the gradient of the functional $\mathcal{J}_T$ is given by the Radon measure
$D\mathcal{J}_T(u_0) = 2\pi^0$.

\item For completeness purposes, in Theorem \ref{thm:existence L2 projection}, we prove that the inverse design problem \eqref{L2 OCP} admits at least one solution.  We point out that uniqueness of a minimizer is not true in general due to the lack of backward uniqueness of the Hamilton-Jacobi equation \eqref{HJ eq}, see \cite{colombo2019initial,esteve2020inverse}.
The proof of the existence result relies on a compactness argument and utilizes the so-called backward viscosity operator \cite{barron1999regularity}, denoted by $S_T^-$, which is defined in an analogous way to $S_T^+$.
\item Finally, we look at the related optimization problem of the orthogonal projection of $u_T\in \Lip_0(\R^N)$ onto the reachable set $\mathcal{R}_T$, which can be formulated as
$$
\underset{\varphi\in \mathcal{R}_T}{\text{minimize}} \ \mathcal{H}_T (u_0) : =  \| \varphi - u_T \|_{L^2 (\R^N)}^2.
$$
For this problem,  under the assumption that the Hamiltonian $H$ is $x$-independent and quadratic, we are able to prove, in Theorem \ref{thm:existence and uniqueness}, existence and uniqueness of a minimizer. 
In this case, the proof uses Hilbert's projection Theorem, that we can apply since, for $x$-independent quadratic Hamiltonians,  the reachable set $\mathcal{R}_T$ can be fully characterized by means of a sharp semiconcavity estimate from \cite{esteve2020inverse,lions2020new}, that allows us to prove that the set $\mathcal{R}_T\cap L^2(\R^N)$ is closed and convex in $L^2(\R^N)$.
Note that, although the orthogonal projection of $u_T$ onto $\mathcal{R}_T$ is unique, the solution to the inverse design problem \eqref{L2 OCP} is not expected to be unique, not even for $x$-independent quadratic Hamiltonian. In Corollary \ref{cor:set of inverse designs} we give a full characterization of the solutions to \eqref{L2 OCP} when $H$ is quadratic and independent of $x$.
\end{enumerate}

\begin{remark}[Gradient-based methods]
\label{rmk:gradient-method}
A popular method to numerically approximate a solution to optimal control problems such as \eqref{L2 OCP} is the so-called \emph{gradient-descent algorithm}, which consists in  repeatedly updating the control parameter (in this case $u_0$) in the opposite direction to the gradient of the functional to be minimized (in this case $\mathcal{J}_T$).
Note however that, in view of Theorems \ref{thm:optimality condition} and \ref{thm: opt cond dual equation}, the gradient $D \mathcal{J}_T (u_0)$ is a Radon measure in $\R^N$, and then,  the process of updating the initial condition in the opposite direction to the gradient might not be possible, as it would exit the space $\Lip(\R^N)$.
We can nonetheless implement an approximate version of the gradient descent algorithm, in which at each step,  the initial condition is updated, not in the exact opposite direction to the gradient, but in a Lipschitz approximation of it, $\widehat{v}$, making sure that the directional derivative, as defined in \eqref{directional derivative J_T intro}, satisfies $\partial_{\widehat{v}} \mathcal{J}_T(u_0) > 0$.

The method is initialized with some initial condition $u_0^{(0)}\in \Lip_0(\R^N)$, and then, it is updated at each step by means of the following formula:
\begin{equation}
\label{gradient descent}
u_0^{(n+1)} := u_0^{(n)} - \gamma_n  \dfrac{\widehat{v}_n}{\| \widehat{v}_n\|_{\infty}},
\end{equation}
where $\widehat{v}_n$ is a Lipschitz approximation of the Radon measure $D \mathcal{J}_T (u_0^{(n)})$, and $\| \widehat{v}_n\|_\infty$ stands for the sup-norm of $\widehat{v}_n$.
The step-size parameter $\gamma_n\in (0,1)$ can be chosen in an adaptive manner depending on $n$ to ensure the convergence of the algorithm.
Besides, it is to be pointed out that, since the functional
$$
u_0 \longmapsto \mathcal{J}_T (u_0) = 
\| S_T^+ u_0(\cdot) - u_T(\cdot) \|_{L^2(\R^N)}^2
$$
is not necessarily convex, the gradient descent algorithm might converge to a local minimum instead of a global one.
\end{remark}

In Figure \ref{Fig:inverse-designs}, we illustrate the application of the approximate gradient descent algorithm described in Remark \ref{rmk:gradient-method}, to the functional $\mathcal{J}_T$, when the target $u_T$ is unreachable. As initialization, we have chosen the initial condition given by the backward viscosity operator applied to the target $u_T$, i.e. $u_0^{(0)} := S_T^- u_T$, which is represented in the plot at the left.
In the center, we plotted the initial condition $\widehat{u}_0$, obtained after several steps of the approximate gradient descent algorithm \eqref{gradient descent}.
The picks of the function $\widehat{u}_0$ pointing downward  arise since, at each step, we are approximating the gradient $D\mathcal{J}_T(u_0^{(n)})$, which is in general a Radon measure,  by a Lipschitz function. 
In the plot at the right, we see the function
$$\tilde{u}_0 : = S_T^- (S_T^+ \widehat{v}_0),$$
which, in view of the well-known property
$S_T^+ (S_T^-( S_T^+ u_0)) = S_T^+ u_0$, which holds for all $u_0\in \Lip(\R^N)$, see \cite{barron1999regularity,misztela2020initial},  we have that
$$
\mathcal{J}_T (\tilde{u}_0) = \mathcal{J}_T(\widehat{u}_0).
$$
Then, the function $\tilde{u}_0$ can be seen as a regularized version of the (approximately) optimal inverse design $\widehat{u}_0$.  This picture also shows how the minimizers for the optimal control problem \eqref{L2 OCP} are not expected to be unique. 
In Figure \ref{Fig:L2 projection},  we see the image by $S_T^+$ of the three initial conditions depicted in Figure \ref{Fig:inverse-designs}: in the plot at the left, we see the function $S_T^+ (S_T^- u_T)$,  whereas the plot at the right represents the functions $S_T^+ \widehat{u}_0$ and $S_T^+ \tilde{u}_0$, which are indeed equal.

\begin{center}
\begin{figure}
\includegraphics[scale=.18]{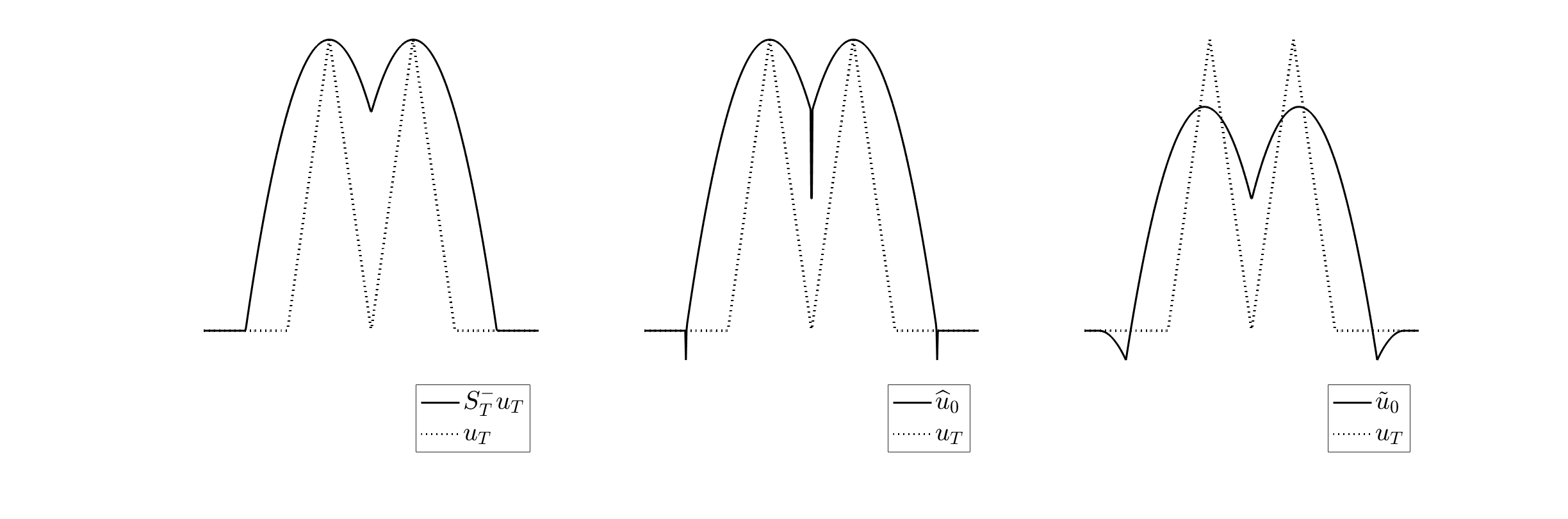}
\caption{Different inverse designs for an unreachable target. 
At the left we see the inverse design obtained as the backward viscosity solution to \eqref{HJ eq} with terminal condition $u_T$. 
In the center we have the initial condition after several iterations of the approximate gradient descent algorithm described in Remark \ref{rmk:gradient-method}.
At the right, we see the function $S_T^- (S_T^+ \widehat{u})$, which is a regularized version of the (approximately) optimal inverse design depicted in the center.}
\label{Fig:inverse-designs}
\end{figure}
\end{center}

\subsection{Previous related results on inverse design for Hamilton-Jacobi equations}

One of the main motivations of the present work arises in the context of inverse-time design problems associated to \eqref{HJ eq}, in which, for a given (possibly noisy) observation $u_T$ of the solution at some time $T>0$, one aims at reconstructing the corresponding initial condition.
In the recent works \cite{colombo2019initial,esteve2020inverse}, it is shown that  this inverse problem is highly ill-posed. 
In one hand, the given observation $u_T$ might not be reachable, meaning that there exists no initial condition for which the viscosity solution coincides with $u_T$ at time $T$.
On the other hand, when an inverse design for $u_T$ exists, it might not be unique.

In \cite{esteve2020inverse}, we treated the case of  $x$-independent Hamiltonians $H(x,p) = H(p)$ satisfying the following properties
\begin{equation}\label{Hcond previous results}
H\in C^2(\R^N), \qquad H_{pp} (p) >0 \quad \forall p\in \R^N \quad \text{and} \quad \lim_{|p|\to \infty} \dfrac{H(p)}{|p|} = +\infty.
\end{equation}
We showed that, in this case,  existence and uniqueness of inverse designs are intimately related to the regularity of the target $u_T$.
More precisely, the existence of at least an inverse design depends on the semiconcavity properties of $u_T$, whereas the uniqueness is related to its differentiability.

In the case when there exists no initial condition $u_0\in \Lip(\R^N)$ such that $S_T^+ u_0 = u_T$,  the question that arises naturally is that of constructing an initial condition $\widehat{u}_0\in \Lip(\R^N)$ such that $S_T^+ \widehat{u}_0$ is ``as close as possible'' to $u_T$. 
Of course, one can consider different criteria to measure the closeness of $S_T^+ \widehat{u}_0$ to the target $u_T$.
In this work we consider the $L^2$-distance, but other choices can be considered.  We refer to \cite{liardanalysis,liard2021initial} for similar results for the one-dimensional Burgers equation with convex flux, and to \cite{refId0} for a one-dimensional conservation law with a discontinuous space-dependent flux.

For the case of $x$-independent Hamiltonians satisfying \eqref{Hcond previous results}, we studied in \cite{esteve2020inverse} the reachable function $u_T^\ast$ obtained after a backward-forward resolution of \eqref{HJ eq} with terminal condition $u_T$.
This method gives rise to the smallest element in $\mathcal{R}_T$ which is bounded from below by $u_T$, the so-called semiconcave envelope of $u_T$. 
Hence, the inverse design $u_0 = S_T^- u_T$, is optimal if one wants to approximate $u_T$ with functions lying above $u_T$.
Moreover,  for the case of $x$-independent quadratic Hamiltonians of the form
$$
H(p) = \dfrac{\langle Ax, \, x\rangle}{2} \qquad \text{with $A$ being any positive definite matrix,}
$$
we proved in \cite{esteve2020inverse} that the function $u^\ast_T : = S_T^+ (S_T^- u_T)$, obtained after a backward-forward resolution of \eqref{HJ eq} in the time interval $[0,T]$, actually coincides with the unique viscosity solution to the degenerate elliptic obstacle problem
$$
\min \left\{ \varphi - u_T, \ -\lambda_N \left[ D^2 \varphi - \dfrac{A^{-1}}{T} \right] \right\} = 0,
$$
where, for an $N\times N$ symmetric matrix $X$,  we denote by $\lambda_N [X]$ the greatest eigenvalue of $X$. 

In the present work,  for a given target $u_T\in \Lip(\R^N)$ with compact support and a $p$-convex Hamiltonian $H(x,p)$ which may depend also on the space-variable $x$, instead of approximating $u_T$ by the backward-forward method,  we consider the problem of finding a reachable function $u_T^\ast\in \mathcal{R}_T$ that minimizes the $L^2$-distance to $u_T$. In Figure \ref{Fig:L2 projection}, we can see an illustration of the semiconcave envelope of an unreachable target function $u_T$, compared with the $L^2$-projection of $u_T$ onto $\mathcal{R}_T$.
Observe that, since the semiconcave envelope always lies above the target, the projection obtained by the operator $S_T^+\circ S_T^-$ is different to the $L^2$-projection of $u_T$ onto $\mathcal{R}_T$, unless the target $u_T$ is reachable.

\begin{center}
\begin{figure}
\includegraphics[scale=.18]{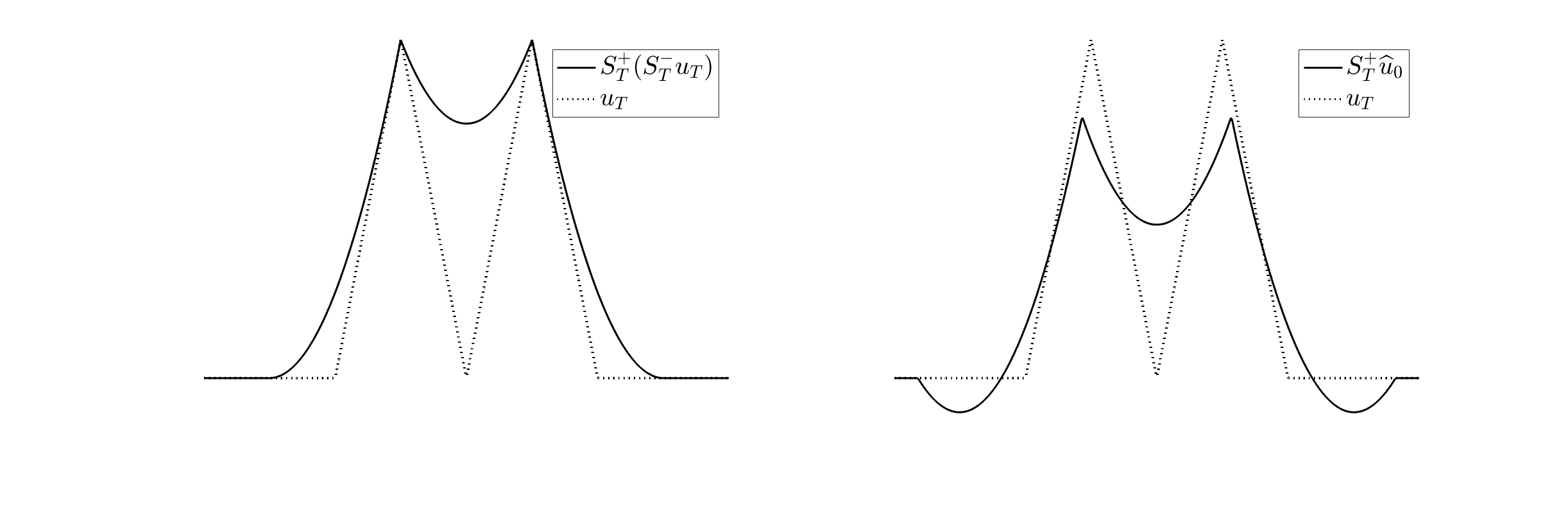}
\caption{At the left we see the semiconcave envelope of a function $u_T\in\Lip(\R)$, obtained by a backward-forward resolution of \eqref{HJ eq}, i.e. the smallest reachable function bounded from below by $u_T$.
At the right we see the $L^2$-projection of $u_T$ onto the reachable set $\mathcal{R}_T$, i.e. the reachable function which minimizes the $L^2$-distance to $u_T$.}
\label{Fig:L2 projection}
\end{figure}
\end{center}

The rest of the paper is structured as follows.
In Section \ref{sec:differentiability}, we present and prove the main results concerning the differentiability of the forward viscosity operator $S_T^+$ with respect to the initial condition. The proof of Theorem \ref{thm: differentiability general weak convergence} is postponed to the subsection \ref{subsec:proof differentiability}.
In subsection \ref{subsec:opt condition},  we compute the gradient of the functional $\mathcal{J}_T$ and derive a first-order optimality condition for the optimal control problem \eqref{L2 OCP}.
In section \ref{sec: linearization},  for the one-dimensional case, and for quadratic Hamiltonians in any space dimension, we identify the directional Gâteaux derivatives obtained in Section \ref{sec:differentiability}, with duality solutions to the transport equation \eqref{eq: transport eq intro Ham}. The main result of this section is stated in subsection \ref{subsec:gateaux deriv duality sol}, and its proof is postponed to subsection \ref{subsec: proof linearization}. In subsection \ref{subsec: opt cond dual}, we represent the gradient of the functional $\mathcal{J}_T$, defined in \eqref{L2 OCP}, by means of the unique backward solution at time $t=0$ to the dual equation to \eqref{eq: transport eq intro Ham}, which is the backward conservative equation \eqref{backward conservative intro}.
In subsection \ref{subsec: duality solutions}, we recall the main definitions and properties of duality solutions, as presented in \cite{bouchut2005uniqueness}. In subsection \ref{subsec:semiconcavity estimates}, we give a proof of a semiconcavity estimate, necessary in the proof of Theorem \ref{thm:gateaux deriv transport}.
Finally, in Section \ref{sec:existence minimizers} we give the proof of the existence of minimizers for the optimal control problem \ref{L2 OCP}. In addition, for the case when the Hamiltonian is $x-$independent and quadratic, we prove that the $L^2$-projection of  any $u_T\in \Lip(\R^N)$ onto the space of reachable targets $\mathcal{R}_T$ is unique.

\section{Differentiability with respect to the initial condition}
\label{sec:differentiability}

The goal in this section is to prove that the forward viscosity operator $S_T^+$ defined in \eqref{forward viscosity operator intro} is differentiable with respect to the $L^1_{loc}$-convergence. Then, we use this differentiability result to compute the gradient of the functional $\mathcal{J}_T$ defined in \eqref{L2 OCP}, which allows us to derive a first-order optimality condition for the optimization problem \eqref{L2 OCP}.

It is well known (see for instance \cite[Chapter 10]{evans1998partial} or \cite[Section I.10]{fleming2006controlled}) that, under the hypotheses \eqref{general Hcond},\eqref{H bounded}, \eqref{H lipschitz in x} and \eqref{H lipschitz in p},  for any initial condition $u_0\in \Lip(\R^N)$, the unique viscosity solution to \eqref{HJ eq} can be given by the value function of an associated problem in the calculus of variations as follows:
\begin{equation}\label{viscosity solution value function}
u(t,x) = 
\inf_{\substack{\xi\in W^{1,1}((0,t);\R^N) \\
\xi(t) = x}}
\left\{
\int_0^t H^\ast (\xi(s),\, \dot{\xi}(s)) ds + u_0 (\xi(0)) \right\}, \qquad \forall (t,x) \in [0,T]\times\R^N
\end{equation}
where $H^\ast: \R^N\times \R^N\to \R$ denotes the Legendre-Fenchel transform of $H(x,\cdot)$, i.e.
$$
H^\ast (x,q) = \max_{p\in \R^N} \left\{ p\cdot q - H(x,p)  \right\} \qquad \forall x,q\in \R^N .
$$
In view of the representation formula \eqref{viscosity solution value function}, for any $t\in (0,T]$, the \emph{forward viscosity operator} $S_t^+$ introduced in \eqref{forward viscosity operator intro} can be
given by the expression
\begin{equation}\label{value function general H}
S_t^+ u_0(x) := 
\inf_{\substack{\xi\in W^{1,1}((0,t);\R^N) \\
\xi(t) = x}}
\left\{
\int_0^t H^\ast (\xi(s),\, \dot{\xi}(s)) ds + u_0 (\xi(0)) \right\}, \qquad \forall x\in \R^N.
\end{equation}
One can readily prove that the operator $S_t^+$ satisfies the semigroup property
\begin{equation*}
(S_{t_1}^+ \circ S_{t_2}^+) u_0 = S_{t_1+t_2}^+ u_0 , \qquad \forall t_1, \, t_2 >0.
\end{equation*}

\subsection{Differentiability of the forward viscosity semigroup}
\label{subsec:diff results}

The main result of this section is the following theorem, which states that the operator $S_t^+$ is G\^ateaux differentiable at any $u_0\in \Lip(\R^N)$, in any direction $w\in \Lip(\R^N)$.
The proof of this theorem is postponed to subsection \ref{subsec:proof differentiability}.

\begin{theorem}\label{thm: differentiability general weak convergence}
Let $T>0$,  $u_0\in \Lip(\R^N)$,  and let $H$ be a Hamiltonian satisfying \eqref{general Hcond},\eqref{H bounded}, \eqref{H lipschitz in x} and \eqref{H lipschitz in p}.
Let $S_t^+$ be the forward viscosity operator defined in \eqref{value function general H}.
Then, for any $w\in \Lip(\R^N)$ and $t\in (0,T]$ we have
$$
\dfrac{S_t^+ (u_0 + \delta w) (\cdot) - S_t^+ u_0 (\cdot) }{\delta} \longrightarrow \partial_w S_t^+ u_0 (\cdot) \quad \text{as} \quad \delta\to 0^+, \quad \text{in} \quad L^1_{loc} (\R^N),
$$
where $\partial_w S_t^+ u_0 (\cdot) \in L^\infty_{loc} (\R^N)$ is defined almost everywhere in $\R^N$ in the following way:

At any point $x\in \R^N$ where $S_t^+ u_0(\cdot)$ is differentiable, we have
\begin{equation}\label{Gateaux derivative thm}
\partial_w S_t^+ u_0 (x) =  w (\xi_{t,x} (0)),
\end{equation}
with $(\xi_{t,x}(\cdot) , p_{t,x} (\cdot)) \in C^1 ([0,t]; \R^N)$ being the unique solution to the backward system of ODEs
\begin{equation}\label{optimality system thm}
\left\{\begin{array}{ll}
\dot{\xi}_{t,x} (s)  = H_p (\xi_{t,x} (s), p_{t,x} (s)) & s\in (0,t) \\
\noalign{\vspace{2pt}}
\dot{p}_{t,x} (s) = -H_x  (\xi_{t,x} (s), p_{t,x} (s)) & s\in (0,t) \\
\noalign{\vspace{2pt}}
\xi_{t,x} (t) = x \quad p_{t,x} (t) = \nabla S_t^+ u_0 (x).
\end{array}\right.
\end{equation}

(Note that,  since $S_t^+u_0(\cdot)\in \Lip(\R^N)$, by means of Rademacher's Theorem we have that $S_t^+u_0 (\cdot)$ is differentiable almost everywhere in $\R^N$.
Hence, the expression \eqref{Gateaux derivative thm} uniquely determines $\partial_w S_t^+ u_0 (\cdot)$ as an element of $L^\infty_{loc} (\R^N)$).

\end{theorem}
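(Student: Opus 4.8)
The plan is to treat this as an envelope-type (Danskin) argument for the family of variational problems defining $S_t^+(u_0+\delta w)$, combined with a stability result for the minimizing trajectories. Throughout I fix $t\in(0,T]$ and a point $x\in\R^N$ at which $S_t^+u_0$ is differentiable; at such a point the classical theory of the value function \eqref{value function general H} guarantees that the infimum is attained at a \emph{unique} minimizer $\xi_{t,x}$, whose adjoint momentum satisfies the optimality system \eqref{optimality system thm} with terminal value $p_{t,x}(t)=\nabla S_t^+u_0(x)$, the latter coming from the Legendre duality between the Euler--Lagrange equations for $H^\ast$ and the Hamiltonian flow. The two-sided estimate below will show that the difference quotient converges to $w(\xi_{t,x}(0))$ at every such $x$, hence almost everywhere by Rademacher's theorem, and a domination argument will promote this to $L^1_{loc}$ convergence.

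For the \textbf{upper bound} I would use the unperturbed optimal curve $\xi_{t,x}$ as a competitor in the perturbed problem: since it is admissible for $S_t^+(u_0+\delta w)(x)$,
\begin{equation*}
S_t^+(u_0+\delta w)(x)\le \int_0^t H^\ast(\xi_{t,x},\dot\xi_{t,x})\,ds+u_0(\xi_{t,x}(0))+\delta\,w(\xi_{t,x}(0))=S_t^+u_0(x)+\delta\,w(\xi_{t,x}(0)),
\end{equation*}
so the difference quotient is $\le w(\xi_{t,x}(0))$ for every $\delta>0$. Symmetrically, for the \textbf{lower bound} I let $\xi^\delta$ be a minimizer for $S_t^+(u_0+\delta w)(x)$; using $\xi^\delta$ as a competitor for the unperturbed problem gives $S_t^+u_0(x)\le\int_0^tH^\ast(\xi^\delta,\dot\xi^\delta)\,ds+u_0(\xi^\delta(0))$, whence after subtraction the difference quotient is $\ge w(\xi^\delta(0))$.

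The crux is then the \emph{convergence of minimizers}, $\xi^\delta(0)\to\xi_{t,x}(0)$ as $\delta\to0^+$, which I expect to be the main obstacle. Here I would argue by compactness: the uniform convexity and superlinearity of $H^\ast$ (consequences of \eqref{general Hcond}) together with the Lipschitz bound on $u_0$ force the near-minimizers $\{\xi^\delta\}$ to have uniformly controlled velocities, hence to be precompact for uniform convergence on $[0,t]$, with endpoints $\xi^\delta(0)$ confined to a fixed bounded set by finite-speed estimates stemming from \eqref{H lipschitz in x}--\eqref{H lipschitz in p}. Any cluster curve is, by lower semicontinuity of the action, continuity of $u_0$, and the convergence $S_t^+(u_0+\delta w)(x)\to S_t^+u_0(x)$, a minimizer of the unperturbed problem; since $x$ is a differentiability point, that minimizer is unique and equals $\xi_{t,x}$, so the whole family converges and $\xi^\delta(0)\to\xi_{t,x}(0)$. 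Combining with continuity of $w$, the two bounds pinch to yield $\partial_w S_t^+u_0(x)=w(\xi_{t,x}(0))$ pointwise at every differentiability point.

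Finally, to obtain $L^1_{loc}$ convergence I would invoke dominated convergence on an arbitrary ball $B_R$: the pointwise limit holds almost everywhere by Rademacher, and the difference quotients are uniformly bounded there because the starting points $\xi_{t,x}(0)$ range over a fixed bounded set as $x$ varies in $B_R$ (again by the propagation estimates), so both $|w(\xi_{t,x}(0))|$ and the quotients are dominated by a constant $C(R,t,u_0,H)$ independent of $\delta$. It then remains only to identify, through the Legendre transform, the variational minimizer $\xi_{t,x}$ with the $\xi$-component of the solution to the Hamiltonian system \eqref{optimality system thm}, which is the standard equivalence between Euler--Lagrange extremals for $H^\ast$ and the bicharacteristics of $H$.
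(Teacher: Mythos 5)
Your proposal is correct and follows essentially the same route as the paper: the upper bound by inserting the unperturbed minimizer as a competitor, the lower bound via compactness of the perturbed minimizers together with uniqueness of the minimizer at differentiability points (the paper phrases this step as a proof by contradiction, while you argue directly that every cluster curve must be the unique unperturbed minimizer by lower semicontinuity of the action, but the ingredients are identical), and finally dominated convergence on compact sets using the uniform bound $\|w\|_{L^\infty(K')}$ on the difference quotients. The only point worth making explicit is the justification that $S_t^+(u_0+\delta w)(x)\to S_t^+u_0(x)$, which follows from the same two competitor inequalities you already wrote down.
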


The function $ \partial_w S_t^+ u_0(\cdot) \in L^\infty_{loc}(\R^N)$ defined a.e. as in \eqref{Gateaux derivative thm} can be interpreted as the G\^ateaux derivative of the operator $S_t^+$ at $u_0$ in the direction $w$. 
Now, for any $u_0\in \Lip(\R^N)$ and $t\in (0,T]$ fixed, let us define the function $\Phi_{u_0}^t (\cdot)\in L_{loc}^\infty (\R^N;\R^N)$ as
\begin{equation}\label{Phi def}
\Phi_{u_0}^t (x):= \xi_{t,x} (0), \qquad \text{for a.e.} \ x\in\R^N,
\end{equation}
where $\xi_{t,x} (0)$ is the unique solution to \eqref{optimality system thm}. Then, the operator
$$
\begin{array}{ccclc}
DS_t^+(u_0): & \Lip(\R^N) & \longrightarrow & L^\infty_{loc} (\R^N) \\
\noalign{\vspace{2pt}}
& w(\cdot) & \longmapsto & \partial_w S_t^+ u_0 (\cdot) := w(\Phi_{u_0}^t(\cdot)) & \text{a.e.  in} \ \R^N
\end{array}
$$
is linear and continuous  in its domain, and can be seen as the gradient of the operator $S_t^+$ at $u_0$.
This proves that for any $t\in (0,T]$, the operator $S_t^+$ defined in \eqref{value function general H} is differentiable with respect to the $L^1_{loc}$-convergence.

\begin{remark}[Lipschitz continuity]\label{rmk: S_T^+ Lipschitz}
Note moreover that, for any $u_0\in\Lip(\R^N)$, the norm of the linear operator $DS_t^+(u_0)$ 
above defined, restricted to $\Lip(\R^N)\cap L^\infty(\R^N)$, is equal to $1$. This implies that, for any $t\in (0,T]$, the forward viscosity operator $S_t^+$, restricted to $\Lip(\R^N)\cap L^\infty(\R^N)$, is Lipschitz with constant $1$, i.e.
$$
\| S_t^+ u_0 - S_t^+ u_1 \|_{L^\infty(\R^N)} \leq \| u_0 - u_1 \|_{L^\infty (\R^N)} \quad \text{for all} \quad u_0,u_1\in \Lip(\R^N)\cap L^\infty(\R^N).
$$
This inequality is also proved  in \eqref{lebesgue 2} as part of the proof of Theorem \ref{thm: differentiability general weak convergence}.
\end{remark}

\subsection{First-order optimality condition for inverse designs}
\label{subsec:opt condition}

In this subsection, for a given time horizon $T>0$ and a target function $u_T\in \Lip_0(\R^N)$, we consider the optimization problem \eqref{L2 OCP}, presented in the introduction, of constructing an initial condition for which the corresponding viscosity solution to \eqref{HJ eq} at time $T$ minimizes the $L^2$-distance to $u_T$.
Using the forward viscosity operator $S_T^+$ introduced in \eqref{forward viscosity operator intro}, and given explicitly by the expression \eqref{value function general H}, the problem can be formulated as
\begin{equation}\label{L2 OCP S^+}
\underset{u_0\in \Lip_0(\R^N)}{\text{minimize}} \  \mathcal{J}_T (u_0) : =  \| S_T^+ u_0 (\cdot) - u_T(\cdot) \|_{L^2 (\R^N)}^2 .
\end{equation}

\begin{remark}[Assumptions on the Hamiltonian]
\label{rmk: C=0}
We address the optimization problem \eqref{L2 OCP S^+} under the assumption that the Hamiltonian $H$ satisfies the hypotheses \eqref{general Hcond}, \eqref{H lipschitz in x}, \eqref{H lipschitz in p}, and \eqref{H bounded} with $C_0=0$. This choice guarantees that the operator $S_T^+$ defined in \eqref{value function general H} satisfies
\begin{equation}\label{lip_0 to L2}
S_T^+   u_0 \in \Lip_0(\R^N) \qquad \forall u_0\in \Lip_0(\R^N).
\end{equation}
Indeed, it is easy to check that condition \eqref{H bounded} with $C_0=0$ implies the same property on the Legendre-Fenchel transform $H^\ast$, i.e.
$$
H^\ast (x,q) \geq 0 \quad \forall x,q\in \R^N \quad \text{and} \quad
H^\ast (x,0) = 0 \quad  \forall x\in \R^N.
$$
This property implies in particular that $S_T^+ 0 = 0$,
and then, by using the compactness estimates from \cite[Corollary 1]{ancona2016quantitative} (see also \cite{ancona2016compactness}), we deduce that, if $u_0 \in \Lip_0 (\R^N)$, there exists a constant $l>0$ such that $\operatorname{supp}(S_T^+u_0) \subset [-l,l]^N$. Finally, since $\Lip_0 (\R^N)\subset L^2 (\R^N)$, property \eqref{lip_0 to L2} follows.
Having property \eqref{lip_0 to L2} in hand, we deduce that the set of admissible initial conditions for the optimization problem \eqref{L2 OCP S^+}, i.e. $u_0\in \Lip(\R^N)$ satisfying $\mathcal{J}_T (u_0)< \infty$, is nonempty.
\end{remark}

\begin{remark}[Existence of minimizers]
Note that, due to the lack of backward uniqueness of the Hamilton-Jacobi equation \eqref{HJ eq}, the functional $\mathcal{J}_T(\cdot)$ is not expected to be coercive. Indeed, in view of the results in \cite{colombo2019initial,esteve2020inverse}, the preimage by $S_T^+$ of any reachable function $\varphi_T\in \mathcal{R}_T$ consists of a convex cone in $\Lip(\R^N)$,  which might be unbounded in $L^2(\R^N)$.
The existence of a minimizer for the problem \eqref{L2 OCP S^+} can however be justified by using the backward viscosity operator $S_T^-$ defined in \eqref{value function backward general H}. 
We shall prove, in Theorem \ref{thm:existence L2 projection} in Section \ref{sec:existence minimizers}, the existence of at least one minimizer for the optimization problem \eqref{L2 OCP S^+}.
\end{remark}

In the following theorem,  we derive a first-order optimality condition for the problem \eqref{L2 OCP S^+}. 
This is done by using the differentiability result from Theorem \ref{thm: differentiability general weak convergence}, which allows one to compute the directional derivatives of the functional
$$
\begin{array}{cccc}
\mathcal{J}_T: & \Lip_0(\R^N) & \longrightarrow & \R \\
\noalign{\vspace{2pt}}
 &  u_0 & \longmapsto & \| S_T^+ u_0 (\cdot) - u_T(\cdot)\|_{L^2(\R^N)}^2.
\end{array}
$$

\begin{theorem}\label{thm:optimality condition}
Let $T>0$ and let $H$ be a Hamiltonian satisfying \eqref{general Hcond},  \eqref{H lipschitz in x}, \eqref{H lipschitz in p}, and  \eqref{H bounded} with $C_0=0$.
Let $S_T^+$ be the forward viscosity operator defined in \eqref{value function general H},
and let $u_T\in \Lip_0(\R^N)$ be a given function with compact support.
Then,  any $u_0\in \Lip_0(\R^N)$ solution to the optimization problem \eqref{L2 OCP S^+}  satisfies
\begin{equation}\label{1st order opt cond}
\displaystyle\int_{\R^N} (S_T^+ u_0 (x) - u_T(x)) \,  w(\Phi_{u_0}^T(x))dx = 0 \quad \forall w\in \Lip(\R^N).
\end{equation}

This condition can also be expressed, in terms of the gradient of the functional $\mathcal{J}_T$,  as 
$$D \mathcal{J}_T (u_0) = 0,$$
where $D\mathcal{J}_T$ is the continuous linear functional
\begin{equation}\label{gradient J_T}
\begin{array}{ccccl}
D\mathcal{J}_T(u_0) : & \hspace{-5pt}  \Lip(\R^N) & \rightarrow & \R \\
\noalign{\vspace{2pt}}
& w(\cdot ) & \mapsto & \partial_w \mathcal{J}_T (u_0) & \hspace{-8pt} : = 2 \displaystyle\int_{\R^N} (S_T^+ u_0 (x) - u_T(x)) \,  w(\Phi_{u_0}^T(x))dx
\end{array}
\end{equation}
with $\Phi_{u_0}^T:\R^N\to \R^N$ being the map defined a.e. as in \eqref{Phi def}.
Note that, by the density of Lipschitz functions in $C_c^0(\R^N)$, we can extend the functional $D\mathcal{J}_T (u_0)$ to $C_c^0(\R^N)$, and we then deduce that $D\mathcal{J}_T (u_0)$  defines a Radon measure in $\R^N$.
\end{theorem}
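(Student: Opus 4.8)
The plan is to derive the first-order optimality condition directly from the differentiability result of Theorem \ref{thm: differentiability general weak convergence}, using the classical principle that at a minimizer the one-sided directional derivatives of the functional must be nonnegative in every admissible direction, together with the linearity of the G\^ateaux derivative established there. First I would compute the directional derivative of $\mathcal{J}_T$ at $u_0$ in a direction $w\in\Lip(\R^N)$. Writing $\mathcal{J}_T(u_0)=\|S_T^+u_0-u_T\|_{L^2}^2$ and using the definition of the derivative,
\begin{equation*}
\partial_w\mathcal{J}_T(u_0)=\lim_{\delta\to0^+}\frac{\|S_T^+(u_0+\delta w)-u_T\|_{L^2}^2-\|S_T^+u_0-u_T\|_{L^2}^2}{\delta}.
\end{equation*}
Expanding the squared norms and factoring, the difference quotient decomposes into a cross term of the form $2\int_{\R^N}(S_T^+u_0-u_T)\,\tfrac{S_T^+(u_0+\delta w)-S_T^+u_0}{\delta}\,dx$ plus a term that is $O(\delta)$ in $L^2$ and hence vanishes in the limit.

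The key step is to pass to the limit in this cross term using Theorem \ref{thm: differentiability general weak convergence}, which gives $\tfrac{S_T^+(u_0+\delta w)-S_T^+u_0}{\delta}\to w(\Phi_{u_0}^T(\cdot))$ in $L^1_{loc}(\R^N)$. Since $S_T^+u_0-u_T\in\Lip_0(\R^N)$ is compactly supported (by \eqref{lip_0 to L2} and the hypothesis on $u_T$) and bounded, and the difference quotients are uniformly bounded in $L^\infty_{loc}$ (as noted in Remark \ref{rmk: S_T^+ Lipschitz}, where the norm-$1$ bound on $DS_T^+(u_0)$ and the underlying Lipschitz estimate \eqref{lebesgue 2} control them on the relevant compact set), I can multiply by the bounded compactly supported factor $S_T^+u_0-u_T$ and pass the $L^1_{loc}$-convergence through the integral by dominated convergence. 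This yields precisely
\begin{equation*}
\partial_w\mathcal{J}_T(u_0)=2\int_{\R^N}(S_T^+u_0(x)-u_T(x))\,w(\Phi_{u_0}^T(x))\,dx,
\end{equation*}
which is the formula \eqref{gradient J_T} defining $D\mathcal{J}_T(u_0)$, and the same computation performed in the direction $-w$ shows $\partial_{-w}\mathcal{J}_T(u_0)=-\partial_w\mathcal{J}_T(u_0)$, so the derivative is genuinely linear in $w$.

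With the derivative formula in hand, the optimality condition follows from a standard variational argument: if $u_0$ minimizes $\mathcal{J}_T$ over $\Lip_0(\R^N)$, then for every $w\in\Lip_0(\R^N)$ and every $\delta>0$ the competitor $u_0+\delta w$ is admissible, whence $\mathcal{J}_T(u_0+\delta w)\ge\mathcal{J}_T(u_0)$, giving $\partial_w\mathcal{J}_T(u_0)\ge0$; applying this to both $w$ and $-w$ and using linearity forces $\partial_w\mathcal{J}_T(u_0)=0$, which is \eqref{1st order opt cond}. To extend \eqref{1st order opt cond} to all test directions $w\in\Lip(\R^N)$ rather than only compactly supported ones, I observe that since $S_T^+u_0-u_T$ vanishes outside a compact set and $\Phi_{u_0}^T$ maps this set into a bounded region, only the values of $w$ on a compact set enter the integral, so the identity for compactly supported directions propagates to all of $\Lip(\R^N)$. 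Finally, to see that $D\mathcal{J}_T(u_0)$ defines a Radon measure, I note that $w\mapsto\int(S_T^+u_0-u_T)\,w(\Phi_{u_0}^T)\,dx$ is a linear functional bounded by $C\|w\|_{L^\infty}$ on a fixed compact set (again using the $L^\infty$-bound on $S_T^+u_0-u_T$ and its compact support); by the density of Lipschitz functions in $C_c^0(\R^N)$ it extends to a bounded linear functional on $C_c^0(\R^N)$, and the Riesz representation theorem identifies it with a Radon measure. The main obstacle I anticipate is the rigorous passage to the limit in the cross term: the convergence from Theorem \ref{thm: differentiability general weak convergence} is only in $L^1_{loc}$, so I must lean carefully on the compact support of $S_T^+u_0-u_T$ and on a uniform $L^\infty_{loc}$ bound for the difference quotients to justify interchanging limit and integral, rather than treating this as automatic.
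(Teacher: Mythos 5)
Your proposal is correct and follows essentially the same route as the paper's proof: compute $\partial_w\mathcal{J}_T(u_0)$ by passing to the limit in the cross term via Theorem \ref{thm: differentiability general weak convergence}, using the compact support of $S_T^+u_0-u_T$ to justify the interchange of limit and integral, then deduce \eqref{1st order opt cond} from the sign condition at a minimizer and identify $D\mathcal{J}_T(u_0)$ with a Radon measure by density of $\Lip(\R^N)$ in $C_c^0(\R^N)$. The extra care you take with the quadratic remainder $\|S_T^+(u_0+\delta w)-S_T^+u_0\|_{L^2}^2/\delta$ and with extending the test directions from $\Lip_0(\R^N)$ to $\Lip(\R^N)$ is sound and only makes explicit what the paper leaves implicit.
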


\begin{proof}
Using the conclusions of Theorem \ref{thm: differentiability general weak convergence}, and the fact that $u_0,u_T\in \Lip_0(\R^N)$ are compactly supported, we can compute
\begin{eqnarray*}
\lim_{\delta\to 0^+} \dfrac{\mathcal{J}_T(u_0 + \delta w)- \mathcal{J}_T (u_0)}{\delta} &=& \lim_{\delta \to 0^+} 2\displaystyle\int_{\R^N} (S_T^+ u_0 (x) - u_T(x)) \dfrac{S_T^+(u_0 + \delta w)(x) - S_T^+ u_0(x)}{\delta} dx \\
 &=& 2\displaystyle\int_{\R^N} (S_T^+ u_0 (x) - u_T(x)) \partial_w S_T^+ u_0(x) dx,
\end{eqnarray*}
where $\partial_w S_T^+ u_0\in L^\infty_{loc} (\R^N)$ is given by \eqref{Gateaux derivative thm}. 
Hence, using the map $\Phi_{u_0}^T$ defined a.e. in \eqref{Phi def}, we can write the directional derivative of $\mathcal{J}_T$ at $u_0$ in the direction $w$ as
$$
\partial_w \mathcal{J}_T (u_0) = 2 \int_{\R^N} (S_T^+ u_0 (x) - u_T(x)) \,  w(\Phi_{u_0}^T(x))dx.
$$
This gives the first-order optimality condition \eqref{1st order opt cond}.

Finally, note that the functional $D \mathcal{J}_T$ defined in \eqref{gradient J_T} is clearly linear and continuous with respect to the sup-norm in $C_c^0(\R^N)$.   Then, by the density of $\Lip(\R^N)$ in the space of continuous functions with compact support, we can identify $D\mathcal{J}_T(u_0)$ with a Radon measure in $\R^N$.
\end{proof}

\subsection{Proof of Theorem \ref{thm: differentiability general weak convergence}}
\label{subsec:proof differentiability}

Let us give the proof of Theorem \ref{thm: differentiability general weak convergence}.
As a first step we provide, in Proposition \ref{prop: differentiability general}, an explicit expression for the limit
$$
 \lim_{\delta \to 0^+} \dfrac{S_t^+ (u_0 + \delta w)(x) - S_t^+ u_0(x)}{\delta}.
$$
at the points $x\in \R^N$ where $S_t^+ u_0$ is differentiable.
As it is well-known, the viscosity solution to \eqref{HJ eq} is Lipschitz continuous, and then, by Rademacher's Theorem,  $S_t^+u_0$ is differentiable for a.e.  $x\in\R^N$.
Therefore, Proposition \ref{prop: differentiability general} is sufficient to explicitly identify a unique candidate in  $L^\infty_{loc} (\R^N)$ for the directional G\^ateaux derivative $\partial_w S_t^+ u_0(\cdot)$.

\begin{proposition}\label{prop: differentiability general}
Let $T>0$, $u_0\in \Lip(\R^N)$ and $H$ be a Hamiltonian satisfying \eqref{general Hcond},\eqref{H bounded}, \eqref{H lipschitz in x} and \eqref{H lipschitz in p}.
For any $t\in (0,T]$, let $S_t^+$ be the forward viscosity operator defined in \eqref{value function general H}.
For any $x\in\R^N$ such that $S_t^+u_0(\cdot)$ is differentiable at $x$,
let $\xi_{t,x} \in C^1([0,t]; \R^N)$,  together with $p_{t,x} \in C^1([0,t]; \R^N)$, be the unique solution to the terminal value problem
\begin{equation}\label{optimality system}
\left\{\begin{array}{ll}
\dot{\xi}_{t,x} (s)  = H_p (\xi_{t,x} (s), p_{t,x}(s)) & s\in (0,t) \\
\noalign{\vspace{2pt}}
\dot{p}_{t,x} (s) = -H_x  (\xi_{t,x} (s), p_{t,x}(s)) & s\in (0,t) \\
\noalign{\vspace{2pt}}
\xi_{t,x} (t) = x \quad p_{t,x}(t) = \nabla S_t^+ u_0 (x).
\end{array}\right.
\end{equation}
Then,  for any $w\in \Lip (\R^N)$, we have
$$
 \lim_{\delta \to 0^+} \dfrac{S_t^+ (u_0 + \delta w)(x) - S_t^+ u_0(x)}{\delta} = w (\xi_{t,x}(0)).
$$
\end{proposition}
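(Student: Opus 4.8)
The plan is to prove separately the two inequalities
\[
\limsup_{\delta\to 0^+} \frac{S_t^+(u_0+\delta w)(x)-S_t^+ u_0(x)}{\delta} \leq w(\xi_{t,x}(0)) \leq \liminf_{\delta\to 0^+} \frac{S_t^+(u_0+\delta w)(x)-S_t^+ u_0(x)}{\delta},
\]
which together give the claim. Throughout, fix a point $x$ at which $u:=S_t^+ u_0$ is differentiable and write $u^\delta:=S_t^+(u_0+\delta w)$. I would first recall, from the classical theory of the calculus of variations (see \cite{evans1998partial,fleming2006controlled,cannarsa1997regularity}), that differentiability of $u$ at $x$ forces the infimum in \eqref{value function general H} to be attained at a \emph{unique} minimizing curve, which is precisely the projection $\xi_{t,x}$ of the solution of the Hamiltonian system \eqref{optimality system} with terminal costate $p_{t,x}(t)=\nabla S_t^+ u_0(x)$. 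This uniqueness is the structural fact on which the whole argument hinges.

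The upper bound is immediate. I would insert the optimal curve $\xi_{t,x}$ for $u(x)$ as an admissible competitor in the variational problem defining $u^\delta(x)$; since the running cost $\int_0^t H^\ast(\xi_{t,x},\dot{\xi}_{t,x})\,ds$ is common to both functionals and only the endpoint term $u_0(\cdot)$ is replaced by $(u_0+\delta w)(\cdot)$, this gives
\[
u^\delta(x) \leq \int_0^t H^\ast(\xi_{t,x},\dot{\xi}_{t,x})\,ds + (u_0+\delta w)(\xi_{t,x}(0)) = u(x) + \delta\, w(\xi_{t,x}(0)),
\]
whence $\limsup_{\delta\to 0^+}\delta^{-1}(u^\delta(x)-u(x))\leq w(\xi_{t,x}(0))$. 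For the lower bound I would symmetrically take a minimizer $\xi^\delta$ of the functional defining $u^\delta(x)$ and use it as a competitor for $u(x)$; the same cancellation of the running cost yields $u^\delta(x)-u(x)\geq \delta\, w(\xi^\delta(0))$, that is, $\delta^{-1}(u^\delta(x)-u(x))\geq w(\xi^\delta(0))$. (These two competitor estimates, localized to a compact set containing the relevant endpoints, also show directly that $u^\delta(x)\to u(x)$ as $\delta\to 0^+$.)

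The heart of the proof, and the step I expect to be the main obstacle, is to pass to the limit in $w(\xi^\delta(0))$, i.e. to establish the stability $\xi^\delta(0)\to \xi_{t,x}(0)$. I would argue by compactness. First, uniform-in-$\delta$ (for $\delta\leq 1$) bounds on the minimizers $\xi^\delta$ follow from the a priori Lipschitz estimates for minimizers available under the uniform convexity \eqref{general Hcond} and the growth hypotheses \eqref{H lipschitz in x}--\eqref{H lipschitz in p}, together with the uniform control $\Lip(u_0+\delta w)\leq \Lip(u_0)+\Lip(w)$. By Arzel\`a--Ascoli, any sequence $\delta_k\to 0^+$ admits a subsequence along which $\xi^{\delta_k}\to\bar\xi$ uniformly and $\dot{\xi}^{\delta_k}\rightharpoonup\dot{\bar\xi}$ weakly. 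Using the convexity of $q\mapsto H^\ast(x,q)$, the running-cost functional is weakly lower semicontinuous (Tonelli), and combining this with $u^{\delta_k}(x)\to u(x)$ and $\delta_k w(\xi^{\delta_k}(0))\to 0$ shows that $\bar\xi$ attains the infimum defining $u(x)$. By the uniqueness of the minimizer at the point of differentiability $x$, necessarily $\bar\xi=\xi_{t,x}$; since the limit is independent of the chosen subsequence, the whole family converges and, in particular, $\xi^\delta(0)\to\xi_{t,x}(0)$. Continuity of $w$ then upgrades the lower-bound estimate to $\liminf_{\delta\to 0^+}\delta^{-1}(u^\delta(x)-u(x))\geq w(\xi_{t,x}(0))$, completing the argument. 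The technical core is thus the combination of the uniform a priori bounds on the $\xi^\delta$ with the lower-semicontinuity identification of the limit, the decisive leverage being that differentiability of $S_t^+u_0$ at $x$ pins down a single optimal trajectory.
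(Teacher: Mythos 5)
Your proof is correct and follows essentially the same route as the paper's: the identical upper bound obtained by inserting the optimal curve for $S_t^+u_0$ as a competitor in the problem for $S_t^+(u_0+\delta w)$, and a lower bound resting on the same three ingredients --- uniform a priori bounds and Arzel\`a--Ascoli compactness for the minimizers $\xi^\delta$, weak lower semicontinuity of the action functional, and uniqueness of the minimizer at points of differentiability (Lemma \ref{lem:regular points}). The only difference is presentational: you conclude via the direct stability statement $\xi^\delta(0)\to\xi_{t,x}(0)$, whereas the paper runs the same ingredients through a proof by contradiction.
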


\begin{remark}
This result proves that, for any  $x\in \R^N$ fixed, the map
$$
u_0(\cdot)\in \Lip(\R^N) \longmapsto S_t^+ u_0 (x)\in \R
$$
is differentiable at any $u_0\in \Lip(\R^N)$ where $S_t^+ u_0(\cdot)$ is differentiable. However, it does not directly prove the differentiability of the operator $S_t^+$.
Indeed, Proposition \ref{prop: differentiability general}  proves that the limit
$$
\dfrac{S_t^+ (u_0 + \delta w)(\cdot) - S_t^+ u_0(\cdot)}{\delta} \to \partial_w S_t^+ u_0 (\cdot) \qquad \text{as} \ \delta \to 0^+.
$$
holds in the sense almost everywhere in $\R^N$,  and represents the first step of the proof of Theorem \ref{thm: differentiability general weak convergence}.
The second step will consist in justifying that the convergence actually holds in $L^1_{loc}(\R^N)$.
\end{remark}

In the proof of Proposition \ref{prop: differentiability general}, we use the following well-known result concerning the minimizers of the right-hand-side of  \eqref{value function general H} at the points where $S_t^+ u_0 (\cdot)$ is differentiable.

\begin{lemma}\label{lem:regular points}
Let $T>0$,  $u_0\in \Lip (\R^N)$ and $H$ be a Hamiltonian satisfying \eqref{general Hcond},\eqref{H bounded}, \eqref{H lipschitz in x} and \eqref{H lipschitz in p}.
For any $(t,x) \in (0,T]\times \R^N$ fixed,   let $\xi_{t,x}(\cdot)$ be any minimizing trajectory for the right-hand side of  \eqref{value function general H}, and let $p_{t,x}(\cdot)$ be its associated dual arc. 
Then, the following statements hold true:
\begin{enumerate}
\item For all $s\in (0,t)$, the function $S_s^+ u_0(\cdot)$ is differentiable at $\xi_{t,x}(s)$, and 
$$\nabla S_s^+ u_0 (\xi_{t,x}(s)) = p_{t,x}(s).$$
\item If  $S_t^+ u_0(\cdot)$ is differentiable at $x$, then the minimizer $\xi_{t,x}$ of the right-hand-side in \eqref{value function general H} is unique and is given by the unique solution to \eqref{optimality system}.
\end{enumerate}
\end{lemma}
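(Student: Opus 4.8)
The plan is to combine the classical first-order (Pontryagin--Tonelli) structure of the minimizers with the dynamic programming principle, and to pin down the gradient of the value function at interior points through a two-sided differential argument. As a preliminary, I would record the Tonelli theory available under \eqref{general Hcond}: since $H\in C^2$ with $H_{pp}\geq c_0 I_N>0$, the Lagrangian $H^\ast$ is $C^2$, strictly convex and superlinear in its velocity variable, so every minimizer $\xi_{t,x}$ of the right-hand side of \eqref{value function general H} is $C^1$ (indeed $C^2$), satisfies the Euler--Lagrange equation, and, setting $p_{t,x}(s):=H^\ast_q(\xi_{t,x}(s),\dot\xi_{t,x}(s))$ for the associated dual arc, the pair $(\xi_{t,x},p_{t,x})$ solves the Hamiltonian system in \eqref{optimality system}. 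I would also recall the regularizing effect induced by the convexity of $H$: for every $s>0$ the function $S_s^+u_0$ is locally semiconcave, so its superdifferential is everywhere nonempty.

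For part (i), fix $s\in(0,t)$ and set $y:=\xi_{t,x}(s)$. The dynamic programming principle shows that the restriction $\xi_{t,x}|_{[0,s]}$ is a minimizer for $S_s^+u_0(y)$, while $\xi_{t,x}|_{[s,t]}$ is optimal for the problem of joining $(s,y)$ to the fixed endpoint $(t,x)$. I would then establish two one-sided bounds. The \emph{super}differential inclusion $p_{t,x}(s)\in D^+S_s^+u_0(y)$ follows from a first-order variation of the endpoint of the optimal trajectory on $[0,s]$: comparing with curves ending at $y+h$ (keeping $\zeta(0)$ fixed) and integrating by parts in the Euler--Lagrange equation produces the boundary term $\langle p_{t,x}(s),h\rangle$, so that $S_s^+u_0(y+h)-S_s^+u_0(y)\leq \langle p_{t,x}(s),h\rangle+o(|h|)$. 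The \emph{sub}differential inclusion $p_{t,x}(s)\in D^-S_s^+u_0(y)$ is where the restriction $s<t$ is essential: writing $c(y')$ for the optimal running cost on $[s,t]$ from $(s,y')$ to $(t,x)$, the dynamic programming principle gives $S_s^+u_0(y')\geq S_t^+u_0(x)-c(y')$ with equality at $y'=y$, and the analogous endpoint variation yields $\nabla c(y)=-p_{t,x}(s)$, hence the matching lower bound with the same slope. Since both $D^+S_s^+u_0(y)$ and $D^-S_s^+u_0(y)$ are nonempty and contain $p_{t,x}(s)$, the function $S_s^+u_0$ is differentiable at $y$ with $\nabla S_s^+u_0(y)=p_{t,x}(s)$, which is the claimed identity.

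For part (ii), suppose $S_t^+u_0$ is differentiable at $x$. The endpoint-variation argument applied at the terminal time shows that the dual arc of \emph{any} minimizer satisfies $p_{t,x}(t)\in D^+S_t^+u_0(x)$; differentiability at $x$ forces this superdifferential to be the single vector $\nabla S_t^+u_0(x)$, so every minimizer carries the same terminal datum $p_{t,x}(t)=\nabla S_t^+u_0(x)$. Since $H\in C^2$, the Hamiltonian vector field in \eqref{optimality system} is $C^1$ and thus locally Lipschitz, so the terminal-value problem with Cauchy data $(\xi(t),p(t))=(x,\nabla S_t^+u_0(x))$ has a unique solution by the Picard--Lindel\"of theorem; consequently every minimizer coincides with it, which proves both uniqueness and the stated characterization.

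The main obstacle is the rigorous derivation of the two one-sided differential inclusions in part (i), and in particular the subdifferential bound, which requires constructing admissible comparison trajectories from a perturbed interior point to the fixed endpoint $x$ and controlling the first-order dependence of their optimal cost on the starting point. This is precisely where the uniform convexity and the $C^2$ regularity of $H^\ast$ enter, guaranteeing the smooth dependence of minimizers on endpoint data; everything else is bookkeeping on top of the cited semiconcavity and the classical regularity of Tonelli minimizers.
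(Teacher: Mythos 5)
Your argument is correct and is essentially the standard proof of the facts the paper invokes: the paper disposes of this lemma by citing Theorems 6.3.3, 6.4.8 and 6.4.10 of Cannarsa--Sinestrari, whose proofs follow exactly your two-sided (super-/subdifferential) endpoint-variation scheme along the optimal trajectory together with backward uniqueness for the Hamiltonian system \eqref{optimality system}. The only cosmetic point is that in the subdifferential step you need only $-p_{t,x}(s)\in D^+c(y)$, which is precisely what the one-sided perturbation of the fixed-endpoint arc on $[s,t]$ delivers, rather than full differentiability of $c$ at $y$; the inequality chain goes through unchanged with this weaker statement.
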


\begin{proof}
The result is well-known and is a direct consequence of the results in Chapter 6 in \cite{cannarsa2004semiconcave}.
Let us give a brief sketch of the proof for the reader's convenience.
The first statement follows directly from \cite[Theorem 6.4.8]{cannarsa2004semiconcave}.
For the second statement, sing \cite[Theorem 6.4.10]{cannarsa2004semiconcave},  the differentiability of $S_t^+ u_0 (\cdot)$ at $x$ implies that $(t,x)$ is a regular point in the sense of \cite[Definition 6.3.4]{cannarsa2004semiconcave}, i.e. the minimizer $\xi_x$ of the right-hand-side in \eqref{value function general H} is unique.
Then the proof can be concluded by using Theorems 6.3.3 and 6.4.8 in \cite{cannarsa2004semiconcave}.
\end{proof}

We can now proceed to the proof of Proposition \ref{prop: differentiability general}.

\begin{proof}[Proof of Proposition \ref{prop: differentiability general}]
Let $(t,x)\in (0,T]\times \R^N$ be such that $S_t^+u_0 (\cdot)$ is differentiable at $x$, and let $\xi_{t,x}(\cdot)\in C^1([0,t];\R^N)$ be the solution to \eqref{optimality system}.
We first note that, in view of \eqref{value function general H} and Lemma \ref{lem:regular points}, we have
$$
S_t^+ u_0 (x) = \int_0^t H^\ast (\xi_{t,x} (s), \dot{\xi}_{t,x} (s)) ds + u_0 (\xi_{t,x} (0))
$$
and
$$
S_t^+ (u_0 + \delta w) (x) \leq  \int_0^t H^\ast (\xi_{t,x} (s), \dot{\xi}_{t,x} (s)) ds + u_0 (\xi_{t,x} (0)) + \delta w (\xi_{t,x}(0)),
$$
which combined together imply
$$
\dfrac{S_t^+ (u_0 +\delta w) (x) - S_t^+ u_0 (x)}{\delta} \leq w(\xi_{t,x}(0)) \qquad \forall \delta>0.
$$
Hence, we have
\begin{equation}\label{limsup proof differentiability}
\limsup_{\delta \to 0^+} \dfrac{S_t^+ (u_0 +\delta w) (x) - S_t^+ u_0 (x)}{\delta} \leq w(\xi_{t,x}(0)).
\end{equation}

Let us now prove that
\begin{equation}\label{liminf proof differentiability}
\liminf_{\delta \to 0^+} \dfrac{S_t^+ (u_0 +\delta w) (x) - S_t^+ u_0 (x)}{\delta} \geq w(\xi_{t,x}(0)).
\end{equation}

We argue by contradiction. Let us suppose that there exists $\alpha>0$ and a sequence $\{ \delta_n\}_{n\in \N}$ such that $\delta_n \in (0,1)$,  $\delta_n\to 0$ and
\begin{equation}\label{contradiction hyp}
\dfrac{S_t^+ (u_0 + \delta_n w)(x) - S_t^+ u_0(x)}{\delta_n} \leq w(\xi_{t,x}(0)) - \alpha \qquad \forall n\in\N.
\end{equation}

For each $n\in\N$, let $\xi_n \in C^1 (0,t;\R^N)$ be such that
$$
S_t^+ (u_0 + \delta_n w)(x) = \int_0^t H^\ast (\xi_n (s), \dot{\xi}_n (s)) ds + u_0 (\xi_n (0)) + \delta_n w(\xi_n(0)).
$$
Using \eqref{value function general H}, we deduce
\begin{eqnarray*}
S_t^+ u_0 (x) + \delta_n w(\xi_n (0)) & = & \int_0^t H^\ast (\xi_{t,x}(s), \dot{\xi}_{t,x} (s) ) ds + u_0 (\xi_{t,x} (0)) + \delta_n w (\xi_n (0)) \\
&\leq &  \int_0^t H^\ast (\xi_n(s), \dot{\xi}_n (s) ) ds + u_0 (\xi_n (0)) + \delta_n w (\xi_n (0)) \\
&=& S_t^+ (u_0 +\delta_n w)(x),
\end{eqnarray*}
which combined with \eqref{contradiction hyp} implies
\begin{equation}\label{xi n far from optimal}
w (\xi_n(0)) \leq \dfrac{S_t^+ (u_0 + \delta_n w) (x) - S_t^+ u_0 (x)}{\delta_n} \leq w(\xi_{t,x}(0)) - \alpha \qquad \forall n\in\N.
\end{equation}

Using the fact that the function $u_0 + \delta_n w$ is globally Lipschitz with a Lipschitz constant independent of $n$, we deduce from Lemma \ref{lem:regular points}, and using the hypothesis \eqref{H lipschitz in x}, that there exists a constant $C_T >0$ such that the viscosity solution $u_n \in \Lip([0,T]\times\R^N)$
to \eqref{HJ eq} with initial condition $u_0+\delta_n w$ is Lipschitz continuous with Lipschitz constant $C_T$ independent of $n\in \N$.
This implies in particular that the dual arc $p_n(\cdot)$ associated to each optimal trajectory $\xi_n(\cdot)$ satisfies
\begin{equation*}
|p_n(s)| \leq C_T  \qquad \forall s\in [0,t].
\end{equation*}
We then deduce, using \eqref{H lipschitz in p}, that there exists another constant $C_T'>0$, also independent of $n$, such that
$$
| \dot{\xi}_n (s) | \leq C_T' \qquad \forall s\in [0,t] \quad \forall n\in \N.
$$
Hence,  by Arzéla-Ascoli Theorem, there exists $\xi_\infty(\cdot) \in W^{1,1}(0,T;\R^N)$ such that $\xi_n(\cdot) \to \xi_\infty(\cdot)$ uniformly in $(0,t)$.

Now,  in view of \eqref{xi n far from optimal}, we deduce that $\xi_\infty (\cdot) \neq \xi_{t,x}(\cdot)$,
which combined with
 the fact that, by means of Lemma \ref{lem:regular points} (ii), $\xi_{t,x}(\cdot)$ is the unique minimizer for the right-hand-side of \eqref{value function general H}, implies that
there exists $\beta >0$ such that
$$
\int_0^t H^\ast (\xi_\infty (s), \dot{\xi}_\infty (s) )ds + u_0 (\xi_\infty (0)) = 
\int_0^t H^\ast (\xi_{t,x} (s), \dot{\xi}_{t,x} (s) )ds + u_0 (\xi_{t,x} (0)) + \beta.
$$

On the other hand, since $H^\ast (x,q)$ is continuous with respect to both variables and convex in $q$, we deduce that the right-hand-side of \eqref{value function general H} is weakly lower semicontinuous, and then
\begin{eqnarray*}
\liminf_{n\to \infty} \int_0^t H^\ast (\xi_n (s), \dot{\xi}_n (s) )ds + u_0 (\xi_n (0)) & \geq & \int_0^t H^\ast (\xi_\infty (s), \dot{\xi}_\infty (s) )ds + u_0 (\xi_\infty (0)) \\
&=& \int_0^t H^\ast (\xi_{t,x} (s), \dot{\xi}_{t,x} (s) )ds + u_0 (\xi_{t,x} (0)) + \beta,
\end{eqnarray*}
which implies that we can extract another subsequence $\delta_n \to 0$ such that
$$
 \int_0^t H^\ast (\xi_n (s), \dot{\xi}_n (s) )ds + u_0 (\xi_n (0))
 \geq \int_0^t H^\ast (\xi_{t,x} (s), \dot{\xi}_{t,x} (s) )ds + u_0 (\xi_{t,x} (0)) + \dfrac{\beta}{2} \qquad \forall n\geq 1.
 $$
 
Finally, using the optimality of $\xi_n$ for each $n$, we have in addition that
 \begin{eqnarray*}
\displaystyle\int_0^t H^\ast (\xi_{t,x} (s), \dot{\xi}_{t,x} (s) )ds + u_0 (\xi_{t,x} (0)) + \delta_n w(\xi_{t,x}(0)) & \geq &
 \displaystyle\int_0^t H^\ast (\xi_n (s), \dot{\xi}_n (s) )ds + u_0 (\xi_n (0)) \\
 & &  + \delta_n w(\xi_n(0))
 \end{eqnarray*}
for all $n\geq 1$, and adding the last two inequalities we obtain
 $$
 \delta_n \left( w(\xi_{t,x}(0)) - w(\xi_n (0))  \right) \geq \dfrac{\beta}{2} >0,
 $$
 which leads to a contradiction since $\delta_n\to 0$.
 The inequality \eqref{liminf proof differentiability} then follows, and together with \eqref{limsup proof differentiability}, implies the conclusion of the theorem.
\end{proof}

Let us finish the section with the proof of Theorem \ref{thm: differentiability general weak convergence}.

\begin{proof}[Proof of Theorem \ref{thm: differentiability general weak convergence}]
In one hand, since $S_t^+u_0(\cdot)$ is Lipschitz continuous, and then differentiable almost everywhere in $\R^N$, by means of Proposition \ref{prop: differentiability general} we have
\begin{equation}\label{lebesgue 1}
\dfrac{S_t^+ (u_0 + \delta w) (x) - S_t^+ u_0 (x) }{\delta}
\to  \partial_w S_t^+ u_0 (x) \quad \text{for a.e. } \ x\in \R^N.
\end{equation}
Now, let $K\subset\subset \R^N$ be any compact set.
We claim that there exists another compact set $K'\subset\subset \R^N$ such that, for any $\delta>0$, it holds that
\begin{equation}\label{lebesgue 2}
\left| \dfrac{S_t^+ (u_0 + \delta w) (x) - S_t^+ u_0 (x)}{\delta}  \right| \leq \| w\|_{L^\infty (K')} \qquad \forall x\in \R^N.
\end{equation}
Indeed, by the definition of $S_t^+$ in \eqref{value function general H},  for any $x\in K$ there exists $\xi_1 \in C^1 (0,t;\R^N)$ such that
$$
S_t^+ u_0 (x) = \int_0^t H^\ast (\xi_1 (s), \dot{\xi}_1 (s)) ds + u_0 (\xi_1 (0)),
$$
which then implies that
\begin{eqnarray}
S_t^+ (u_0 + \delta w) (x) - S_t^+ u_0 (x) & \leq &  \int_0^t H^\ast (\xi_1 (s), \dot{\xi}_1 (s)) ds + u_0 (\xi_1 (0))+ \delta w (\xi_1(0)) \nonumber \\
& &  -  \int_0^t H^\ast (\xi_1 (s), \dot{\xi}_1 (s)) ds - u_0 (\xi_1 (0)) \label{estimate compact set}
 \\
&=& \delta w (\xi_1(0)). \nonumber
\end{eqnarray}
Moreover, using the hypotheses \eqref{H lipschitz in x} and \eqref{H lipschitz in p}, we deduce that the trajectory $\xi_1(\cdot)$ is contained in a compact set $K'\subset\subset \R^N$ independent of $x\in K$,
which in turn implies,  combined with \eqref{estimate compact set}, that
$$
S_t^+ (u_0 + \delta w) (x) - S_t^+ u_0 (x) \leq 
 \delta \| w\|_{L^\infty(K')}.
$$
Using a similar argument, with an arc $\xi_2 \in C^1 (0,t;\R^N)$ minimizing the right-hand-side of \eqref{value function general H} with $u_0+\delta w$ instead of $u_0$ we obtain
$$
S_t^+ (u_0 + \delta w) (x) - S_t^+ u_0 (x) \geq  \delta w (\xi_2(0)) \geq - \delta \| w\|_{L^\infty (K')},
$$
and then \eqref{lebesgue 2} follows.

Now,  in view of  \eqref{lebesgue 1} and \eqref{lebesgue 2}, we can use Lebesgue's dominated convergence Theorem to deduce
$$
\lim_{\delta \to 0^+} \int_{K}\left| \dfrac{S_t^+ (u_0 + \delta w) (x) - S_t^+ u_0 (x) }{\delta} -  \partial_w S_t^+ u_0 (x) \right| dx = 0, \quad \text{for all compact set $K\subset\subset \R^N$,}
$$
which implies that 
$$
\dfrac{S_t^+ (u_0 + \delta w)  - S_t^+ u_0  }{\delta} 
\longrightarrow  \partial_w S_t^+ u_0 , \qquad \text{as} \quad  \delta\to 0^+ \quad \text{in} \ L^1_{loc}(\R^N).
$$
\end{proof}

\section{G\^ateaux derivatives and linear transport equations}
\label{sec: linearization}

The goal in this section is to prove that, for any $u_0, w \in \Lip (\R^N)$,  the directional Gateaux derivative $\partial_w S_T^+ u_0$, that we proved to exist in Theorem \ref{thm: differentiability general weak convergence},  is actually the unique duality solution to the following linear transport equation:
\begin{equation}\label{transport equation intro}
\left\{\begin{array}{ll}
\partial_t v + a(t,x)\cdot  \nabla_x v = 0 & (t,x) \in (0,T)\times \R^N \\
v(0,x) = w(x) & x\in \R^N,
\end{array}\right.
\end{equation}
where the transport coefficient $a(t,x)$ is given by
\begin{equation}\label{transport-coef thm}
a(t,x) := H_p (x,\nabla_x [S_t^+ u_0 (x)]).
\end{equation}
The precise definition and the main properties of duality solutions for the equation \eqref{transport equation intro} are given in subsection \ref{subsec: duality solutions}. We refer to \cite{bouchut1998one,bouchut2005uniqueness} for further details concerning the theory of duality solutions for transport equations with discontinuous transport coefficient satisfying a one-sided-Lipschitz condition.

As it is well-known, when $t>0$ is sufficiently large, the solution to \eqref{HJ eq} eventually looses regularity and its gradient develops jump-discontinuities.
This in turn implies that the transport coefficient \eqref{transport-coef thm} is no longer continuous, and therefore, the  equation \eqref{transport equation intro} cannot be solved by the classical method of characteristics.
A key-feature in our proof to establish well-posedness for \eqref{transport equation intro} is the fact that the transport coefficient $a(t,x)$ defined in \eqref{transport-coef thm} satisfies the following one-sided Lipschitz condition (OSLC)
\begin{equation}\label{OSLC OSLC}
\langle a(t,y)-a(t,x), \, y-x \rangle \leq \dfrac{C}{t}  |y-x|^2  \qquad \text{for a.e.} \  (t,x,y)\in (0,T)\times\R^N\times\R^N,
\end{equation}
where $C>0$ is a positive constant.
In the work by Bouchut-James-Mancini \cite{bouchut2005uniqueness}, existence, uniqueness and stability is established for linear transport equations as \eqref{transport equation intro} with a transport coefficient satisfying the OSLC condition. However, we are not able to directly apply the results in \cite{bouchut2005uniqueness} since the function $\alpha(t) = C/t$ does not belong to $L^1(0,T)$.
Moreover, we are only able to treat the one-dimensional case in space, and the case of quadratic Hamiltonians of the form
\begin{equation}
\label{quadratic Hamiltonian section}
H(x,p) = |p|^2 + f(x),
\end{equation}
where $f\in C^2(\R^N)$ is bounded and globally Lipschitz.
In these two cases, \eqref{OSLC OSLC} can be deduced from the semiconcavity of the viscosity solution, whereas this is not the case for general convex Hamiltonians in dimension higher than 1.

\subsection{Directional Gâteaux derivatives as duality solutions}
\label{subsec:gateaux deriv duality sol}

Let us state the main result of this section, which ensures that the directional Gâteaux derivative of the forward viscosity operator $S_T^+$ at $u_0$ in the direction $w$ is the unique duality solution to the linear transport equation \eqref{transport equation intro}.

\begin{theorem}\label{thm:gateaux deriv transport}
Let $T>0$, $u_0, w \in \Lip(\R^N)$, and let, either $N=1$ and $H$ be a Hamiltonian satisfying \eqref{general Hcond}, \eqref{H bounded}, \eqref{H lipschitz in x} and \eqref{H lipschitz in p}, or let $N>1$ and $H$ be of the form \eqref{quadratic Hamiltonian section}. Then we have
$$
\dfrac{S_t^+ (u_0 + \delta w)(x) - S_t^+ u_0 (x)}{\delta} \to v(t,x)  \qquad \text{in} \ C([0,T], L^1_{loc} (\R^N))
$$
where $v(t,x)$ is the unique duality solution to the linear transport equation
\eqref{transport equation intro} with transport coefficient $a(t,x)$ given by \eqref{transport-coef thm} and initial condition $v(0,\cdot) = w(\cdot)$.
\end{theorem}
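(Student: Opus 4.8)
The plan is to assemble the statement from three ingredients that are, in part, already available: the explicit formula for the G\^ateaux derivative from Theorem \ref{thm: differentiability general weak convergence}, a one-sided Lipschitz (OSLC) bound for the transport coefficient $a$, and a well-posedness theory for duality solutions of \eqref{transport equation intro} that is robust to the fact that the OSLC modulus $\alpha(t)=C/t$ fails to be integrable at $t=0$.

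First I would establish that $a(t,x)=H_p(x,\nabla_x S_t^+u_0(x))$ satisfies \eqref{OSLC OSLC}. In the quadratic case $H(x,p)=|p|^2+f(x)$ one has $H_p(x,p)=2p$, so $a=2\nabla_x S_t^+u_0$ and \eqref{OSLC OSLC} is nothing but the sharp semiconcavity bound $D^2_x S_t^+u_0\le (C/t)I$, giving $\langle a(t,y)-a(t,x),\,y-x\rangle = 2\langle \nabla_x S_t^+u_0(t,y)-\nabla_x S_t^+u_0(t,x),\,y-x\rangle \le 2(C/t)|y-x|^2$. In the one-dimensional case I would expand $a(t,y)-a(t,x)$ and combine the uniform convexity $H_{pp}\ge c_0>0$ with the one-sided bound on $\partial_x S_t^+u_0$; both instances are exactly what Proposition \ref{prop: semiconcavity property} provides, so I would simply invoke it. This is the only point at which the restriction to $N=1$ or quadratic $H$ enters, since for general convex $H$ in higher dimension semiconcavity does not transfer to OSLC of $a$.

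Next I would set up the duality framework. Because $\alpha(t)=C/t$ is integrable on every $[\tau,T]$ with $\tau>0$, the Bouchut--James--Mancini theory \cite{bouchut2005uniqueness} applies verbatim on each such slab: the backward conservative equation $\partial_t\pi+\operatorname{div}_x(a\pi)=0$ admits a unique reversible solution from prescribed terminal data. The whole difficulty is at the origin, and here I would use Proposition \ref{prop:reversible unique extension}, which shows that each reversible solution extends continuously, in the weak-$\ast$ topology of measures, to a Radon measure $\pi^0$ at $t=0$. Since this extension is only a measure, pairing it against the initial datum requires $w$ to be continuous, which is precisely why the statement is restricted to $w\in\Lip(\R^N)$. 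With the extension in hand, a duality solution $v$ to \eqref{transport equation intro} is characterized by requiring $\int_{\R^N} v(t_0,x)\,\beta(x)\,dx=\int_{\R^N} w\,d\pi^0$ for every $t_0\in(0,T]$ and every terminal datum $\beta$, with $\pi$ the backward reversible solution issued from $\beta$ at time $t_0$; this yields both existence and uniqueness of $v$.

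Finally I would identify $v$ with $\partial_w S_t^+u_0$ and upgrade the convergence. The formula $\partial_w S_t^+u_0(x)=w(\Phi_{u_0}^t(x))=w(\xi_{t,x}(0))$ from \eqref{Phi def} exhibits the derivative as $w$ transported along the backward characteristics of the optimality system, which are exactly the characteristics associated with $a$; one then checks that the pairing $t\mapsto\int_{\R^N} w(\Phi_{u_0}^t(x))\,\pi(t,x)\,dx$ is constant, so that $w\circ\Phi_{u_0}^t$ meets the defining property of the duality solution. I would make this rigorous by a stability argument: mollify $a$ into smooth coefficients $a_\eps$ carrying a \emph{uniform} OSLC bound, for which the transport equation has the classical solution $w(\xi^\eps_{t,x}(0))$; stability of duality solutions gives $v_\eps\to v$, while convergence of the flows gives $w(\xi^\eps_{t,x}(0))\to w(\xi_{t,x}(0))$, forcing $v=\partial_w S_t^+u_0$. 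The passage from the pointwise-in-$t$ convergence in $L^1_{loc}(\R^N)$ of Theorem \ref{thm: differentiability general weak convergence} to convergence in $C([0,T];L^1_{loc}(\R^N))$ then follows from the time-regularity built into the notion of duality solution together with the uniform $L^\infty$ and compact-support bounds already isolated in \eqref{lebesgue 2}. The main obstacle is the analysis at $t=0$: the failure of $\alpha\in L^1(0,T)$ prevents any direct quotation of the existence/uniqueness/stability package, and one must instead control the backward reversible solutions uniformly as $t\to0^+$ and show they concentrate onto the well-defined measure $\pi^0$ — this is the content of Proposition \ref{prop:reversible unique extension}, and it is what simultaneously forces the hypotheses $N=1$ or quadratic $H$ and the continuity of the initial datum $w$.
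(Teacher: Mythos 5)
Your overall architecture matches the paper's: OSLC for $a$ via the semiconcavity estimate (Proposition \ref{prop: semiconcavity property} and Corollary \ref{cor: OSLC}), the Bouchut--James--Mancini theory applied on slabs $[\tau,T]$, and uniqueness of the duality solution obtained by pairing against backward reversible solutions and passing to the unique measure extension $\pi^0$ at $t=0$ from Proposition \ref{prop:reversible unique extension}. The uniqueness argument you sketch is essentially verbatim the paper's.

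Where you genuinely diverge is the identification of the duality solution with $\partial_w S_t^+u_0$, and this is where your proposal has a gap. You propose to mollify $a$ into smooth $a_\eps$ with uniform OSLC, solve classically along the flow of $a_\eps$, and conclude by ``convergence of the flows gives $w(\xi^\eps_{t,x}(0))\to w(\xi_{t,x}(0))$.'' But for a coefficient that is merely OSLC, the \emph{backward} flow (the trajectory reaching $x$ at time $t$) is not unique --- the OSLC only yields forward uniqueness --- and the limit of the mollified backward flows need not a priori select the optimal characteristic $\xi_{t,x}$ of the system \eqref{optimality system thm}. One would have to prove that at every point of differentiability of $S_t^+u_0$ (where the optimal trajectory is unique by Lemma \ref{lem:regular points}) the mollified flows do converge to it; this is plausible but is an unproven step, and it is precisely the step the paper avoids. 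The paper instead takes the difference quotients $v_\delta$ themselves as the approximating family: each $v_\delta$ is Lipschitz and solves $\partial_t v_\delta+a_\delta\cdot\nabla_x v_\delta=0$ a.e.\ with the coefficient $a_\delta$ of \eqref{a_delta} built from the Taylor expansion of $H$ along the two solutions, hence is a duality solution by Lemma \ref{lem: duality solutions}; since $a_\delta\rightharpoonup a$ weak-$\ast$ with uniform OSLC on $[\varepsilon,T]$, the stability Theorem \ref{thm:weak stability duality} shows the limit is a duality solution, while Theorem \ref{thm: differentiability general weak convergence} already identifies that same limit as $\partial_w S_t^+u_0$ --- no flow-convergence argument is needed. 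If you keep your mollification route, you must supply the missing convergence of the regularized backward characteristics to the optimal ones almost everywhere; otherwise, replace that step by the paper's argument via the coefficients $a_\delta$.
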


The proof of this theorem is postponed to subsection \ref{subsec: proof linearization},
and relies on the well-posedness of the dual equation to \eqref{transport equation intro},  which is the backward conservative equation
 \begin{equation}\label{eq: conservative backward opt cond}
\left\{
\begin{array}{ll}
\partial_t \pi + \operatorname{div}_x (a(t,x) \pi) = 0 & \text{in} \ (0,T)\times \R^N, \\
\noalign{\vspace{2pt}}
\pi (T,x) = \pi^T (x) & \text{in} \ \R^N.
\end{array}\right.
\end{equation}
where the coefficient $a(t,x)$ is given by \eqref{transport-coef thm}, and $\pi^T\in L^\infty(\R^N)$ is any given terminal condition.
Using the results in \cite{bouchut2005uniqueness}, we can deduce that there exists a unique reversible\footnote{See Definition \ref{def: reversible sol} and Theorem \ref{thm:reversible wellposedness}.} solution $\pi \in C((0,T], L^\infty-w^\ast)$ to the terminal value problem \eqref{eq: conservative backward opt cond}.
However, the fact that the right-hand-side of \eqref{OSLC OSLC} does not belong to $L^1(0,T)$, prevents us from extending by continuity\footnote{with respect to the $\text{weak}^\ast$ topology in $L^\infty(\R^N)$.} $\pi$ at $t=0$ by a $L^\infty$ function.
This allows us to prove, in Proposition \ref{prop: deriv transport eq}, that the Gâteaux derivative $v(t,x) = \partial_w S_T^+u_0$ is a duality solution to the forward equation \eqref{transport equation intro}.
However, in order to ensure that this duality solution is actually the unique duality solution, we need to use Proposition \ref{prop:reversible unique extension}, where we prove that the reversible solutions to \eqref{eq: conservative backward opt cond} can be uniquely extended by continuity at $t=0$ by a Radon measure in $\R^N$.
This unique extension result uses in a crucial manner the fact that the coefficient $a(t,x)$ is of the form \eqref{transport-coef thm}.

\subsection{First-order optimality condition by means of the dual equation}\label{subsec: opt cond dual}

Here, we  use the differentiability result from Theorem \ref{thm:gateaux deriv transport} to compute the gradient of the functional $\mathcal{J}_T$ defined in the
 optimization problem \eqref{L2 OCP S^+}, that we recall here
$$
\underset{u_0\in \Lip_0(\R^N)}{\text{minimize}} \  \mathcal{J}_T (u_0) : =  \| S_T^+ u_0 (\cdot) - u_T(\cdot) \|_{L^2 (\R^N)}^2 ,
$$
 in terms of the dual equation to the linear transport equation \eqref{transport equation intro}, which is the backward conservative equation \eqref{eq: conservative backward opt cond}, which will be proved to be well-posed in Proposition \ref{prop:reversible unique extension},  in the class of reversible solutions (see \cite{bouchut1998one, bouchut2005uniqueness}), for any terminal condition $\pi^T\in L^\infty(\R^N)$ with compact support.

\begin{theorem}
\label{thm: opt cond dual equation}
Let $T>0$, $u_0, w \in \Lip(\R^N)$, and let, either $N=1$ and $H$ be a Hamiltonian satisfying \eqref{general Hcond}, \eqref{H bounded}, \eqref{H lipschitz in x} and \eqref{H lipschitz in p}, or let $N>1$ and $H(p) = |p|^2$.
Then, the gradient of the functional $\mathcal{J}_T$ is given by the linear functional
\begin{equation}
\label{grad J_T duality}
\begin{array}{cccc}
D \mathcal{J}_T (u_0) : & \Lip(\R^N) & \longrightarrow & \R^N \\
& w(\cdot) & \longmapsto & \partial_w \mathcal{J}_T (u_0) := 2 \displaystyle\int_{\R^N} w(x) d\pi^0(x),
\end{array}
\end{equation}
where $\pi^0 \in \mathcal{M}(\R^N)$ is the unique Radon measure which extends\footnote{The uniqueness of the extension at $t=0$ is shown in Proposition \ref{prop:reversible unique extension}.} by continuity in $\mathcal{M}(\R^N)-w^\ast$, at time $t=0$, the unique reversible solution to the conservative transport equation \eqref{eq: conservative backward opt cond} with terminal condition
$$
\pi^T(x) = S_T^+ u_0 (x) - u_T(x) \qquad \forall x\in \R^N.
$$ 
Hence, any initial condition $u_0\in \Lip_0(\R^N)$, solution to the optimization problem \eqref{L2 OCP S^+} ,  satisfies  $D \mathcal{J}_T (u_0) =0,$ where $D\mathcal{J}_T(u_0)$ is the Radon measure defined in \eqref{grad J_T duality}.
\end{theorem}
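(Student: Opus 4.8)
The plan is to obtain the dual representation \eqref{grad J_T duality} by transporting the explicit first-order formula already proved in Theorem~\ref{thm:optimality condition} from the terminal time $t=T$ back to the initial time $t=0$, using the duality between the forward transport equation \eqref{transport equation intro} and the backward conservative equation \eqref{eq: conservative backward opt cond}. First I would note that Theorem~\ref{thm:optimality condition} is available under the present hypotheses and yields, for every $w\in\Lip(\R^N)$,
$$
\partial_w \mathcal{J}_T(u_0) = 2\int_{\R^N}\big(S_T^+ u_0(x)-u_T(x)\big)\,\partial_w S_T^+ u_0(x)\,dx.
$$
By Theorem~\ref{thm:gateaux deriv transport}, in the one-dimensional case and in the multidimensional quadratic case the Gâteaux derivative $\partial_w S_T^+ u_0(\cdot)$ is exactly $v(T,\cdot)$, where $v$ is the unique duality solution of \eqref{transport equation intro} with coefficient \eqref{transport-coef thm} and datum $v(0,\cdot)=w$. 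Setting $\pi^T:=S_T^+ u_0-u_T$, this casts the directional derivative as the terminal-time pairing
$$
\partial_w \mathcal{J}_T(u_0)=2\int_{\R^N}\pi^T(x)\,v(T,x)\,dx.
$$

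The core of the argument is then the time-invariance of the pairing $t\mapsto\int_{\R^N} v(t,x)\,\pi(t,x)\,dx$, where $\pi\in C((0,T];L^1_{loc}(\R^N))$ is the unique reversible solution of \eqref{eq: conservative backward opt cond} with terminal datum $\pi^T$. This constancy is exactly the defining property of duality solutions recalled in subsection~\ref{subsec: duality solutions}: formally $\tfrac{d}{dt}\int v\,\pi = \int(\partial_t v)\pi + v(\partial_t\pi) = \int(-a\cdot\nabla_x v)\pi - v\,\operatorname{div}_x(a\pi) = 0$ after one integration by parts, and the notion of duality solution, together with the reversibility of $\pi$, is designed precisely to make this identity rigorous despite the discontinuities of $a$. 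In particular the value of the pairing at $t=T$, namely $\int_{\R^N}\pi^T v(T)\,dx$, coincides with its value for every $t\in(0,T]$.

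It remains to identify the limit of this constant pairing as $t\to 0^+$, which is the only genuinely delicate point and the sole place where the special structure \eqref{transport-coef thm} of the coefficient is used. Because the one-sided Lipschitz rate behaves like $C/t\notin L^1(0,T)$, the backward solution $\pi(t,\cdot)$ cannot be extended at $t=0$ as an $L^\infty$ function; instead Proposition~\ref{prop:reversible unique extension} provides a unique Radon measure $\pi^0\in\mathcal{M}(\R^N)$ with $\pi(t,\cdot)\rightharpoonup\pi^0$ in $\mathcal{M}(\R^N)$ in the weak-$\ast$ topology as $t\to 0^+$. Using that $v\in C([0,T];L^1_{loc}(\R^N))$ with $v(0,\cdot)=w$ continuous, and that the compact support of $\pi^T$ together with $a\in[L^\infty((0,T)\times\R^N)]^N$ keeps the supports and masses of $\pi(t,\cdot)$ uniformly bounded on $[0,T]$, I would pass to the limit to get $\int_{\R^N} v(t,\cdot)\pi(t,\cdot)\,dx\to\int_{\R^N} w\,d\pi^0$. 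Combining the three identities yields $\partial_w \mathcal{J}_T(u_0)=2\int_{\R^N} w\,d\pi^0$, which is \eqref{grad J_T duality}; as in Theorem~\ref{thm:optimality condition}, the vanishing of all directional derivatives at a minimizer of \eqref{L2 OCP S^+} then gives the optimality condition $D\mathcal{J}_T(u_0)=0$. The main obstacle, which Proposition~\ref{prop:reversible unique extension} is tailored to overcome, is precisely controlling this $t\to 0^+$ pairing when $\pi^0$ is only a measure.
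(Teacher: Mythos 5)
Your proposal is correct and follows essentially the same route as the paper's proof: apply Theorem \ref{thm:optimality condition}, identify $\partial_w S_T^+ u_0$ with $v(T,\cdot)$ via Theorem \ref{thm:gateaux deriv transport}, invoke the constancy in $t$ of the duality pairing $\int_{\R^N} v(t,x)\pi(t,x)\,dx$, and pass to the limit $t\to 0^+$ using the unique measure-valued extension from Proposition \ref{prop:reversible unique extension}. Your write-up is if anything slightly more explicit than the paper's about justifying the $t\to 0^+$ limit of the pairing.
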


\begin{proof}
We prove the Theorem \ref{thm: opt cond dual equation}, assuming that Theorem \ref{thm:gateaux deriv transport} (that we will prove later) is true.
We need to prove that the linear functional $D\mathcal{J}_T (u_0)$ defined in \eqref{grad J_T duality} satisfies
$$
D \mathcal{J}_T(u_0) [w]  = \partial_w \mathcal{J}_T(u_0) :=  \lim_{\delta\to 0^+} \dfrac{\mathcal{J}_T (u_0 + \delta w) - \mathcal{J}_T (u_0)}{\delta}, \qquad \forall w\in \Lip(\R^N).
$$

Using the definition of $\mathcal{J}_T$ and Theorem \ref{thm:optimality condition}, together with Theorem \ref{thm:gateaux deriv transport} and the fact that $S_T^+ u_0 - u_T$ is compactly supported, we can compute
\begin{eqnarray*}
 \partial_w \mathcal{J}_T(u_0)  &=& 2 \displaystyle\int_{\R^N} \left( S_T^+ u_0 (x) - u_T(x) \right) \partial_w S_T^+ u_0 (x) dx \\
&=& 2 \displaystyle\int_{\R^N}  \left( S_T^+ u_0 (x) - u_T(x) \right) v(T,x) dx,
\end{eqnarray*}
where $v(T,\cdot)$ is the unique duality solution to \eqref{eq: conservative backward opt cond} at time $t=T$, with initial condition $w$. 
Now, using the definition of duality solution (see Definition \ref{def:duality sol}), we have that the map
$$
t\longmapsto \displaystyle\int_{\R^N} v(t,x) \pi(t,x) dx \qquad \text{is constant in} (0,T],
$$
where $\pi \in C((0,T], L^\infty-w^\ast)$ is the unique reversible solution to the conservative transport equation \eqref{eq: conservative backward opt cond} with terminal consition $\pi^T(\cdot) = S_T^+u_0(\cdot) - u_T(\cdot)$ (see Definition \ref{def: reversible sol} and Theorem \ref{thm:reversible wellposedness}).
Finally,  by Proposition \ref{prop:reversible unique extension}, we conclude that
$$
 \partial_w \mathcal{J}_T(u_0)  = 2\displaystyle\int_{\R^N} w(0,x) d\pi^0(x),
$$
where $\pi^0$ is the unique measure that extends the reversible solution $\pi$ by continuity in $\mathcal{M}(\R^N)-w^\ast$, at $t=0$.
\end{proof}

\subsection{Duality solutions}
\label{subsec: duality solutions}

In this subsection, we briefly recall the definition and main properties of duality solutions to linear transport equations with a coefficient satisfying the one-sided-Lipschitz condition.   We refer to \cite{bouchut2005uniqueness} for a more detailed presentation and the proofs of the results presented in this subsection.

We consider the linear transport equation
\begin{equation}\label{linear transport subsec}
\left\{ \begin{array}{ll}
\partial_t v + a(t,x) \cdot \nabla v = 0 &  \text{in} \ (0,T)\times \R^N \\
\noalign{\vspace{2pt}}
v(0,x)  = v_0 (x) & \text{in} \ \R^N,
\end{array}\right.
\end{equation}
where the initial condition satisfies $v_0 \in BV_{loc} (\R^N)$, and the vector field $a \in L^\infty ((0,T)\times \R^N; \R^N)$ is the so-called transport coefficient, that can have discontinuities, but is assumed to satisfy the OSLC condition
\begin{equation}\label{OSLC}
 \langle a(t,y)-a(t,x), \, y-x \rangle  \leq \alpha(t)  |y-x|^2  \qquad \text{for a.e.} \  (t,x,y)\in (0,T)\times\R^N\times\R^N,
\end{equation}
for some function $\alpha \in L^1 (0,T)$.

Note that  \eqref{OSLC} implies only an upper bound on $\text{div}_x a$, and thus, $\text{div}_x a$ may not be absolutely continuous with respect to the Lebesgue measure, preventing us from using the renormalized approach by DiPerna-Lions in \cite{diperna1989ordinary}.
The framework that we have chosen to deal with transport equations with discontinuous coefficients $a\in L^\infty ((0,T)\times \R^N)$ satisfying OSLC condition \eqref{OSLC} is the one of duality solutions, established by Bouchut-James \cite{bouchut1998one} for the one-dimensional case in space, and by Bouchut-James-Mancini in \cite{bouchut2005uniqueness} for the multidimensional case.

The main idea in \cite{bouchut2005uniqueness} to establish well-posedness for the problem \eqref{linear transport subsec} consists in solving  the dual (or adjoint) equation to \eqref{linear transport subsec}, which is a conservative transport equation of the form
\begin{equation}\label{conservative backward}
\left\{
\begin{array}{ll}
\partial_t \pi + \text{div}_x (a(t,x) \pi) = 0 & \text{in} \ (0,T)\times \R^N, \\
\noalign{\vspace{2pt}}
\pi (T,x) = \pi^T (x) & \text{in} \ \R^N.
\end{array}\right.
\end{equation}

Following the approach by Bouchut-James-Mancini in \cite{bouchut2005uniqueness}, we define the reversible solutions to \eqref{conservative backward} by using the notion of transport flow.

\begin{definition}[Transport flow]\label{def: backward flow}
Let $T>0$, and $a(\cdot,\cdot)\in L^\infty((0,T)\times \R^N)$ be given.
We say that the Lipschitz map
$$
X^T :  [0,T] \times \R^N \longrightarrow \R^N  
$$
is a \emph{backward flow} in $[0,T]\times \R$ associated to $a(\cdot, \cdot)$ if 
$$
\left\{ \begin{array}{ll}
\partial_t X^T(t,x)  + a(t,x) \cdot \nabla_x X^T(t,x) = 0  & \text{a.e.  in} \ (0,T)\times \R^N \\
\noalign{\vspace{2pt}}
X^T (T,x) = x & \text{in} \ \R^N.
\end{array}\right.
$$
and satisfies $\text{det} (\nabla_x X^T ) \geq 0$ for all $t\in [0, T]$, where 
$\nabla_x X^T$ is the distributional Jacobian of the vector field $X^T$.
\end{definition}

With the notion of transport flow, we can now define the notion of reversible solution for the conservative transport equation \eqref{conservative backward}.

\begin{definition}[Reversible solution]
\label{def: reversible sol}
We say that $\pi \in C([0,T], L_{loc}^\infty (\R^N)-w^\ast)$ is a reversible solution to \eqref{conservative backward}, if for some transport flow $X^T$ one has
$$
\pi (t,x) = \pi (T, X^T(t,x)) \text{det} (\nabla_x X^T (t,x))
$$
for all $0\leq t\leq T$ and a.e. $x\in \R^N$.
\end{definition}

In \cite{bouchut2005uniqueness}, it is proved that for any transport coefficient $a$ satisfying \eqref{OSLC}, there exists at least a transport flow, however, uniqueness cannot be ensured in the multi-dimensional case.
Nonetheless,  the property that yields uniqueness for the problem \eqref{conservative backward} is the fact that any transport flow associated to $a$ have the same Jacobian determinant.
Indeed, it can be proved that $ \text{det} (\nabla_x X^T (t,x))$ actually vanishes in the regions where $X^T$ is not uniquely determined.
Let us state the existence and uniqueness result for the backward conservative problem \eqref{conservative backward}, whose proof can be found in \cite{bouchut2005uniqueness}.

\begin{theorem}[Theorem 3.10 from \cite{bouchut2005uniqueness}]
\label{thm:reversible wellposedness}
Let $T>0$, and $a(\cdot,\cdot)\in L^\infty((0,T)\times \R^N)$ satisfying \eqref{OSLC} be given. 
For any $\pi^T \in L_{loc}^\infty (\R^N)$, there exists a unique reversible solution $\pi$ to \eqref{conservative backward} such that $\pi (T, \cdot) = \pi^T$.
Moreover, the solution can be given  by
$$
\pi (t,x) = \pi^T (X^T (t,x))  \text{det} (\nabla_x X^T (t,x)).
$$
\end{theorem}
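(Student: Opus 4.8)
The plan is to follow the regularization-and-compactness strategy underlying the duality theory of \cite{bouchut2005uniqueness}: construct a transport flow in the sense of Definition \ref{def: backward flow} by approximating $a$, define $\pi$ through the resulting pushforward formula, verify that it solves \eqref{conservative backward} in the distributional sense, and finally establish uniqueness by invoking the rigidity of the Jacobian determinant. Throughout, the interpretation to keep in mind is that the map $X^T(t,\cdot)$ solving the terminal-value transport equation in Definition \ref{def: backward flow} is, along the good characteristics of $a$, the \emph{forward} flow carrying positions at time $t$ to positions at time $T$; the OSLC condition \eqref{OSLC} is exactly what makes this forward flow well-behaved.

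First I would regularize in space. Let $a_\eps$ be smooth (in $x$) vector fields with $a_\eps \to a$ in $L^1_{loc}$, each still satisfying \eqref{OSLC} with the same $\alpha\in L^1(0,T)$; this is legitimate because spatial mollification preserves the one-sided Lipschitz inequality. For each $\eps$ the characteristics $\dot\gamma = a_\eps(s,\gamma)$ are classical, and setting $X^T_\eps(t,x)$ equal to the position at time $T$ of the characteristic passing through $x$ at time $t$, the OSLC inequality controls the forward spreading of trajectories, giving
\[
|X^T_\eps(t,x)-X^T_\eps(t,y)|^2 \le |x-y|^2 \exp\Big(2\int_t^T \alpha(s)\,ds\Big),
\]
so the maps $X^T_\eps(t,\cdot)$ are Lipschitz uniformly in $\eps$, and Lipschitz in $t$ by the $L^\infty$ bound on $a$. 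The Liouville formula, together with the bound $\operatorname{div}_x a_\eps \le N\alpha$ coming from \eqref{OSLC}, yields $0 \le \det(\nabla_x X^T_\eps) \le \exp\big(N\int_t^T\alpha\big)$, so the Jacobians stay nonnegative and bounded, and may degenerate to zero only where characteristics concentrate.

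Next I would pass to the limit. By the uniform Lipschitz bounds and the Arzel\`a--Ascoli theorem, a subsequence $X^T_\eps$ converges locally uniformly to a Lipschitz map $X^T$, which one checks is a transport flow associated to $a$ in the sense of Definition \ref{def: backward flow}, with $\det(\nabla_x X^T)\ge 0$ arising as a weak-$\ast$ limit of $\det(\nabla_x X^T_\eps)$. I would then define
\[
\pi(t,x) := \pi^T(X^T(t,x))\,\det(\nabla_x X^T(t,x)),
\]
verify that $\pi\in C([0,T],L^\infty_{loc}(\R^N)-w^\ast)$, and show it solves \eqref{conservative backward} in the distributional sense. This last point is the statement that $\pi(t,\cdot)\,dx$ is the pullback of $\pi^T\,dx$ along the flow; it follows by testing against smooth functions and performing the change of variables at the level of the regularized flows, where everything is classical, and then passing to the limit using the convergence established above.

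The last and hardest step is uniqueness. By Definition \ref{def: reversible sol} every reversible solution has the form $\pi^T(X^T(t,\cdot))\det(\nabla_x X^T(t,\cdot))$ for some transport flow, so uniqueness reduces to the independence of this expression from the chosen flow. The main obstacle is precisely that, for $N>1$, the terminal-value transport equation defining the flow need not have a unique solution: in the region reached by concentrating characteristics the flow is genuinely ambiguous. The crucial rigidity, which is the technical heart of \cite{bouchut2005uniqueness}, is that every transport flow has the \emph{same} distributional Jacobian determinant, and that $\det(\nabla_x X^T(t,x))$ vanishes almost everywhere on the set where $X^T(t,\cdot)$ is not uniquely determined. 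Granting this, the two candidate solutions vanish together on the ambiguous set and coincide off it, so any two reversible solutions agree almost everywhere. I expect this Jacobian rigidity --- proved in \cite{bouchut2005uniqueness} through the duality with the forward transport equation and the stability of reversible solutions --- to be the genuinely delicate point, the remaining steps being a priori estimates and weak-convergence arguments of a more routine nature.
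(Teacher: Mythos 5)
The paper does not prove this statement: it is quoted verbatim as Theorem 3.10 of \cite{bouchut2005uniqueness}, with the proof explicitly deferred to that reference, so there is no in-paper argument to compare against. Your sketch is a faithful reconstruction of the Bouchut--James--Mancini strategy as the paper itself summarizes it in the surrounding discussion: mollification in $x$ preserves the OSLC with the same $\alpha$, the Gronwall bound on forward characteristics gives the uniform Lipschitz estimate on $X^T_\eps(t,\cdot)$, the Liouville identity with $\operatorname{div}_x a_\eps \le N\alpha$ controls the Jacobians, and compactness produces a transport flow in the sense of Definition \ref{def: backward flow}. Two points deserve explicit mention if this were to be a real proof rather than an outline: the convergence $\det(\nabla_x X^T_\eps) \rightharpoonup \det(\nabla_x X^T)$ is not automatic from weak-$\ast$ convergence of gradients and must be justified by the weak continuity of Jacobians of uniformly Lipschitz, locally uniformly convergent maps (the null-Lagrangian property); and the entire uniqueness step rests on the rigidity lemma --- all transport flows share the same distributional Jacobian, which vanishes where the flow is ambiguous --- which you correctly identify as the technical heart but do not prove. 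Deferring that lemma to \cite{bouchut2005uniqueness} is consistent with the paper, which defers the whole theorem.
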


Let us now go back to the nonconservative  transport problem \eqref{linear transport subsec}.
Due to the low regularity of the transport coefficient $a (t,x)$,
we can only expect to have solutions of bounded variation in $x$.
Let us define the space
$$
\mathcal{S}_{BV} = C([0,T], L_{loc}^1 (\R^N)) \cap \mathcal{B} ([0,T], BV_{loc} (\R^N)),
$$
where $\mathcal{B}$ stands for the space of bounded functions.
Let us now give the definition of duality solution for the transport equation \eqref{linear transport subsec}.

\begin{definition}
\label{def:duality sol}
We say that $v \in \mathcal{S}_{BV}$ is a duality solution to \eqref{linear transport subsec} if for any $0 <\tau \leq T$ and for any reversible solution $\pi \in C([0,\tau], L_{loc}^\infty (\R^N)-w^\ast)$ to \eqref{conservative backward} with compact support in $x$, it holds that
$$
t \longmapsto \displaystyle\int_{\R^N} v(t,x) \pi (t,x) dx \qquad \text{is constant in} \ [0, \tau].
$$
\end{definition}

A relevant feature of duality solutions is the following property, which corresponds to Lemma 4.2 in \cite{bouchut2005uniqueness}.

\begin{lemma}[Lemma 4.2 in \cite{bouchut2005uniqueness}]
\label{lem: duality solutions}
Let $p \in \Lip_{loc} ([0,T]\times \R^N)$ solve 
$$
\partial_t p + a \nabla_x p = 0 \qquad \text{a.e.  in } \ (0,T)\times \R^N.
$$
Then $p$ is a duality solution.
\end{lemma}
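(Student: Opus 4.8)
The plan is to verify directly the two requirements in Definition \ref{def:duality sol}: that $p \in \mathcal{S}_{BV}$, and that for every $\tau \in (0,T]$ and every compactly supported reversible solution $\pi$ to \eqref{conservative backward} on $[0,\tau]$, the map $t \mapsto \int_{\R^N} p(t,x)\pi(t,x)\,dx$ is constant on $[0,\tau]$. The first requirement is immediate: since $p \in \Lip_{loc}([0,T]\times\R^N)$, it is Lipschitz in $t$ with values in $L^1_{loc}(\R^N)$, hence lies in $C([0,T],L^1_{loc}(\R^N))$, and for each $t$ the slice $p(t,\cdot)$ is locally Lipschitz with a locally bounded gradient uniformly in $t$, so $p \in \mathcal{B}([0,T],BV_{loc}(\R^N))$. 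The substance of the lemma is therefore the constancy of the pairing, and I would prove it by showing that the scalar function $\mu(t) := \int_{\R^N} p(t,x)\pi(t,x)\,dx$ --- which is continuous in $t$ because $p$ is continuous and $\pi \in C([0,\tau],L^\infty_{loc}-w^\ast)$ has compact support --- has vanishing distributional derivative on $(0,\tau)$.

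The conceptual core is the formal cancellation obtained by pairing the two equations. Fix $\phi \in C_c^\infty((0,\tau))$ and use $\Psi(t,x) := p(t,x)\phi(t)$ as a test function in the distributional formulation of the conservative equation satisfied by the reversible solution $\pi$, namely $\int_0^\tau\!\!\int_{\R^N} \pi\,(\partial_t \Psi + a\cdot\nabla_x \Psi)\,dx\,dt = 0$ (admissible after multiplying $\Psi$ by a spatial cutoff equal to one on the support of $\pi$, which does not alter the integrals). Because $p$ solves $\partial_t p + a\cdot\nabla_x p = 0$ a.e., one has $\partial_t\Psi + a\cdot\nabla_x\Psi = \phi' p + \phi(\partial_t p + a\cdot\nabla_x p) = \phi'\,p$, so the identity collapses to $\int_0^\tau \phi'(t)\,\mu(t)\,dt = 0$. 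Since $\phi$ is arbitrary and $\mu$ is continuous, this yields that $\mu$ is constant on $[0,\tau]$, which is exactly the defining property of a duality solution.

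The step that must be justified carefully --- and which I expect to be the main obstacle --- is that the merely Lipschitz function $\Psi = p\phi$ is an admissible test function in the weak formulation of \eqref{conservative backward}, which a priori holds only against $C^1$ test functions. I would regularize by mollifying $p$ in space, $p_\eps := p \ast \rho_\eps$, insert $\Psi_\eps = p_\eps\phi$, and pass to the limit $\eps \to 0$. The $\phi' p_\eps$ contribution converges to $\int_0^\tau \phi'(t)\mu(t)\,dt$ by dominated convergence (since $p_\eps \to p$ locally uniformly and $\pi$ is bounded with compact support), while the remaining contribution equals the \emph{commutator} term $\int_0^\tau\!\!\int_{\R^N}\pi\phi\,\big[a\cdot(\nabla_x p\ast\rho_\eps) - (a\cdot\nabla_x p)\ast\rho_\eps\big]\,dx\,dt$, which does not trivially vanish because $a$ is merely $L^\infty$ with a one-sided Lipschitz bound. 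The key point is that the OSLC condition \eqref{OSLC} forces $a(t,\cdot)$ to be, up to a Lipschitz correction, monotone, hence locally $BV$ in $x$ for a.e. $t$; this is enough to invoke a DiPerna--Lions/Ambrosio-type commutator estimate and conclude that the commutator tends to $0$.

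An alternative route that sidesteps the commutator uses the representation of the reversible solution from Theorem \ref{thm:reversible wellposedness}, $\pi(t,x) = \pi(\tau,X^\tau(t,x))\det(\nabla_x X^\tau(t,x))$, together with the transport flow $X^\tau$. Since $p$ and $x \mapsto p(\tau,X^\tau(t,x))$ both solve the transport equation a.e. with the same trace at $t=\tau$, one expects $p(t,x) = p(\tau,X^\tau(t,x))$ on the set $\{\det(\nabla_x X^\tau) > 0\}$ (the region not collapsed by the flow), whence $\mu(t) = \int_{\R^N} p(\tau,X^\tau(t,x))\pi(\tau,X^\tau(t,x))\det(\nabla_x X^\tau(t,x))\,dx = \int_{\R^N} p(\tau,y)\pi(\tau,y)\,dy = \mu(\tau)$ by the degree-one change-of-variables formula valid for the backward flow. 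Here the delicate points are the backward identification of $p$ along the possibly non-unique flow and the change-of-variables identity with the a.e.-defined Jacobian; both are precisely the facts established in \cite{bouchut2005uniqueness}, to which I would ultimately defer the flow-level technicalities.
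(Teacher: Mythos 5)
The paper does not prove this lemma at all: it is quoted verbatim as Lemma~4.2 of \cite{bouchut2005uniqueness} and used as a black box, so there is no internal proof to compare against. Judged on its own terms, your core argument is the right one and is essentially the standard duality computation: test the distributional formulation of the conservative equation against $\Psi=p\,\phi$ (with a spatial cutoff), use $\partial_t p+a\cdot\nabla_x p=0$ a.e.\ to collapse the integrand to $\phi'p$, and conclude that $\mu(t)=\int p\,\pi\,dx$ has zero distributional derivative, hence is constant by continuity. Two small debts you should acknowledge explicitly: (i) you need the fact, established in \cite{bouchut2005uniqueness} but not contained in the paper's Definition~\ref{def: reversible sol}, that reversible solutions are genuine distributional solutions of \eqref{conservative backward}; (ii) the continuity of $\mu$ uses the uniform compact support and uniform $L^\infty$ bound of $\pi$ together with local uniform convergence of $p(t,\cdot)$.

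The one step whose justification is genuinely off is the admissibility of the Lipschitz test function. You propose to control the commutator by arguing that the OSLC makes $a(t,\cdot)$ locally $BV$ and then invoking a DiPerna--Lions/Ambrosio-type estimate. This is precisely the route the paper itself rules out: the OSLC only bounds $\operatorname{div}_x a$ from above, its singular part can be a nontrivial negative measure, and Ambrosio's renormalization theory requires $\operatorname{div}_x a\ll\mathcal{L}^N$; for merely bounded solutions the commutator need not vanish in this regime (this failure is the raison d'\^etre of duality solutions). Fortunately the step is true for a much more elementary reason that you do not exploit: since $p$ is Lipschitz, $\nabla_x p\in L^\infty_{loc}$, so the commutator
$\int\bigl(a(t,x)-a(t,y)\bigr)\cdot\nabla_x p(t,y)\rho_\eps(x-y)\,dy$
is bounded in $L^1_{loc}$ by $\|\nabla_x p\|_\infty$ times the $L^1$-modulus of continuity of translations of $a$, which tends to zero for any $a\in L^1_{loc}$ with no $BV$ structure whatsoever. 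Even more directly, since $\pi$ and $a\pi$ are in $L^\infty_{loc}$, the distribution $\partial_t\pi+\operatorname{div}_x(a\pi)$ extends continuously to compactly supported $W^{1,1}$ test functions, so $\Psi=p\,\phi\,\chi$ is admissible by density of $C^\infty_c$ in $W^{1,1}_c$ and no mollification is needed at all. With the justification corrected in this way your first route is complete; the alternative flow-based route in your last paragraph is plausible but, as you concede, simply defers the delicate points (identification of $p$ along a possibly non-unique flow, the a.e.\ Jacobian change of variables) back to \cite{bouchut2005uniqueness}, so it adds little.
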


We end this subsection with the statements of the results from \cite{bouchut2005uniqueness} concerning the main properties of duality solutions to the problem \eqref{linear transport subsec}, namely, existence uniqueness and stability.

\begin{theorem}[Theorem 4.3 from \cite{bouchut2005uniqueness}]
\label{thm:duality wellposedness}
Let $T>0$, and let $a (\cdot, \cdot) \in L^\infty ((0,T)\times \R^N)$ satisfy \eqref{OSLC}.
For any $v_0\in BV_{loc} (\R^N)$, there exists a unique duality solution $v\in \mathcal{S}_{BV}$ to \eqref{linear transport subsec} such that $v(0,\cdot) = v_0$.
\end{theorem}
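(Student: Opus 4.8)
The plan is to establish uniqueness and existence separately, in both cases leveraging the well-posedness of the backward conservative problem \eqref{conservative backward} in the class of reversible solutions provided by Theorem \ref{thm:reversible wellposedness}, which is the dual object built into Definition \ref{def:duality sol}.

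For uniqueness I would argue as follows. Suppose $v_1,v_2\in\mathcal{S}_{BV}$ are duality solutions to \eqref{linear transport subsec} with $v_1(0,\cdot)=v_2(0,\cdot)=v_0$, and set $w=v_1-v_2$, which is again a duality solution with $w(0,\cdot)=0$. Fix $\tau\in(0,T]$ and an arbitrary compactly supported $\pi^\tau\in L^\infty_{loc}(\R^N)$. By Theorem \ref{thm:reversible wellposedness} there is a unique reversible solution $\pi\in C([0,\tau],L^\infty_{loc}(\R^N)-w^\ast)$ to \eqref{conservative backward} on $[0,\tau]$ with $\pi(\tau,\cdot)=\pi^\tau$; since $a\in L^\infty$ the support propagates at finite speed, so $\pi(t,\cdot)$ stays compactly supported. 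Definition \ref{def:duality sol} then makes $t\mapsto\int_{\R^N}w(t,x)\pi(t,x)\,dx$ constant on $[0,\tau]$; its value at $t=0$ is $0$ because $w(0,\cdot)=0$, whence $\int_{\R^N}w(\tau,x)\pi^\tau(x)\,dx=0$. Letting $\pi^\tau$ range over all compactly supported elements of $L^\infty_{loc}$ forces $w(\tau,\cdot)=0$ a.e., and arbitrariness of $\tau$ gives $v_1=v_2$.

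For existence I would proceed by approximation. Mollify the coefficient to obtain smooth fields $a_\eps\to a$ (locally in $L^1$, bounded in $L^\infty$) still satisfying the OSLC condition \eqref{OSLC} with $\alpha_\eps\to\alpha$ in $L^1(0,T)$. For each smooth $a_\eps$ the equation has the classical transported solution $v_\eps(t,x)=v_0(Y_\eps(0;t,x))$ along the characteristic flow $Y_\eps$. The decisive a priori estimates are a uniform bound $\|v_\eps(t,\cdot)\|_{L^\infty}\le\|v_0\|_{L^\infty}$ and a uniform local $BV$ bound: the one-sided control \eqref{OSLC} bounds the compression of the flow and yields a Gronwall-type estimate of the form $\operatorname{TV}(v_\eps(t,\cdot))\le e^{\int_0^t\alpha_\eps(s)\,ds}\,\operatorname{TV}(v_0)$ on each compact set. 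Together with the equation these give time-equicontinuity, and Helly's compactness theorem produces a subsequence converging in $C([0,T],L^1_{loc}(\R^N))$ to some $v\in\mathcal{S}_{BV}$ with $v(0,\cdot)=v_0$.

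It then remains to verify that $v$ is a duality solution. For smooth $a_\eps$ and any reversible (here, classical) solution $\pi_\eps$ of the corresponding conservative problem \eqref{conservative backward}, a direct integration by parts shows $t\mapsto\int v_\eps\pi_\eps\,dx$ is constant; one passes to the limit using the strong $L^1_{loc}$ convergence of $v_\eps$ against the weak-$\ast$ convergence of $\pi_\eps$ to the reversible solution $\pi$ of the limiting problem, the latter being the stability statement for reversible solutions from \cite{bouchut2005uniqueness}. I expect the main obstacle to be exactly this limit passage together with the uniform $BV_{loc}$ bound underlying it: because $a$ is only one-sided Lipschitz, forward characteristics may merge and the flow is genuinely irreversible, so the OSLC bound must be used essentially both to control the total variation and to guarantee that the weak-$\ast$ limit of the approximate conservative solutions is the \emph{reversible} solution of the limit problem rather than a spurious one. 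Matching the correct weak-$\ast$ limit with the reversible solution hinges on the vanishing of $\det(\nabla_x X^\tau)$ on the set where the flow is non-unique, as recorded after Definition \ref{def: reversible sol}, and this is the technical heart of the argument.
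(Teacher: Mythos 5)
This statement is not proved in the paper at all: it is quoted verbatim as Theorem~4.3 of Bouchut--James--Mancini \cite{bouchut2005uniqueness}, so there is no internal proof to compare against; your proposal is therefore measured against the cited source.

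Your uniqueness argument is complete and correct, and it is essentially the argument in \cite{bouchut2005uniqueness}: the definition of duality solution is linear, so the difference $w$ is a duality solution with zero data, and testing against the reversible solution with arbitrary compactly supported terminal data $\pi^\tau$ (which exists by Theorem~\ref{thm:reversible wellposedness} and stays compactly supported by finite speed of propagation) kills $w(\tau,\cdot)$. The only point worth making explicit is that the bracket must be continuous down to $t=0$ so that its constant value can be evaluated there; this is where $\alpha\in L^1(0,T)$ in \eqref{OSLC} is used (and is precisely what fails in the $\alpha(t)=C/t$ setting treated elsewhere in the paper).

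The existence half is a correct strategy but defers the two genuinely hard steps. First, the uniform $BV_{loc}$ bound for $v_\eps(t,\cdot)=v_0(Y_\eps(0;t,\cdot))$: in one dimension this follows from monotonicity of the backward flow, but in several dimensions bounding $|D(v_0\circ\Phi)|$ for a merely Lipschitz $\Phi$ and $v_0\in BV_{loc}$ requires an area-formula argument, and the exponent in your Gronwall factor should reflect the $(N-1)$-dimensional nature of the level sets; this is not automatic and is one of the places where \cite{bouchut2005uniqueness} works harder (their construction goes directly through the transport flow and its Jacobian rather than through mollification of the solution formula). Second, the limit passage: you need that \emph{every} reversible solution $\pi$ of the limit problem is the weak-$\ast$ limit of the classical solutions $\pi_\eps$ with the same terminal data, which is exactly the stability theorem for reversible solutions and rests on the vanishing of $\det(\nabla_x X^\tau)$ where the flow is non-unique. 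You correctly identify both obstacles, but as written they are assertions rather than proofs; since the paper itself imports the theorem wholesale, this level of sketch is reasonable, but a self-contained proof would have to supply both.
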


In order to state the  stability result, let us consider a sequence of coefficients $a_n$ such that
\begin{equation}\label{a_n}
a_n \ \text{is uniformly bounded in} \ L^\infty ((0,T)\times \R^N),
\end{equation}
and
\begin{equation}\label{a_n OSCL}
a_n \ \text{satisfies the OSLC condition \eqref{OSLC} for some $\alpha_n$ bounded in $L^1(0,T)$}.
\end{equation}
Note that \eqref{a_n} and \eqref{a_n OSCL} imply that, after the extraction of a subsequence, we have that there exists $a \in L^\infty ((0,T)\times \R^N)$ such that
\begin{equation}\label{a_n limit}
a_n \rightharpoonup a \quad \text{in} \ L^\infty((0,T)\times \R^N) - w^\ast.
\end{equation}
Moreover, in view of Lemma 2.1 in \cite{bouchut2005uniqueness}, we deduce that the limit coefficient  $a$ also satisfies the OSLC condition \eqref{OSLC}.

\begin{theorem}[Theorem 5.2 from \cite{bouchut2005uniqueness}]
\label{thm:weak stability duality}
Let $T>0$, assume \eqref{a_n}--\eqref{a_n limit}, and let $v^0_n$ be a bounded sequence in $BV_{loc}(\R^N)$ such that $v_n^0\to v_0$ in $L_{loc}^1(R^N)$.
Then the duality solution $v_n$ to
$$
\partial_t v_n + a_n \cdot \nabla_x v_n = 0 \quad \text{in} \ (0,T)\times \R^N, \qquad v_n(0,\cdot) =  v_n^0
$$
converges in $C([0,T], L^1_{loc}(\R^N))$ to a duality solution to
$$
\partial_t v + a \cdot \nabla_x v = 0 \quad \text{in} \ (0,T)\times \R^N, \qquad v(0,\cdot) = v^0.
$$
\end{theorem}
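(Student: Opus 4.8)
The plan is to prove stability by a compactness-and-identification argument, closed off by the uniqueness part of Theorem \ref{thm:duality wellposedness}. First I would record the \emph{a priori} bounds that the duality solutions $v_n$ inherit from the well-posedness theory: since each $a_n$ satisfies the OSLC condition \eqref{OSLC} with $\alpha_n$ bounded in $L^1(0,T)$, and the data $v_n^0$ are bounded in $BV_{loc}(\R^N)$, the construction behind Theorem \ref{thm:duality wellposedness} yields a uniform $L^\infty$ bound on $v_n$ together with a uniform bound on $v_n(t,\cdot)$ in $BV_{loc}(\R^N)$, with constants depending only on $\sup_n \| \alpha_n \|_{L^1(0,T)}$, $\sup_n \| a_n \|_\infty$ and the data. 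In addition, testing the transport equation against the uniformly bounded $a_n$ gives equicontinuity in time, namely a modulus of continuity for $t \mapsto v_n(t,\cdot)$ in $L^1_{loc}(\R^N)$ that is uniform in $n$.

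With these bounds in hand, the second step is compactness. The uniform $BV_{loc}$ bound together with the uniform time-modulus lets me apply a Helly/Arzel\`a--Ascoli type argument (compact embedding $BV_{loc} \hookrightarrow L^1_{loc}$ in space, plus equicontinuity in time) to extract a subsequence, not relabelled, with $v_n \to v$ in $C([0,T], L^1_{loc}(\R^N))$ for some $v \in \mathcal{S}_{BV}$. By $v_n^0 \to v_0$ in $L^1_{loc}(\R^N)$, the limit satisfies $v(0,\cdot) = v_0$.

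The heart of the proof is to identify $v$ as a duality solution of the limit equation with coefficient $a$. Fix $0 < \tau \le T$ and a reversible solution $\pi \in C([0,\tau], L^\infty_{loc}(\R^N)-w^\ast)$ with compact support to the backward conservative equation \eqref{conservative backward} driven by $a$; I must show $t \mapsto \int_{\R^N} v(t,x)\pi(t,x)\, dx$ is constant on $[0,\tau]$. The strategy is to approximate $\pi$ by reversible solutions $\pi_n$ of \eqref{conservative backward} driven by $a_n$ sharing the terminal datum $\pi^T = \pi(\tau,\cdot)$. For each $n$, Definition \ref{def:duality sol} applied to $v_n$ makes $t \mapsto \int v_n \pi_n$ constant on $[0,\tau]$; passing to the limit, using $v_n \to v$ strongly and $\pi_n \to \pi$ weakly-$\ast$, transfers constancy to $t \mapsto \int v \pi$. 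The stability $\pi_n \to \pi$ is obtained through the backward transport flows $X_n^\tau$ of Definition \ref{def: backward flow}: since the $a_n$ are uniformly bounded and satisfy OSLC uniformly, the $X_n^\tau$ are equi-Lipschitz and converge, up to a further subsequence, to a transport flow $X^\tau$ for $a$, and the reversible solutions are reconstructed via $\pi_n(t,x) = \pi^T(X_n^\tau(t,x)) \det(\nabla_x X_n^\tau(t,x))$ as in Theorem \ref{thm:reversible wellposedness}.

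The main obstacle I expect is precisely the stability of the Jacobian determinant, i.e. $\det(\nabla_x X_n^\tau) \to \det(\nabla_x X^\tau)$ in the weak-$\ast$ sense needed to pass to the limit in the pairing. The flows are only Lipschitz and their distributional Jacobians are merely bounded, possibly degenerating exactly on the sets where the limit flow is non-unique, so one cannot naively differentiate the weak-$\ast$ limit of $X_n^\tau$; this is the delicate stability argument at the core of \cite{bouchut2005uniqueness}, where the uniform OSLC bound is what controls the sign and concentration of $\det(\nabla_x X_n^\tau)$ and guarantees that the reconstructed $\pi_n$ remain reversible solutions in the limit. Once $v$ is shown to be a duality solution to the limit equation with datum $v_0$, the uniqueness statement of Theorem \ref{thm:duality wellposedness} forces every subsequential limit to coincide with the unique such solution, so the full sequence converges and the proof is complete.
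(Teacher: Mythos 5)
This statement is not proved in the paper at all: it is quoted verbatim as Theorem~5.2 of \cite{bouchut2005uniqueness}, so there is no in-paper argument to compare yours against. Judged on its own, your sketch reproduces the architecture of the proof in that reference: uniform $L^\infty$ and $BV_{loc}$ bounds plus a uniform time modulus give compactness in $C([0,T],L^1_{loc})$; the limit is identified as a duality solution by pairing $v_n$ against reversible solutions $\pi_n$ of the backward conservative equation driven by $a_n$ with a common terminal datum and passing to the limit in the (strong)--(weak-$\ast$) pairing; uniqueness from Theorem~\ref{thm:duality wellposedness} then upgrades subsequential convergence to convergence of the whole sequence. That is the right skeleton, and you correctly locate the decisive difficulty.

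The gap is that the decisive difficulty is exactly what you do not prove. The statement ``$\pi_n\rightharpoonup\pi$ where $\pi$ is \emph{the} reversible solution for $a$'' is the stability theorem for reversible solutions (the analogue of Theorem~5.1 in \cite{bouchut2005uniqueness}), and it does not follow from equi-Lipschitz compactness of the flows $X_n^\tau$: a weak-$\ast$ limit of Lipschitz maps need not have Jacobian determinant equal to the limit of the determinants, and the limit flow need not \emph{a priori} be a transport flow in the sense of Definition~\ref{def: backward flow} (one must recover $\det(\nabla_x X^\tau)\ge 0$ and, more importantly, identify the limit of $\pi^T(X_n^\tau)\det(\nabla_x X_n^\tau)$ with $\pi^T(X^\tau)\det(\nabla_x X^\tau)$, which is where the uniform OSLC bound and the fact that all transport flows for $a$ share the same Jacobian are really used). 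You name this obstacle honestly, but you then defer it back to the reference rather than resolving it, so as a standalone proof the argument is incomplete at precisely the nontrivial step. A secondary, minor point: the uniform time modulus for $v_n$ in $L^1_{loc}$ cannot be obtained by literally ``testing the equation against $a_n$,'' since $a_n\cdot\nabla_x v_n$ is a product of an $L^\infty$ function with a measure; it has to be extracted from the duality/flow representation of $v_n$ together with the uniform bounds on $\|a_n\|_\infty$ and on the local total variation.
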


\subsection{Semiconcavity estimate}
\label{subsec:semiconcavity estimates}

In this subsection we recall a fundamental property of the viscosity solutions to Hamilton-Jacobi equations of the form \eqref{HJ eq}, which implies that the solution is semiconcave with linear modulus and constant $C/t$, for some $C>0$. This property implies in particular that the transport coefficient $a$ defined in \eqref{transport-coef thm} satisfies the OSLC condition \eqref{OSLC OSLC}, which is a key feature in the proof of Theorem \ref{thm:gateaux deriv transport}.
Let us recall that, under the hypotheses \eqref{general Hcond},\eqref{H bounded}, \eqref{H lipschitz in x} and \eqref{H lipschitz in p} on $H$,  for any initial condition $u_0\in \Lip(\R^N)$, there exists a unique viscosity solution to \eqref{HJ eq} satisfying $u(0,\cdot) = u_0$, and moreover, this solution actually coincides with the value function of an optimal control problem as follows:
\begin{equation}\label{value function general H OSLC}
u(t,x) = 
\inf_{\substack{\xi\in W^{1,1}(0,t;\R^N) \\
\xi(t) = x}}
\left\{
\int_0^t H^\ast (\xi(s),\, \dot{\xi}(s)) ds + u_0 (\xi(0)) \right\},
\end{equation}
where $H^\ast: \R^N\times \R^N\to \R$ is defined as the Legendre-Fenchel transform of $H(x,\cdot)$, i.e.
$$
H^\ast (x,q) = \max_{p\in \R^N} \left\{ p\cdot q - H(x,p)  \right\} \qquad \forall x\in \R^N.
$$
It is also well-known (see for instance Theorem A.2.6 and Corollary A.2.7 in \cite{cannarsa2004semiconcave}) that the hypotheses \eqref{general Hcond}, \eqref{H bounded} and \eqref{H lipschitz in x} on $H$ imply the following properties on $H^\ast$:
\begin{eqnarray}
&& H^\ast \in C^2(\R^N\times\R^N), \quad 0 <  H_{qq}^\ast (x,q) \leq \dfrac{1}{c_0} \  \forall x,q\in \R^N, \quad
\lim_{|q|\to \infty} \dfrac{H^\ast (x,q)}{|q|} = +\infty, \  \forall x\in \R^N \label{general H star properties} \\
&& H^\ast (x,q) \geq -C \quad \forall (x,q)\in \R^{2N} \quad \text{and} \quad H^\ast (x,0) \leq C \ \forall x\in \R^N \label{H star bounded} \\
&& | H^\ast (x,q) - H^\ast (y,q) | \leq C_{\Lip}' |x-y| \qquad \forall x,y,q\in \R^N, \quad \text{for some} \ C_{\Lip}' >0. \label{H star Lipschitz}
\end{eqnarray}

Analogously to the formula \eqref{value function general H OSLC}, for the forward viscosity solution, for any given terminal condition $u_T\in \Lip(\R^N)$, the unique backward viscosity solution $w\in \Lip([0,T]\times\R^N)$ to \eqref{HJ eq} satisfying $w(T,\cdot) = u_T$ can be given as the value function of a maximization problem as follows:
\begin{equation}\label{value function backward general H OSLC}
w(t,x) = 
\sup_{\substack{\xi\in W^{1,1}(t,T;\R^N) \\
\xi(t) = x}}
\left\{
-\int_t^T H^\ast (\xi(s),\, \dot{\xi}(s)) ds + u_T (\xi(T)) \right\}.
\end{equation}
See \cite{barron1999regularity} for more details on backward viscosity solutions.

Using the representation formula \eqref{value function general H OSLC} and the properties of $H^\ast$ in \eqref{general H star properties}, \eqref{H star bounded}, \eqref{H star Lipschitz}, it is possible to prove that the forward viscosity solution defined in \eqref{value function general H OSLC} is a semiconcave function, and the backward viscosity solution defined in \eqref{value function backward general H OSLC} is semiconvex. 

Let us recall here the definition of semiconcavity and semiconvexity with linear modulus.

\begin{definition}\label{def:semiconcavity}
\begin{enumerate}
\item A continuous function $f:\R^N\to \R$ is semicontinuous with linear modulus if there exists a constant $C>0$ such that
$$
f(x+h) + f(x-h) -2u(x) \leq C|h|^2 \quad \forall x,h\in \R^N.
$$
When this property holds,we say that $C$ is the semiconcavity constant.
\item We say that $f$ is semiconvex with linear modulus and constant $C>0$ if the function $x\mapsto -f(x)$ is semiconcave with linear modulus and constant $C$. 
\end{enumerate}
\end{definition}

\begin{remark}\label{rmk: semiconcavity}
It is easy to see that a function $f:\R^N\to \R$ is semiconcave (resp. semiconvex) with linear modulus and constant $C>0$ if and only if the function
$$
\tilde{f}(x) : = f(x) - \dfrac{C}{2}|x|^2 \quad \left( \text{resp.} \  \tilde{f}(x) := f(x) + \dfrac{C}{2}|x|^2 \right)
$$
is concave (resp. convex).
This implies that $\tilde{f}$ is locally Lipschitz and satisfies
$$
\left\langle \nabla \tilde{f}(y) - \nabla \tilde{f}(x), \, y-x \right\rangle \leq 0 \qquad \text{for a.e.} \  x,y \in \R^N,
$$
which then yields the one-sided-Lipschitz estimate
$$
\left\langle \nabla f(y) - \nabla f(x), \, y-x\right\rangle \leq C | y-x |^2, \qquad \text{for a.e.} \  x,y\in \R^N.
$$
\end{remark}

Although it is a well-known property, we give here a short proof of the semiconcavity and semiconvexity estimates for the forward and backward viscosity solutions \eqref{value function general H OSLC} and \eqref{value function backward general H OSLC}.
For further results regarding the regularity of the viscosity solutions to Hamilton-Jacobi equations, we refer the reader to \cite{bianchini2012sbv, cannarsa2014pointwise,  cannarsa1997regularity, lions2020new}.

\begin{proposition}\label{prop: semiconcavity property}
Let $T>0$ and let $H:\R^N\times\R^N\to \R$ be a given  Hamiltonian satisfying \eqref{general Hcond}, \eqref{H bounded}, \eqref{H lipschitz in x} and \eqref{H lipschitz in p}. Then, we have the following
\begin{enumerate}
\item For any $t\in (0,T]$ and $u_0\in \Lip(\R^N)$, the value function $u(t,\cdot)$ defined in \eqref{value function general H OSLC} is semiconcave with linear modulus and constant 
$\dfrac{C}{t}$, for some constant $C>0$ depending only on $H^\ast$.
\item For any $t\in [0,T)$ and $u_T\in \Lip(\R^N)$, the value function $w(t,\cdot)$ defined in \eqref{value function backward general H OSLC} is semiconvex with linear modulus and constant 
$\dfrac{C}{T-t}$, for some constant $C>0$ depending on $H^\ast$.
\end{enumerate}
\end{proposition}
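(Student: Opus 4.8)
The plan is to prove the forward estimate (i) by the classical doubling (straight-line interpolation) trick applied to the variational representation \eqref{value function general H OSLC}, and then to obtain (ii) by running the same argument backward in time on \eqref{value function backward general H OSLC}.

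First I would fix $t\in(0,T]$ and $x,h\in\R^N$. By the superlinear growth of $H^*(x,\cdot)$ in \eqref{general H star properties} together with the convexity of $H^*$ in $q$ (hence weak lower semicontinuity of the action), the infimum in \eqref{value function general H OSLC} is attained at some $\xi\in C^1([0,t];\R^N)$ with $\xi(t)=x$; moreover, since $u_0\in\Lip(\R^N)$, the associated dual arc is bounded, so by \eqref{H lipschitz in p} the optimal velocity $\dot\xi(s)$ remains in a fixed compact set. I then build competitors for the two neighbouring endpoints that keep the \emph{same} initial point,
\[
\xi^{\pm}(s):=\xi(s)\pm\tfrac{s}{t}\,h,\qquad \xi^{\pm}(0)=\xi(0),\quad \xi^{\pm}(t)=x\pm h,\quad \dot\xi^{\pm}(s)=\dot\xi(s)\pm\tfrac1t h .
\]
Inserting $\xi^{\pm}$ into the infimum \eqref{value function general H OSLC} for $u(t,x\pm h)$, adding the two inequalities and subtracting $2u(t,x)$, the contributions $u_0(\xi^{\pm}(0))=u_0(\xi(0))$ cancel exactly, leaving
\[
u(t,x+h)+u(t,x-h)-2u(t,x)\le\int_0^t\Big[H^*\big(\xi+\tfrac st h,\dot\xi+\tfrac1t h\big)+H^*\big(\xi-\tfrac st h,\dot\xi-\tfrac1t h\big)-2H^*(\xi,\dot\xi)\Big]\,ds .
\]

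It then remains to bound the integrand pointwise, which I would write as the second difference $g(1)+g(-1)-2g(0)$ of $g(\theta):=H^*(\xi(s)+\theta\tfrac st h,\dot\xi(s)+\theta\tfrac1t h)$, using $H^*\in C^2$. The dominant, singular contribution is the pure velocity term: by the uniform bound $H^*_{qq}\le\tfrac1{c_0}I_N$ from \eqref{general H star properties} it is at most $\tfrac1{c_0t^2}|h|^2$, and integration over $(0,t)$ produces exactly the $\tfrac{C}{t}|h|^2$ behaviour. The remaining position and mixed contributions carry the factor $\pm\tfrac st h$, which vanishes at $s=0$ and equals $\pm h$ at $s=t$, so they integrate to an $O(|h|^2)$ quantity; since $t\le T$ this is absorbed into $\tfrac{C}{t}|h|^2$ with $C$ depending on $c_0$, $T$ and the regularity of $H^*$. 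This yields (i). Equivalently, I could keep $x$ as a free parameter and view $u(t,\cdot)$ as an infimum over a family of maps $x\mapsto\int_0^t H^*+u_0(\xi(0))$, each semiconcave with the same constant, and invoke that an infimum of uniformly semiconcave functions is semiconcave.

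The main obstacle is precisely the control of these position and mixed second differences: the hypotheses supply only a \emph{global} bound on $H^*_{qq}$ and a \emph{first-order} Lipschitz bound of $H^*$ in $x$ through \eqref{H star Lipschitz}, so the estimate must be closed by combining the $C^2$ regularity of $H^*$ with the compactness of the optimal velocity, and this is where the cancellation between the $+h$ and $-h$ competitors has to be exploited carefully. Finally, part (ii) follows by the mirror-image argument: for $w$ given by the maximization \eqref{value function backward general H OSLC} I fix the \emph{terminal} point and take the competitors $\zeta^{\pm}(s):=\zeta(s)\pm\tfrac{T-s}{T-t}h$, where $\zeta$ is a maximizer with $\zeta(t)=x$, so that $u_T(\zeta^{\pm}(T))$ cancels and $\dot\zeta^{\pm}=\dot\zeta\mp\tfrac1{T-t}h$. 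The sign reversal in front of the action in \eqref{value function backward general H OSLC} turns the upper bound on the second difference into a \emph{lower} bound for $w$, i.e. a semiconvexity estimate, while the velocity factor $\pm\tfrac{1}{T-t}h$ now produces the constant $\tfrac{C}{T-t}$ after integrating over $(t,T)$.
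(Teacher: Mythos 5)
Your proposal is correct and follows essentially the same route as the paper: both fix an optimal trajectory for $u(t,x)$, perturb it by the straight-line interpolation $\pm\frac{s}{t}h$ so that the initial point (and hence the $u_0$ term) is unchanged, and then Taylor-expand $H^\ast$ along the perturbed arc, with the uniform bound $H^\ast_{qq}\leq \frac{1}{c_0}I_N$ from \eqref{general H star properties} producing the singular $\frac{C}{t}|h|^2$ term and the remaining $x$-dependent contributions absorbed using the $C^2$ regularity of $H^\ast$ on the relevant compact sets. The only cosmetic difference is that you work with the symmetric second difference $u(t,x+h)+u(t,x-h)-2u(t,x)$ directly, whereas the paper perturbs to a single nearby endpoint $y$ and concludes via the viscosity inequality $D^2u(t,\cdot)\leq \frac{C}{t}$; these are equivalent formulations of semiconcavity with linear modulus, and the backward/semiconvex case is handled by the same mirror argument in both.
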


\begin{proof}
We only give the proof of the semiconcavity estimate for \eqref{value function general H OSLC}, since the semiconvexity  estimate for \eqref{value function backward general H OSLC} can be proved analogously.

Let $x\in \R^N$ and $t>0$ be fixed.  By the properties of $H^\ast$ in \eqref{general H star properties}, \eqref{H star bounded} and \eqref{H star Lipschitz},  we can use the direct method of calculus of variations to prove the existence of an arc $\xi_{t,x} \in C([0,t];\R^N)$,  satisfying $\xi_{t,x}(t) = x$,  such that
\begin{equation}\label{equality semiconcavity proof}
u(t,x) = \int_0^t H^\ast (\xi_{t,x}(s), \dot{\xi}_{t,x} (s)) ds + u_0 (\xi_{t,x}(0)).
\end{equation}
Now, for any $y\in \R^N$, let us set the arc $\tilde{\xi}_{t,x} \in C([0,t];\R^N)$ defined by
$$
\tilde{\xi}_{t,x} (s) = \xi_{t,x}(s) + \dfrac{y-x}{t}s \qquad \text{for} \ s\in [0,t].
$$
Note that $\tilde{\xi}_{t,x}$ satisfies
$$
\tilde{\xi}_{t,x} (t) = y, \qquad \tilde{\xi}_{t,x}(0) = \xi_{t,x}(0) \quad \text{and}\quad \dot{\tilde{\xi}}_{t,x} (s) = \dot{\xi}_{t,x} (s) + \dfrac{y-x}{t}.
$$
Then, by  \eqref{value function general H OSLC}, we have
\begin{equation}\label{ineq semiconcavity proof}
u(t,y) \leq \int_0^t H^\ast \left( \xi_{t,x} (s) + \dfrac{y-x}{t}s,  \dot{\xi}_{t,x} (s) + \dfrac{y-x}{t} \right) ds + u_0 (\xi_{t,x}(0)).
\end{equation}

In view of the properties  \eqref{general H star properties}--\eqref{H star Lipschitz} on $H^\ast$,  for any $\varepsilon>0$ small,  there exists a constant $K>0$ depending on $\xi_{t,x}$ and  $\varepsilon$ such that
\begin{eqnarray*}
H^\ast \left( \xi_{t,x} (s) + \dfrac{y-x}{t}s,  \dot{\xi}_{t,x} (s) + \dfrac{y-x}{t} \right) & \leq &  H^\ast (\xi_{t,x}(s), \dot{\xi}_{t,x} (s)) + \dfrac{C_1}{t} 
\langle H^\ast_x (\xi_{t,x}(s), \dot{\xi}_{t,x} (s)) , \, y-x \rangle \\
& & + \dfrac{C_2}{t} \langle H^\ast_p (\xi_{t,x}(s), \dot{\xi}_{t,x} (s)) ,\,  y-x \rangle
+ \dfrac{C_3}{t^2} | x-y|^2  
\end{eqnarray*}
for all $y\in B(0,\varepsilon)$ and $s\in [0,t]$,  where $C_1,C_2,C_3>0$ are three constants depending on $H$.
Hence, combining this estimate with \eqref{equality semiconcavity proof} and \eqref{ineq semiconcavity proof}, we obtain
\begin{eqnarray*}
u(t,y) &\leq & \displaystyle\int_0^t H^\ast (\xi_{t,x}(s), \dot{\xi}_{t,x}(s)) ds + u_0 (\xi_{t,x} (0)) + \Lambda \cdot (y-x) + \dfrac{C_3}{t} |x-y|^2 \\
&=& u(t,x)  + \Lambda \cdot (y-x) + \dfrac{C_3}{t} |x-y|^2,
\end{eqnarray*}
for some vector $\Lambda\in \R^N$.
This implies that $u(t,\cdot)$ satisfies the inequality
$$
D^2 u(t,x) \leq \dfrac{C_3}{t}
$$
in the viscosity sense, which in turn implies the semiconcavity of $u(t,\cdot)$ with linear modulus and constant $C_3/t$.
\end{proof}

Combining Proposition \ref{prop: semiconcavity property} with the
hypotheses made on the Hamiltonian, we can deduce that, in some cases, the semiconcavity of the viscosity solution induces the OSLC \eqref{OSLC OSLC} on the transport coefficient $a:[0,T] \times \R^N \rightarrow \R^N$, as defined in \eqref{transport-coef thm}, i.e.
$$
a (t,x) = H_p (x, \nabla_x [S_t^+ u_0 (x)]),
$$ 
with a constant $C$ depending only on $H$ and $u_0$.

\begin{corollary}
\label{cor: OSLC}
Let $T>0$, $u_0, w \in \Lip(\R^N)$, and let, either $N=1$ and $H$ be a Hamiltonian satisfying \eqref{general Hcond}, \eqref{H bounded}, \eqref{H lipschitz in x} and \eqref{H lipschitz in p}, or let $N>1$ and $H$ be of the form \eqref{quadratic Hamiltonian section}.
Then the function $a(t,x)$ defined in \eqref{transport-coef thm} satisfies the OSLC \eqref{OSLC OSLC} with a constant $C>0$ depending only on $H$ and $u_0$.
\end{corollary}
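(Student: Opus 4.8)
The plan is to start from the semiconcavity estimate and then transfer its consequence on $\nabla u$ through the map $p\mapsto H_p(x,p)$, handling the two admissible cases from the same starting point. By Proposition \ref{prop: semiconcavity property}, for each $t\in(0,T]$ the function $S_t^+u_0=u(t,\cdot)$ is semiconcave with linear modulus and constant $C/t$ with $C=C(H,u_0)>0$, and by Remark \ref{rmk: semiconcavity} this is equivalent to the one-sided estimate on the gradient
\begin{equation}
\langle \nabla u(t,y)-\nabla u(t,x),\, y-x\rangle \leq \frac{C}{t}\,|y-x|^2 \qquad \text{for a.e. } x,y\in\R^N. \tag{$\star$}
\end{equation}
I would also invoke the uniform-in-$t$ Lipschitz bound on $u(t,\cdot)$ (the same a priori bound used in the proof of Proposition \ref{prop: differentiability general}), so that $|\nabla u(t,x)|\leq R$ a.e.\ with $R=R(H,u_0)$; only the values of $H_p(x,\cdot)$ on $|p|\leq R$ are then relevant. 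In the quadratic case $H(x,p)=|p|^2+f(x)$ one has $H_p(x,p)=2p$, linear and independent of $x$, hence $a(t,x)=2\nabla u(t,x)$ and \eqref{OSLC OSLC} follows immediately from $(\star)$ with constant $2C$.

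The substance lies in the one-dimensional case, where $H_p$ genuinely depends on $x$. Writing $p(\cdot)=\partial_x u(t,\cdot)$ and taking $y>x$ (the opposite case is symmetric), I would split
$$a(t,y)-a(t,x)=\underbrace{\bigl[H_p(y,p(y))-H_p(x,p(y))\bigr]}_{\mathrm{I}}+\underbrace{\bigl[H_p(x,p(y))-H_p(x,p(x))\bigr]}_{\mathrm{II}}.$$
For term $\mathrm{I}$, the fundamental theorem of calculus and the boundedness of $H_{px}$ on $\R\times[-R,R]$ give $|\mathrm{I}|\leq M_{px}\,|y-x|$, contributing $M_{px}(y-x)^2\leq M_{px}T\cdot\tfrac{1}{t}(y-x)^2$. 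For term $\mathrm{II}$, since $H_{pp}(x,\cdot)>0$ by \eqref{general Hcond} the function $H_p(x,\cdot)$ is increasing: if $p(y)\leq p(x)$ then $\mathrm{II}\leq 0$, while if $p(y)>p(x)$ then $(\star)$ gives $p(y)-p(x)\leq\tfrac{C}{t}(y-x)$ and the mean value theorem yields $\mathrm{II}\leq M_{pp}\,(p(y)-p(x))\leq M_{pp}\tfrac{C}{t}(y-x)$, where $M_{pp}=\sup_{x,\,|q|\leq R}H_{pp}(x,q)$. Multiplying by $(y-x)>0$ and adding produces the scalar form of \eqref{OSLC OSLC}, with a constant depending only on $H$ and $u_0$ through $R$, $C$, $M_{px}$, $M_{pp}$.

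The main obstacle, and the reason the statement is confined to these two cases, is precisely the passage through $H_p(x,\cdot)$ in term $\mathrm{II}$. In dimension one $H_{pp}$ is a scalar, so the monotonicity of $H_p(x,\cdot)$ converts the one-sided bound $(\star)$ on $\nabla u$ into a one-sided bound on $a$. In several dimensions the analogous contribution is $H_p(x,p(y))-H_p(x,p(x))\approx H_{pp}(x)\,(p(y)-p(x))$, and its pairing with $y-x$ is $\langle H_{pp}(x)(p(y)-p(x)),\,y-x\rangle$; estimate $(\star)$ only controls $\langle p(y)-p(x),\,y-x\rangle$, and inserting a general positive-definite matrix $H_{pp}(x)$ destroys this control unless $H_{pp}(x)$ is a scalar multiple of the identity, which is exactly the quadratic case. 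A secondary, purely technical point to verify is that $H_{px}$ and $H_{pp}$ are bounded on $\R^N\times\overline{B_R}$ uniformly in $x$, which is where the regularity hypotheses \eqref{general Hcond}--\eqref{H lipschitz in p} enter, together with the fact that the relevant gradients range over a fixed compact set.
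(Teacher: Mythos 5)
Your proof is correct and follows essentially the same route as the paper's: the quadratic case is dispatched via $H_p=2p$ and the semiconcavity estimate, and the one-dimensional case uses the identical decomposition of $a(t,y)-a(t,x)$ into an $x$-increment of $H_p$ (bounded linearly in $y-x$ via the Lipschitz/regularity hypotheses) and a $p$-increment (controlled one-sidedly via monotonicity of $H_p(x,\cdot)$ together with the semiconcavity bound $\partial_x u(t,y)-\partial_x u(t,x)\leq \tfrac{C}{t}(y-x)$). Your explicit case split on the sign of $p(y)-p(x)$ and your closing explanation of why the argument breaks for general convex $H$ in several dimensions are both consistent with, and slightly more detailed than, the paper's presentation.
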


\begin{proof}
In the quadratic case in any space dimenstion,  where $H$ has the form \eqref{quadratic Hamiltonian section}, the result follows directly from Proposition \ref{prop: semiconcavity property}, since
$$
H_p (x,p) = 2p, \qquad \forall p\in \R^N.
$$
In the one-dimensional case in space,  \eqref{OSLC OSLC} can be written as
$$
a(t,y) - a(t,x) \leq \dfrac{C}{t} (y-x) \qquad \forall x,y\in \R, \quad \text{with } \ y \geq x.
$$
We can write
\begin{eqnarray}
a(t,y)-a(t,x) & = & H_p(y, \partial_x [S_t^+u_0] (y)) - H_p( x, \partial_x [S_t^+u_0](x)) \nonumber \\
& = & H_p(y, \partial_x [S_t^+u_0] (y)) -  H_p( x, \partial_x [S_t^+u_0](y)) \\
& & +  H_p( x, \partial_x [S_t^+u_0](y)) -  H_p( x, \partial_x [S_t^+u_0](x)). \label{proof corollary OSLC conclusion}
\end{eqnarray}
Now, on one hand, we can use the Lipschitz hypotheses \eqref{H lipschitz in x} and  \eqref{H lipschitz in p} on the Hamiltonian to deduce
\begin{equation}
\label{proof corollary OSLC part 1}
H_p( x, \partial_x [S_t^+u_0](y)) -  H_p( x, \partial_x [S_t^+u_0](x)) \leq C_H (y-x) \qquad \forall x,y \in \R , \quad \text{with} \ y\geq x,
\end{equation}
for some $C_H$ depending only on $H$.
On the other hand, using Proposition \ref{prop: semiconcavity property}, we deduce that$S_t^+ u_0$ satisfies
$$
\partial_x [S_t^+ u_0] (y) \leq \partial_x [S_t^+ u_0](x) + \dfrac{C}{T} (y-x) \qquad \forall x,y\in \R, \quad \text{with} \ y\geq x,
$$
and exploiting the fact that, by the convexity of $H$, $H_p$ is a monotonically increasing function we obtain
\begin{eqnarray}
 H_p( x, \partial_x [S_t^+u_0](y)) -  H_p( x, \partial_x [S_t^+u_0](x)) & \leq &  H_p \left( x, \partial_x [S_t^+ u_0](x) + \dfrac{C}{T} (y-x) \right) \nonumber \\
 & & -  H_p( x, \partial_x [S_t^+u_0](x)) \nonumber \\
 &\leq & \dfrac{C_{[H,u_0]}\, C}{T} (y-x) \qquad  \forall x,y\in \R, \quad \text{with} \ y\geq x.
 \label{proof corollary OSLC part 2}
\end{eqnarray}
Here we used the $C^2$ regularity of the Hamiltonian, which implies that $H_p$ is locally Lipschitz, then we can choose $C_{[H,u_0]}$ depending on the Lipschitz constant of $S_t^+ u_0$, which depends only on $H$ and $u_0$.
The conclusion follows by combining \eqref{proof corollary OSLC part 1}, \eqref{proof corollary OSLC part 2} and \eqref{proof corollary OSLC conclusion}.
\end{proof}

\subsection{Proof of Theorem \ref{thm:gateaux deriv transport}}
\label{subsec: proof linearization}
In this subsection we give the proof of Theorem \ref{thm:gateaux deriv transport}, which relies on the well-posedness of the transport equation \eqref{transport equation intro}, when the transport coefficient is given by \eqref{transport-coef thm}. 
Since the coefficient $a(t,x)$ given in \eqref{transport-coef thm} satisfies the OSLC condition \eqref{OSLC} with $\alpha(t) = C/t$, which obviously does not belong to $L^1(0,T)$, we cannot directly apply the results in subsection \ref{subsec: duality solutions}. The first step in the proof of Theorem \ref{thm:gateaux deriv transport} is to prove that the limit as $\delta \to 0^+$ of the function
\begin{equation}\label{v delta}
v_\delta (t,x) := \dfrac{S_t^+ (u_0 + \delta w)(x) - S_t^+ u_0 (x)}{\delta} \qquad \text{for} \ \delta>0,
\end{equation} 
 is a duality solution (not necessarily unique) to the forward transport equation
\begin{equation}
\label{transpor-eq epsilon}
\left\{\begin{array}{ll}
\partial_t v + a(t,x)  \nabla_x v = 0 & (t,x) \in (0,T)\times \R^N \\
v(0, x) = w (x) & x\in \R^N.
\end{array}\right.
\end{equation}
Then, we will prove that equation \eqref{transpor-eq epsilon} only admits a unique duality solution. This will be proven as a consequence of Proposition \ref{prop:reversible unique extension}, which ensures that the reversible solutions to the backward conservative equation \eqref{eq: conservative backward opt cond} can be uniquely extended at $t=0$ by a measure.

\begin{proposition}\label{prop: deriv transport eq}
Let $T>0$, $u_0, w \in \Lip(\R^N)$, and let, either $N=1$ and $H$ be a Hamiltonian satisfying \eqref{general Hcond}, \eqref{H bounded}, \eqref{H lipschitz in x} and \eqref{H lipschitz in p}, or let $N>1$ and $H$ be of the form \eqref{quadratic Hamiltonian section} Then  we have that
$$
v_\delta (t,x) \to v(t,x)  \qquad \text{in} \ C([0,T], L^1_{loc} (\R^N))
$$
where $v_\delta(t,x)$ is defined for all $\delta>0$ as in \eqref{v delta}, and  $v(t,x)$ satisfies the transport equation
\eqref{transpor-eq epsilon} with transport coefficient $a(t,x)$ given by \eqref{transport-coef thm} in the duality sense of Definition \ref{def:duality sol}.
\end{proposition}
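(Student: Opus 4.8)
The plan is to recognize $v_\delta$ as the solution of a \emph{linear} transport equation obtained by linearizing the Hamilton-Jacobi equation \eqref{HJ eq} along the segment joining $u=S_t^+u_0$ and $u^\delta=S_t^+(u_0+\delta w)$, and then to pass to the limit $\delta\to0^+$ by means of the weak stability of duality solutions (Theorem \ref{thm:weak stability duality}). Writing both $u$ and $u^\delta$ as viscosity (hence a.e.\ classical) solutions of \eqref{HJ eq}, subtracting the two equations and applying the fundamental theorem of calculus in the $p$-variable, one finds formally that $v_\delta$ solves
\[
\partial_t v_\delta + b_\delta(t,x)\cdot\nabla_x v_\delta = 0,\qquad v_\delta(0,\cdot)=w,
\]
with coefficient $b_\delta(t,x):=\int_0^1 H_p\big(x,\ \nabla_x u(t,x)+s(\nabla_x u^\delta(t,x)-\nabla_x u(t,x))\big)\,ds$. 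Since $\nabla_x u^\delta\to\nabla_x u$ a.e.\ as $\delta\to0^+$ (from the uniform convergence $u^\delta\to u$ of Remark \ref{rmk: S_T^+ Lipschitz} together with the semiconcavity of Proposition \ref{prop: semiconcavity property}), and $H_p$ is continuous and uniformly bounded on the relevant range of gradients (the uniform Lipschitz bound on $u^\delta$ established in the proof of Theorem \ref{thm: differentiability general weak convergence}), dominated convergence yields $b_\delta\rightharpoonup a$ in $L^\infty$--$w^\ast$, with $a$ as in \eqref{transport-coef thm}.

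First I would establish that the coefficients $b_\delta$ satisfy, \emph{uniformly in} $\delta$, the one-sided Lipschitz bound \eqref{OSLC OSLC} with modulus $C/t$. This is precisely where the hypotheses of the statement are used: for quadratic $H$ one has $H_p=2p$, whence $b_\delta=\nabla_x(u^\delta+u)$ is the gradient of a function semiconcave with constant $2C/t$ by Proposition \ref{prop: semiconcavity property}; in the one-dimensional case the splitting used in the proof of Corollary \ref{cor: OSLC}, combined with the monotonicity of $p\mapsto H_p$ and the semiconcavity of both $u$ and $u^\delta$, gives the same bound for the convex combination of gradients defining $b_\delta$. Together with the uniform $L^\infty$ bound on $b_\delta$, this places us in the framework of Subsection \ref{subsec: duality solutions}, with the sole defect that $\alpha(t)=C/t\notin L^1(0,T)$.

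To circumvent this defect I would fix $\tau\in(0,T)$ and work on $[\tau,T]$, where $\alpha(t)=C/t\le C/\tau\in L^1(\tau,T)$ uniformly in $\delta$. On this interval $v_\delta$ is the duality solution (Definition \ref{def:duality sol}) of the transport equation with coefficient $b_\delta$ and Cauchy datum $v_\delta(\tau,\cdot)$; by Theorem \ref{thm: differentiability general weak convergence} this datum converges in $L^1_{loc}$ to $v(\tau,\cdot)=w(\xi_{\tau,\cdot}(0))$ and is bounded in $BV_{loc}$ thanks to the uniform Lipschitz and semiconcavity estimates. Applying Theorem \ref{thm:weak stability duality} with $a_\delta:=b_\delta$ then gives $v_\delta\to v$ in $C([\tau,T],L^1_{loc}(\R^N))$, with $v$ a duality solution of \eqref{transpor-eq epsilon} on $[\tau,T]$; the pointwise-in-$t$ identification $v(t,\cdot)=\partial_w S_t^+u_0$ from Theorem \ref{thm: differentiability general weak convergence} shows the limit does not depend on $\tau$. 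Letting $\tau\to0^+$, and using the trajectory formula with $\xi_{t,x}(0)\to x$ as $t\to0^+$ to recover the continuous trace $v(0,\cdot)=w$, I would upgrade the convergence to $C([0,T],L^1_{loc}(\R^N))$.

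The main obstacle is precisely the non-integrable blow-up of the OSLC modulus at $t=0$, which forbids a single global application of the duality/stability machinery and forces the $[\tau,T]$-localization followed by a limit. The accompanying delicate point is to reconcile the merely $L^\infty_{loc}$ nature of the G\^ateaux derivative $v$ with the $BV$-framework of \cite{bouchut2005uniqueness}, i.e.\ to justify both the uniform $BV_{loc}$ control of the Cauchy data $v_\delta(\tau,\cdot)$ and the claim that $v_\delta$ is \emph{genuinely} a duality solution for $b_\delta$ rather than only a formal distributional one. Finally, I emphasize that this proposition yields only \emph{a} duality solution as the limit; that it is the \emph{unique} one is deferred to Proposition \ref{prop:reversible unique extension}.
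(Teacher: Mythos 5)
Your proposal is correct and follows essentially the same route as the paper: linearize the Hamilton--Jacobi equation a.e.\ to exhibit $v_\delta$ as a Lipschitz a.e.-solution of a transport equation with coefficients converging weak$^\ast$ to $a$, invoke the stability theorem (Theorem \ref{thm:weak stability duality}) on $[\tau,T]$ where the OSLC modulus is integrable, and remove the localization using the $t$-continuity of the limit. Two small remarks: your fundamental-theorem-of-calculus form $b_\delta=\int_0^1 H_p(x,\nabla_x u+s(\nabla_x u_\delta-\nabla_x u))\,ds$ is a cleaner variant of the paper's Taylor-remainder coefficient $a_\delta$, and your explicit verification that the $b_\delta$ satisfy the OSLC \emph{uniformly in $\delta$} is genuinely needed (hypothesis \eqref{a_n OSCL} of the stability theorem concerns the approximating coefficients, a point the paper passes over quickly); conversely, the ``delicate point'' you flag about $v_\delta$ being a genuine duality solution rather than a formal one is settled at once by Lemma \ref{lem: duality solutions}, since $v_\delta$ is locally Lipschitz and solves the equation a.e.
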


Note that, in the definition of duality solution we use, as test functions, reversible solutions $\pi \in C([0,\tau], L_{loc}^\infty (\R^N)-w^\ast)$ to the dual equation \eqref{conservative backward}. 
The fact that the transport coefficient $a(t,x)$ does not satisfy the OSLC condition \eqref{OSLC} with $\alpha\in L^1(0,T)$ implies that for some terminal conditions $\pi^\tau\in L^\infty_{loc}(\R^N)$, a reversible solution $\pi \in C([0,\tau], L_{loc}^\infty (\R^N)-w^\ast)$ satisfying $\pi(\tau, \cdot) = \pi^\tau(\cdot)$ may not exist. However, it does not represent any inconvenient in the definition of duality solution.
Existence of reversible solutions for any terminal condition are necessary to ensure the uniqueness of the duality solution, and this will be done in Proposition \ref{prop:reversible unique extension} by considering measure-valued solutions to \eqref{conservative backward}.

\begin{proof}
For any $\delta>0$ and $(t,x)\in (0,T]\times \R^N$, let us set
$$
u(t,x) = S_t^+ u_0 (x) \qquad \text{and} \qquad
 u_\delta (t,x) = S_t^+(u_0 + \delta w) ( x).
$$
We can then write
$$
v_\delta (t,x) = \dfrac{u_\delta (t,x) - u(t,x)}{\delta}
$$
The fact that, for all $t\in [0,T]$,
\begin{equation*}
v_\delta (t,\cdot) \to v(t,\cdot)  \qquad \text{in} \ L^1_{loc} (\R^N)
\end{equation*}
to some $v(t,\cdot)\in L^\infty_{loc} (\R^N)$ follows from Theorem \ref{thm: differentiability general weak convergence}. 
Moreover, from the representation formula for the limit, given in Theorem \ref{thm: differentiability general weak convergence}, and the regularity of the Hamiltonian, it follows that the map
\begin{equation}
\label{proof prop convergence}
t\in [0,T] \longmapsto v(t,\cdot)\in L^1_{loc} (\R^N)
\end{equation}
is continuous, and therefore,  we have $v\in C([0,T]; L^1_{loc} (\R^N))$.
We now need to prove that the limit $v$ is a duality solution to \eqref{transpor-eq epsilon}.

Since both $u_\delta$ and $u$ are Lipschitz continuous and verify \eqref{HJ eq} almost everywhere, we have that
\begin{equation}\label{v t deriv}
\partial_t  v_\delta (t,x) = -\dfrac{H(x,\nabla_x u_\delta (t,x)) - H(x,\nabla_x u(t,x)))}{\delta} \qquad \text{for a.e. $(t,x)\in (0,T)\times \R^N$.}
\end{equation}

Now, since $H\in C^2(\R^N\times \R^N)$, we can write
$$
H(x, \nabla_x u_\delta) = H (x,\nabla_x u) + (\nabla_x u_\delta - \nabla_x u) \cdot H_p(x,\nabla_x u) + o(|\nabla_x u_\delta - \nabla_x u|).
$$
and combining this with \eqref{v t deriv}, we deduce
$$
\partial_t  v_\delta = -\left( H_p(x,\nabla_x u) + \dfrac{o(|\nabla_x u_\delta - \nabla_x u|)}{|\nabla_x u_\delta - \nabla_x u|^2}( \nabla_x u_\delta - \nabla_x u) \right) \dfrac{\nabla_x u_\delta - \nabla_x u}{\delta}.
$$

We now set the transport coefficient $a_\delta$ as
\begin{equation}\label{a_delta}
a_\delta(t,x) := H_p(x, \nabla_x u) + \dfrac{o(|\nabla_x u_\delta - \nabla_x u|)}{|\nabla_x u_\delta - \nabla_x u|^2}( \nabla_x u_\delta - \nabla_x u).
\end{equation}
Then, since $v_\delta$ is a Lipschitz function and satisfies 
$$
\partial_t v_\delta + a_\delta(t,x)  \nabla_x v_\delta = 0  \qquad \text{for a.e.} \  (t,x) \in (0,T)\times \R,
$$
we deduce from Lemma \ref{lem: duality solutions} that $v_\delta$ is a duality solution for all $\delta\in (0,1)$, with initial condition $v_\delta (0,\cdot) = w(\cdot)$.

Now, let us note that for any $\delta \in (0,1)$,  both functions $u$ and $u_\delta$ are Lipschitz in  $[0, T]\times \R^N$, with a Lipschitz constant depending on $u_0$ and $w$, but independent of $\delta$.
Hence, in view of  \eqref{a_delta}, we have that $a_\delta$ is uniformly bounded in $L^\infty ((0,T)\times\R^N)$.
Moreover, since $\nabla_x u_\delta$ converges to $\nabla_x u$ as $\delta\to 0$  for  a.e.  $(t,x)\in (0,T)\times \R$, we deduce that  $$
a_\delta(t,x) \longrightarrow H_p(x,\nabla_x u) \qquad \text{for a.e. } \  (t,x) \in (0,T)\times\R,
$$ 
implying that 
$$
a_\delta \rightharpoonup a \quad \ \text{as} \ \delta\to 0^+ \quad \text{in} \ L^\infty((0,T)\times\R)-w^\ast.
$$
Therefore, using the stability of the duality solutions from Theorem \ref{thm:weak stability duality}, and the fact that $a$ satisfies OSCL uniformly in $(\varepsilon,T]$, for all $\varepsilon>0$,  we deduce that,
 for all $\varepsilon>0$,
\begin{equation*}
v_\delta \longrightarrow v \quad  \text{as $\delta\to 0$ in} \  C([\varepsilon,T],L_{loc}^1(\R)),
\end{equation*}
where $v$ is a duality solution to \eqref{transpor-eq epsilon} in $(\varepsilon,T]\times \R^N$. 
Finally, by letting $\varepsilon\to 0^+$, and using the continuity from \eqref{proof prop convergence}, we conclude that $v$ is a duality solution to \eqref{transpor-eq epsilon} in $[0,T]\times\R^N$.
\end{proof}

We now need to prove that the limit function from Proposition \ref{prop: deriv transport eq} is actually the unique duality solution to \eqref{transpor-eq epsilon}, which will be deduced as a consequence of the fact that the reversible solutions to the dual problem can be uniquely extended at $t=0$ by a Radon measure.
This will be done in Proposition \ref{prop:reversible unique extension} below,  and to this effect, we need the following lemma, which shows that the reversible solutions to the conservative equation \eqref{eq: conservative backward opt cond} can be represented explicitly by means of the backward characteristics associated to the transport flow generated by the transport coefficient $a(t,x)$ defined in \eqref{transport-coef thm}, or in other words, by the solutions to the backward system of ODEs \eqref{optimality system thm}.

\begin{lemma}\label{lem:reversible sol expl}
Let $T>0$, $u_0, w \in \Lip(\R^N)$, and let, either $N=1$ and $H$ be a Hamiltonian satisfying \eqref{general Hcond}, \eqref{H bounded}, \eqref{H lipschitz in x} and \eqref{H lipschitz in p}, or let $N>1$ and $H$ be of the form \eqref{quadratic Hamiltonian section}.
For any $\tau \in (0,T]$ and $\pi^\tau(\cdot) \in L_{loc}^\infty(\R^N)$, there exists a unique reversible solution $\pi \in C((0,\tau]; \ , L_{loc}^\infty (\R^N)-w^\ast)$ to the backward conservative equation
\begin{equation}\label{conservative eq lemma}
\left\{
\begin{array}{ll}
\partial_t \pi + \operatorname{div}_x  (a(t,x) \pi) = 0 & \text{in}\ (0,\tau)\times \R^N, \\
\noalign{\vspace{2pt}}
\pi(\tau, x ) = \pi^\tau & \text{in} \ \R^N,
\end{array}\right.
\end{equation}
where $a(t,x)$ is defined a.e. in $(0,\tau)\times \R^N$ as
$$
a(t,x) = H_p (x,\nabla_x [S_t^+ u_0(x)]).
$$
In addition, for all $\phi \in C_c^0 (\R^N)$ and $s\in (0,\tau]$, the reversible solution $\pi$ satisfies
\begin{equation}
\label{reversible solution lem}
\displaystyle\int_{\R^N} \phi(x) \pi(s,x) dx = \int_{\R^N} \phi\left( \Gamma_{\tau, s} (x) \right) \pi^\tau (x) dx,
\end{equation}
where $\Gamma_{\tau, s}\in L^\infty_{loc}(\R^N; \R^N)$ is defined a.e. in $\R^N$ as
$$
  \Gamma_{\tau, s} (x) := \xi_{\tau,x}(s)  \quad \text{for all $x\in \R^N$ where $S_\tau^+ u_0(\cdot)$ is differentiable,}
$$
where $(\xi_{\tau,x} (\cdot), p_{\tau,x}(\cdot))\in C^1([0,\tau];\R^N\times \R^N)$ is the unique solution to
\begin{equation}\label{optim.system lem}
\left\{\begin{array}{ll}
\dot{\xi}_{\tau,x} (s)  = H_p (\xi_{\tau,x} (s), p_{\tau,x} (s)) & s\in (0,\tau) \\
\noalign{\vspace{2pt}}
\dot{p}_{\tau,x} (s) = -H_x  (\xi_{\tau,x} (s), p_{\tau,x} (s)) & s\in (0,\tau) \\
\noalign{\vspace{2pt}}
\xi_{\tau,x} (\tau) = x \quad p_{\tau,x} (\tau) = \nabla S_\tau^+ u_0 (x).
\end{array}\right.
\end{equation}
\end{lemma}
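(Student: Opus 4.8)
The plan is to first obtain existence and uniqueness of the reversible solution by applying the Bouchut--James--Mancini theory on interior time strips, and then to make the transport flow explicit through the Hamiltonian characteristics, from which the pushforward identity \eqref{reversible solution lem} follows. By Corollary \ref{cor: OSLC}, the coefficient $a(t,x)$ defined in \eqref{transport-coef thm} satisfies the OSLC condition \eqref{OSLC OSLC} with $\alpha(t)=C/t$. Although $C/t\notin L^1(0,\tau)$, for every $\varepsilon\in(0,\tau)$ the restriction of $a$ to $(\varepsilon,\tau)\times\R^N$ satisfies the OSLC condition with $\alpha(t)=C/t\le C/\varepsilon$, which now lies in $L^1(\varepsilon,\tau)$. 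Theorem \ref{thm:reversible wellposedness} then yields a unique reversible solution on $[\varepsilon,\tau]$ with terminal datum $\pi^\tau$. By uniqueness on each strip, the solution associated to a smaller $\varepsilon'<\varepsilon$ restricts on $[\varepsilon,\tau]$ to the one associated to $\varepsilon$, so these glue into a single reversible solution $\pi\in C((0,\tau];\,L^\infty_{loc}(\R^N)-w^\ast)$ on the whole half-open interval, which is moreover unique since any reversible solution on $(0,\tau]$ restricts to the one on each $[\varepsilon,\tau]$.

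The core of the argument is to identify the transport flow $X^\tau$ of Definition \ref{def: backward flow} with the backward Hamiltonian trajectories. By Lemma \ref{lem:regular points}(i), along any optimal trajectory $\xi_{\tau,x}$ one has $\nabla S_s^+ u_0(\xi_{\tau,x}(s))=p_{\tau,x}(s)$ for $s\in(0,\tau)$; hence $\dot\xi_{\tau,x}(s)=H_p(\xi_{\tau,x}(s),p_{\tau,x}(s))=H_p(\xi_{\tau,x}(s),\nabla S_s^+u_0(\xi_{\tau,x}(s)))=a(s,\xi_{\tau,x}(s))$, so the optimal trajectories are exactly the integral curves of $a$. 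Since $X^\tau$ is constant along characteristics and equals the identity at $s=\tau$, it satisfies $X^\tau(s,\xi_{\tau,x}(s))=x$. At every differentiability point $x$ of $S_\tau^+u_0$, Lemma \ref{lem:regular points}(ii) guarantees that the optimal trajectory, and hence the backward characteristic ending at $x$, is unique; this shows that the a.e.-defined map $\Gamma_{\tau,s}(x)=\xi_{\tau,x}(s)$ is the (a.e.) inverse of $X^\tau(s,\cdot)$.

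Starting from the explicit reversible solution $\pi(s,x)=\pi^\tau(X^\tau(s,x))\det(\nabla_x X^\tau(s,x))$ of Theorem \ref{thm:reversible wellposedness}, I would test against $\phi\in C_c^0(\R^N)$ and perform the change of variables $y=X^\tau(s,x)$, whose inverse is $\Gamma_{\tau,s}$ and whose Jacobian gives $\det(\nabla_x X^\tau(s,x))\,dx=dy$:
\[
\int_{\R^N}\phi(x)\,\pi(s,x)\,dx=\int_{\R^N}\phi(x)\,\pi^\tau(X^\tau(s,x))\det(\nabla_x X^\tau(s,x))\,dx=\int_{\R^N}\phi(\Gamma_{\tau,s}(y))\,\pi^\tau(y)\,dy,
\]
which is precisely \eqref{reversible solution lem}; equivalently, $\pi(s,\cdot)\,dx=(\Gamma_{\tau,s})_\#(\pi^\tau\,dx)$.

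The delicate step is this last change of variables, because $X^\tau(s,\cdot)$ is merely Lipschitz with $\det(\nabla_x X^\tau)\ge0$ possibly vanishing on the caustic set where characteristics merge, so it is not a diffeomorphism and the classical substitution rule does not apply verbatim. I expect to handle this through the area formula: the points where $\det(\nabla_x X^\tau)=0$ are exactly those where $X^\tau$ fails to be uniquely determined and contribute nothing to the integral, while on the complement the semiconcavity of $S_\tau^+u_0$ (Proposition \ref{prop: semiconcavity property}) ensures that the backward characteristic, and thus $\Gamma_{\tau,s}$, is single-valued, so that the preimage $X^\tau(s,\cdot)^{-1}(y)$ reduces a.e. to the single point $\Gamma_{\tau,s}(y)$. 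If needed, this can be made rigorous by approximating $a$ with smooth coefficients $a_n$, for which the flow is a genuine diffeomorphism and the formula holds classically, and then passing to the limit via the stability of reversible solutions.
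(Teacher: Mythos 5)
Your first step (existence and uniqueness of the reversible solution by applying Theorem \ref{thm:reversible wellposedness} on the strips $(\varepsilon,\tau]$, where the OSLC rate $C/t$ is integrable, and then gluing as $\varepsilon\to 0$) is exactly what the paper does. For the representation formula \eqref{reversible solution lem}, however, you take a genuinely different, Lagrangian route: you try to identify the transport flow $X^\tau$ of Definition \ref{def: backward flow} with the Hamiltonian characteristics and then change variables in the explicit formula $\pi(s,x)=\pi^\tau(X^\tau(s,x))\det(\nabla_x X^\tau(s,x))$. The paper stays entirely on the dual side: it fixes $s$, uses the semigroup property to regard $a$ on $[s,\tau]$ as the coefficient generated by the initial datum $u_s=S_s^+u_0$ (for which the OSLC is uniform in time), invokes Proposition \ref{prop: deriv transport eq} and Theorem \ref{thm: differentiability general weak convergence} to identify the unique duality solution with datum $\phi$ at time $s$ as $v(t,x)=\phi(\xi_{t,x}(s))$, and then reads off \eqref{reversible solution lem} from the constancy in time of $t\mapsto\int v\,\pi\,dx$ in Definition \ref{def:duality sol}. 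No change of variables and no analysis of the flow map are needed there.

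Your route has two genuine gaps as written. First, the assertion that ``$X^\tau$ is constant along characteristics'' and hence $X^\tau(s,\xi_{\tau,x}(s))=x$ does not follow from Definition \ref{def: backward flow}: $X^\tau$ is merely Lipschitz and satisfies the transport equation only almost everywhere in $(0,\tau)\times\R^N$, so one cannot evaluate that identity along a single trajectory (a null set in space--time) without an additional Fubini- or Filippov-type argument; moreover, for $N>1$ the transport flow is not unique, so you must also check that the identity holds for the particular flow entering the representation of Theorem \ref{thm:reversible wellposedness} (what is unique is $\det(\nabla_x X^\tau)$, not $X^\tau$ itself). Second, the change of variables for the non-injective Lipschitz map $X^\tau(s,\cdot)$ requires the area formula together with the facts that (i) for a.e.\ $y$ the fibre $(X^\tau(s,\cdot))^{-1}(y)$ does not meet the set $\{\det\nabla_x X^\tau=0\}$, and (ii) the remaining preimage is the single point $\Gamma_{\tau,s}(y)$, which in turn uses that two distinct forward characteristics can only merge at a point of non-differentiability of $S_\tau^+u_0$ (Lemma \ref{lem:regular points}). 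You gesture at both points but do not close them; each is roughly as much work as the paper's duality argument, which bypasses them entirely by exploiting the forward results already proved.
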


\begin{proof}
First of all, since, by means of Corollary \ref{cor: OSLC},  the transport parameter $a(t,x)$ satisfies the OSLC condition \eqref{OSLC} with $\alpha(t) = C/t$, we can use Theorem \ref{thm:reversible wellposedness} to ensure  existence and uniqueness of a reversible solution in $(\varepsilon,\tau]\times\R^N$, and by letting $\varepsilon\to 0$, we deduce that there exists a unique reversible solution $\pi \in C((0,\tau];\,  L^\infty_{loc}(\R^N)-w^\ast)$ satisfying $\pi(\tau, \cdot)= \pi^\tau(\cdot)$. Notice that the extension of the solution at $t=0$ might not be possible in the space $L^\infty_{loc}(\R^N)$.

Let us now prove the second part of the lemma.
Let $s\in (0,\tau]$ and $\phi\in \Lip(\R^N)\cap L^\infty(\R^N)$ be fixed.
We set 
$$
u_s (\cdot) := S_s^+ u_0(\cdot),
$$
and by  the semigroup property, we have that
$$
S_{t-s}^+ u_s(x) = S_t^+ u_0 (x), \qquad\forall (t,x)\in [s,T]\times \R^N,
$$
and then, we also have
$$
a_s (t,x) = H_p (x,\,  \nabla_x [S_{t-s}^+ u_s (x)]) = a(t,x),\qquad\forall (t,x)\in [s,T]\times \R^N.
$$

Now, by means of Theorem \ref{thm:duality wellposedness}, and using the uniform OSLC in $[s,T]$, we have that, for any $\phi \in C_c^0 (\R^N)$, there exists a unique duality solution $v\in \mathcal{S}_{BV}$ to  the linear transport equation
$$
\left\{
\begin{array}{ll}
\partial_t v + a_s(t,x) \nabla_x v = 0 & \text{in} \ (s,T)\times \R^N \\
\noalign{\vspace{2pt}}
v(s, x) = \phi(x) & \text{in} \ \R^N.
\end{array}
\right.
$$
Moreover, by Proposition \ref{prop: deriv transport eq}, we have 
$$
v (t,\cdot) = \partial_\phi S_{t-s}^+ u_s(\cdot) \qquad \forall t\in [s, T],
$$
and by Theorem \ref{thm: differentiability general weak convergence}, we have in addition that
$$
v(t,x) = \phi (\xi_{t,x} (s)) \quad \forall (t,x)\in (s,T]\times \R^N \ \text{where $S_{t-s}^+u_s(\cdot)$ is differentiable at $x$,}
$$
where $\xi_{t,x}(\cdot)$ is defined\footnote{Obviously, in \eqref{optim.system lem}, we have to replace $\tau$ by $t$.  Recall moreover that $S_{t-s}^+u_s(\cdot) = S_t^+ u_0(\cdot)$.} as in \eqref{optim.system lem}, in the statement of the Lemma.

Finally, by the definition of duality solution, we conclude that
$$
\displaystyle\int_{\R^N} \phi (x) \pi (s,x) dx = \displaystyle\int_{\R^N} v (\tau, x) \pi (\tau,x) dx = \displaystyle\int_{\R^N}   \phi (\Gamma_{\tau,s}(x)) \pi^\tau (x) dx,
$$
where $\Gamma_{\tau,s}: \R^N\longrightarrow \R^N$ is defined a.e. in $\R^N$ as 
$$
\Gamma_{\tau,s}(x) =\xi_{\tau,x} (s), \qquad \forall x\in \R^N \quad
\text{such that} \ S_\tau^+u_0 (\cdot)\ \text{is differentiable.}
$$
\end{proof}

We can now use Lemma \ref{lem:reversible sol expl}
to prove that any reversible solution to \eqref{conservative backward} with compact support can be uniquely extended at $t=0$ by a finite Radon measure in $\R^N$.

\begin{proposition}\label{prop:reversible unique extension}
Under the same assumptions as in Lemma \ref{lem:reversible sol expl}, for any $\tau\in (0,T]$ and $\pi^\tau (\cdot) \in L^\infty (\R^N)$ with compact support, 
there exists a unique Radon measure $\mu \in \mathcal{M}(\R^N)$ such that the unique reversible solution $\pi \in C((0,\tau], L_{loc}^\infty (\R^N)-w^\ast)$ to \eqref{conservative eq lemma} satisfies
$$
\pi(t,\cdot) \rightharpoonup \pi^0 \qquad \text{as} \ s\to 0^+, \quad \text{in the weak}^\ast \ \text{topology of} \ \mathcal{M}(\R^N).
$$
Hence, for any $\tau\in (0,T]$ and $\pi^\tau (\cdot)\in L^\infty (\R^N)$ with compact support,  the backward conservative problem \eqref{conservative eq lemma} admits a unique measure-valued solution  $\pi^\ast  \in C([0,\tau],\mathcal{M} (\R^N)-w^\ast)$, given by
$$
\pi^\ast (s) \left\{
\begin{array}{ll}
\pi (s,\cdot) & \text{for} \ s\in (0, \tau] \\
\noalign{\vspace{2pt}}
\pi^0 & \text{for} \ s=0.
\end{array}
\right.
$$
\end{proposition}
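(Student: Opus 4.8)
The plan is to extend the reversible solution $\pi$ at $t=0$ by reading off its limit directly from the explicit characteristic representation \eqref{reversible solution lem} established in Lemma \ref{lem:reversible sol expl}. The candidate for the limiting measure is the push-forward of the terminal datum $\pi^\tau$ under the initial-time characteristic map $\Gamma_{\tau,0}(x) = \xi_{\tau,x}(0) = \Phi_{u_0}^\tau(x)$ from \eqref{Phi def}, that is, the functional
\[
\langle \pi^0, \phi \rangle := \int_{\R^N} \phi\big(\Phi_{u_0}^\tau(x)\big)\, \pi^\tau(x)\, dx, \qquad \phi \in C_c^0(\R^N).
\]
The conceptual point is that, although $\alpha(t) = C/t \notin L^1(0,T)$ forbids the Bouchut--James--Mancini extension within $L^\infty$, the characteristics $s \mapsto \xi_{\tau,x}(s)$ solving \eqref{optim.system lem} remain $C^1$ all the way down to $s=0$; only the Jacobian of $x \mapsto \Gamma_{\tau,s}(x)$ may degenerate, so mass can concentrate and the limit is genuinely a measure rather than an $L^\infty$ function.

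First I would check that $\pi^0$ is a finite Radon measure with compact support. Since $\pi^\tau \in L^\infty(\R^N)$ has compact support, $\pi^\tau\,dx$ is a finite signed measure, and a push-forward never increases total variation, so $\|\pi^0\|_{\mathcal{M}} \le \|\pi^\tau\|_{L^1}$. For the support, the dual-arc bound $|p_{\tau,x}(\cdot)| \le C_T$ coming from Lemma \ref{lem:regular points} together with \eqref{H lipschitz in p} gives $|\dot\xi_{\tau,x}(s)| = |H_p(\xi_{\tau,x}(s), p_{\tau,x}(s))| \le C_R$, whence every characteristic issued from $\operatorname{supp}\pi^\tau$ stays in a fixed compact set $K \subset\subset \R^N$. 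The representation \eqref{reversible solution lem} reads $\pi(s,\cdot)\,dx = (\Gamma_{\tau,s})_\# (\pi^\tau\,dx)$, so the same bound shows that each $\pi(s,\cdot)\,dx$ is supported in $K$ for all $s \in (0,\tau]$, with uniform mass $\le \|\pi^\tau\|_{L^1}$.

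Next I would prove the convergence $\pi(s,\cdot) \rightharpoonup \pi^0$. For fixed $\phi \in C_c^0(\R^N)$, equation \eqref{reversible solution lem} gives $\langle \pi(s,\cdot), \phi\rangle = \int_{\R^N} \phi(\xi_{\tau,x}(s))\,\pi^\tau(x)\,dx$. Since $s \mapsto \xi_{\tau,x}(s)$ is continuous at $s=0$ for every $x$ at which $S_\tau^+ u_0$ is differentiable (hence a.e.\ $x$), we get $\phi(\xi_{\tau,x}(s)) \to \phi(\Phi_{u_0}^\tau(x))$ pointwise a.e., dominated by $\|\phi\|_\infty |\pi^\tau(x)|\in L^1$. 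Dominated convergence then yields $\langle \pi(s,\cdot),\phi\rangle \to \langle \pi^0,\phi\rangle$. Because all measures involved are supported in the common compact set $K$ and have uniformly bounded mass, this convergence against $C_c^0$ upgrades, by a cutoff argument, to convergence against all of $C_0(\R^N)$, i.e.\ genuine $\mathcal{M}(\R^N)$--$w^\ast$ convergence; uniqueness of $\pi^0$ is then immediate since weak$^\ast$ limits are unique.

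Finally, the continuity statement follows by gluing: on $(0,\tau]$ the uniform mass bound and common support $K$ turn the $C((0,\tau], L^\infty_{loc}-w^\ast)$ regularity of $\pi$ supplied by Lemma \ref{lem:reversible sol expl} into $C((0,\tau], \mathcal{M}-w^\ast)$ regularity, and the convergence just proved supplies continuity at $s=0$, so $\pi^\ast \in C([0,\tau], \mathcal{M}(\R^N)-w^\ast)$. The main obstacle is precisely the passage from distributional $C_c^0$ testing to genuine weak$^\ast$ convergence in $\mathcal{M}(\R^N)$: one must rule out escape of mass to infinity, which is exactly what the uniform characteristic bound, and hence the common compact support $K$, provides. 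Without the special structure $a = H_p(x,\nabla_x S_t^+ u_0)$ underlying the representation \eqref{reversible solution lem}, this compactness, and therefore the extension itself, would not be available.
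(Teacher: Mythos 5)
Your proof is correct and follows essentially the same route as the paper: both rest on the representation formula \eqref{reversible solution lem} and on the continuity of the characteristics $s\mapsto\xi_{\tau,x}(s)$ down to $s=0$, combined with dominated convergence and the uniform compactness of the supports of $\pi(s,\cdot)$. The only difference is organisational: the paper first extracts weak$^\ast$ subsequential limits via a compactness criterion and then shows any two such limits agree, whereas you write the limit down explicitly as the push-forward $(\Gamma_{\tau,0})_{\#}(\pi^\tau\,dx)$ and prove convergence to it directly, which is marginally more streamlined and has the added benefit of identifying $\pi^0$ explicitly.
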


\begin{proof}
Let us note that, by property \eqref{reversible solution lem} from Lemma \ref{lem:reversible sol expl}, along with the fact that $\pi^\tau$ is in $L^\infty (\R^N)$ and compactly supported, and that the solutions to the optimality system \eqref{optim.system lem} remain in a compact set depending only on $T$ and $\| u_0 \|_{L^\infty(\R^N)}$ (see \cite[Lemma 1]{ancona2016compactness} and also \cite{ancona2016quantitative}), one can deduce that there exists a constant $C$ such that
$$
\displaystyle\int_{\R^N} |\pi(s,  x) | dx \leq C ,\qquad \forall s\in (0,\tau].
$$
Hence, we can apply De La Vallée Poussin compactness criterion for Radon measures (see \cite[Theorem 1.59]{ambrosio2000functions}), to deduce that for any sequence $\{s_n\}_{n\geq 1}$ with $s_n\to 0^+$, the sequence of measures associated to the functions $\pi(s_n,\cdot)$ has a subsequence that converges in the $\text{weak}^\ast$ topology of $\mathcal{M}(\R^N)$.

Let us now prove that for any sequence $\{ s_n\}_{n\geq 1}$, the $\text{weak}^\ast$ limit is unique.
Let $\{ s_n\}_{n\geq 1}$ and $\{ t_n\}_{n\geq 1}$ be two sequences such that $s_n\to 0^+$ and $t_n\to 0^+$.
Then,  for any $\phi\in C_c^0(\R^N)$,by virtue of \eqref{reversible solution lem} in Lemma \ref{lem:reversible sol expl}, we have that
\begin{equation}
\label{measure Cauchy sequence}
\left| \displaystyle\int_{\R^N} \phi(x) \left[ \pi (s_n, x) dx -  \pi (t_n, x)\right] dx \right|  = 
\left| \displaystyle\int_{R^N} \left[ \phi(\Gamma_{\tau, s_n} (x)) - \phi (\Gamma_{\tau,t_n}(x) ) \right] \pi^\tau(x) dx \right|.
\end{equation}
Now,  in view of the definition of $\Gamma_{\tau,s}(\cdot)$ in the statement of Lemma \ref{lem:reversible sol expl} and by the continuity of $\phi$, we deduce that
$$
\phi(\Gamma_{\tau, s_n} (x)) - \phi (\Gamma_{\tau,t_n}(x) )  = 
\phi(\xi_{\tau, x} (s_n)) - \phi (\xi_{\tau,x}(t_n) ) \to 0 \qquad \text{as $n\to\infty$, for a.e.} \  x\in \R^N,
$$
and this, together with \eqref{measure Cauchy sequence} and the fact that $\pi^\tau\in L^\infty(\R^N)$ is compactly supported,  allows us to apply Lebesgue's dominated convergence Theorem to obtain
$$
\left| \displaystyle\int_{\R^N} \phi(x) \left[ \pi (s_n, x) dx -  \pi (t_n, x)\right] dx \right|  \to 0 \qquad \text{as} \ n\to \infty.
$$
This implies that for any sequence $\{s_n\}_{n\geq 1}$, the sequence of functions $\pi (s_n,\cdot)$ converges in the $\text{weak}^\ast$ topology to a unique Radon measure $\pi^0\in \mathcal{M}(\R^N)$.
\end{proof}

Let us conclude the section with the proof of Theorem \ref{thm:gateaux deriv transport}, which is nothing but a combination of Propositions \ref{prop: deriv transport eq} and \ref{prop:reversible unique extension}.

\begin{proof}[Proof of Theorem \ref{thm:gateaux deriv transport}]
By Proposition \ref{prop: deriv transport eq},  we have that
$$
\dfrac{S_t^+ (u_0 + \delta w) (x) - S_t^+ u_0 (x)}{\delta} \longrightarrow v(t,x) \qquad \text{in} \quad C([0,T], L_{loc}^1(\R^N)),
$$
where $v(t,x)$ is a duality solution to the linear transport equation \eqref{transpor-eq epsilon} with transport coefficient \eqref{transport-coef thm} and initial condition $v(0,\cdot) = w(\cdot)$.
We only need to prove that this is in fact the unique duality solution to \eqref{transpor-eq epsilon}.

Let $v_1, v_2 \in C ([0,T], L_{loc}^1 (\R^N))$ be two duality solutions to \eqref{transpor-eq epsilon} satisfying
$$
v_1(0,x) = v_2 (0,x) = w (x) \qquad \forall x\in \R^N.
$$

Now, for any $\tau\in (0, T]$ and any $\pi^\tau \in L^\infty (\R^N)$ with compact support, let $\pi\in C((0,\tau], \, L^\infty (\R^N)-w^\ast)$ be the unique reversible solution to \eqref{conservative backward} in $(0,\tau] \times \R^N$ with terminal condition $\pi(\tau, \cdot) = \pi^\tau(\cdot)$,
and let $\pi^0 \in \mathcal{M}(\R^N)$ be the unique Radon measure, obtained by means of Proposition \ref{prop:reversible unique extension} as the limit
$$
\pi (s, \cdot) \rightharpoonup \pi^0 \qquad \text{as} \ s\to 0^+ \quad \text{in the weak}^\ast \ \text{topology of $\mathcal{M}(\R^N)$}.
$$
Then, by the definition of duality solution we have that the maps 
$$
s \longmapsto \displaystyle\int_{\R^N} v_1(s,x) \pi(s,x)dx \qquad
\text{and} \qquad
s \longmapsto \displaystyle\int_{\R^N} v_2(s,x) \pi(s,x)dx
$$
are constant in $(0,\tau]$, and in particular,  we have
$$
 \displaystyle\int_{\R^N} v_1(\tau,x) \pi^\tau(x)dx =
  \displaystyle\int_{\R^N} v_2(\tau,x) \pi^\tau(x)dx  = 
   \displaystyle\int_{\R^N} w(x) d\pi^0(x)
$$
for all $\pi^\tau \in L^\infty (\R^N)$.
This implies that  $v_1(\tau,x) =  v_2(\tau,x)$ for a.e.  $x\in \R^N$ and for all $\tau\in (0,T]$. Note that the right-hand-side in the above equality is well-defined as $w$ is a continuous function.
\end{proof}

\section{Existence of minimizers}
\label{sec:existence minimizers}

In this section, we prove that the optimization problem 
\eqref{L2 OCP S^+}
has at least one solution.
In the proof, we shall make use of the \emph{backward viscosity operator} $S_T^-:\Lip (\R^N)\longrightarrow \Lip(\R^N)$, whose definition we recall here.
\begin{equation}\label{value function backward general H}
S_T^- u_T (x) = 
\sup_{\substack{\xi\in W^{1,1}(t,T;\R) \\
\xi(0) = x}}
\left\{
-\int_0^T H^\ast (\xi(s),\, \dot{\xi}(s)) ds + u_T (\xi(T)) \right\}, \qquad \forall x\in \R^N.
\end{equation}
Note that this is the analogous version to the forward viscosity operator $S_T^+$ defined in \eqref{value function general H},  i.e. $S_T^- u_T (\cdot)$ is the unique \emph{backward viscosity solution} at time $0$ to the Hamilton-Jacobi equation \eqref{HJ eq} with terminal condition $u(T,\cdot) = u_T(\cdot)$. See \cite{barron1999regularity} for further details on backward and forward viscosity solutions.

Let us state and prove the existence result for the optimization problem \eqref{L2 OCP S^+}.

\begin{theorem}\label{thm:existence L2 projection}
Let $T>0$ and let $H$ be a Hamiltonian satisfying \eqref{general Hcond} , \eqref{H lipschitz in x}, \eqref{H lipschitz in p}, and  \eqref{H bounded} with $C_0=0$ .
Let $S_T^+$ be the forward viscosity operator defined in \eqref{value function general H}, and let $u_T\in \Lip_0(\R^N)$ be a given function with compact support.
Then there exists a function $u_0^\ast\in \Lip_0(\R^N)$ solution to the optimization problem \eqref{L2 OCP S^+}.
\end{theorem}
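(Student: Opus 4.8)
The plan is to run the direct method, but since $\mathcal{J}_T$ is not coercive, the minimizing sequence must first be replaced by a better-behaved one using the backward operator $S_T^-$. Let $\{u_0^{(n)}\}\subset \Lip_0(\R^N)$ be a minimizing sequence, so that $\mathcal{J}_T(u_0^{(n)})\to m:=\inf \mathcal{J}_T$, and set $\varphi^{(n)}:=S_T^+u_0^{(n)}\in\Lip_0(\R^N)$. By definition $\|\varphi^{(n)}-u_T\|_{L^2(\R^N)}^2\to m$, so $\{\varphi^{(n)}\}$ is bounded in $L^2(\R^N)$. The key idea is to work instead with the regularized designs $\tilde u_0^{(n)}:=S_T^-\varphi^{(n)}=S_T^-(S_T^+u_0^{(n)})$. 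Using the identity $S_T^+(S_T^-(S_T^+u_0))=S_T^+u_0$, which holds for all $u_0\in\Lip(\R^N)$, we get $S_T^+\tilde u_0^{(n)}=\varphi^{(n)}$, and therefore $\mathcal{J}_T(\tilde u_0^{(n)})=\mathcal{J}_T(u_0^{(n)})$; thus $\{\tilde u_0^{(n)}\}$ is again a minimizing sequence, with the decisive advantage that each $\tilde u_0^{(n)}$ is a backward viscosity solution at time $0$.

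First I would extract a limit from $\{\tilde u_0^{(n)}\}$. By Proposition \ref{prop: semiconcavity property}(ii) applied at $t=0$, every $\tilde u_0^{(n)}=S_T^-\varphi^{(n)}$ is semiconvex with linear modulus and the \emph{uniform} constant $C/T$. Combined with the $L^2$-bound on $\varphi^{(n)}$ and the compactness estimates of \cite{ancona2016quantitative} (see Remark \ref{rmk: C=0}), which propagate the support and yield uniform local sup-bounds, the uniform semiconvexity gives uniform local Lipschitz bounds, so by Arzelà--Ascoli a subsequence of $\{\tilde u_0^{(n)}\}$ converges locally uniformly to some $u_0^\ast$; the uniform compact support places $u_0^\ast\in\Lip_0(\R^N)$ and promotes the convergence to $\tilde u_0^{(n)}\to u_0^\ast$ in $L^\infty(\R^N)$ along the subsequence.

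Next I would pass to the limit in the functional. By the $1$-Lipschitz continuity of $S_T^+$ in the sup-norm (Remark \ref{rmk: S_T^+ Lipschitz}), the convergence $\tilde u_0^{(n)}\to u_0^\ast$ in $L^\infty(\R^N)$ yields $S_T^+\tilde u_0^{(n)}\to S_T^+u_0^\ast$ in $L^\infty(\R^N)$. But $S_T^+\tilde u_0^{(n)}=\varphi^{(n)}$, so in fact $\varphi^{(n)}\to S_T^+u_0^\ast$; in particular the limit $\varphi^\ast:=S_T^+u_0^\ast$ is reachable and, since all the $\varphi^{(n)}$ and the limit share a common compact support, the convergence is strong in $L^2(\R^N)$. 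Hence $\mathcal{J}_T(u_0^\ast)=\|S_T^+u_0^\ast-u_T\|_{L^2(\R^N)}^2=\lim_n\|\varphi^{(n)}-u_T\|_{L^2(\R^N)}^2=m$, which shows that $u_0^\ast$ solves \eqref{L2 OCP S^+}.

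The main obstacle is the compactness step. The non-coercivity of $\mathcal{J}_T$ (the fibers of $S_T^+$ are unbounded cones in $L^2$) is precisely what forces the passage through $S_T^-$: the gain is that the images $S_T^-\varphi^{(n)}$ are semiconvex with a modulus that is uniform in $n$, which rules out concentration and gives equicontinuity. Making this rigorous requires a uniform local sup-bound and uniform compact support for the $\tilde u_0^{(n)}$, for which I would lean on the quantitative compactness and finite-speed-of-propagation estimates of \cite{ancona2016quantitative}; care is also needed to ensure the resulting convergence is strong in $L^2(\R^N)$ (not merely weak or local), so that the value of the functional actually passes to the limit rather than only satisfying $\mathcal{J}_T(u_0^\ast)\le m$ by lower semicontinuity.
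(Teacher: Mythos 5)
Your proposal is correct and follows essentially the same route as the paper: replace the minimizing sequence by its regularization through $S_T^-\circ S_T^+$ using the identity $S_T^+(S_T^-(S_T^+u_0))=S_T^+u_0$, obtain compactness from the uniform semiconvexity constant $C/T$ together with equiboundedness and common compact support via Arzel\`a--Ascoli, and pass to the limit using the $1$-Lipschitz continuity of $S_T^+$ in the sup-norm. The only cosmetic difference is that you justify the uniform bounds on $\tilde u_0^{(n)}$ from the $L^2$-bound and the compactness estimates, whereas the paper normalizes the minimizing sequence to be equibounded with common support at the outset; both are at the same level of rigor.
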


\begin{proof}
It is well-known, see for instance \cite{barron1999regularity,misztela2020initial}, that combining the operators $S_T^+$ and $S_T^-$, we have following property:
\begin{equation}\label{ping pong}
S_T^+ (S_T^-( S_T^+ u_0)) = S_T^+ u_0 \qquad \forall u_0 \in \Lip(\R^N).
\end{equation}

Let $\{\varphi_n\}_{n\in\N} \subset \Lip_0(\R^N)$ be a minimizing sequence for the functional $\mathcal{J}_T$.
In view of the definition of $S_T^+$, and since $u_T$ is Lipschitz continuous with compact support, we can assume, without loss of generality, that the sequence $\varphi_n$ is equibounded and that all the elements are supported in a compact set independent of $n\in \N$.

Besides, by the property \eqref{ping pong}, the sequence of initial conditions
$$
\tilde{\varphi}_n := S_T^- (S_T^+ \varphi_n) \qquad \forall n\in \N 
$$
is also a minimizing sequence for $\mathcal{J}_T$.
Moreover, using a comparison argument and the finite speed of propagation of the equation \eqref{HJ eq}, we can deduce that the sequence $\tilde{\varphi}_n$ is also equibounded and has support in a compact set independent of $n\in \N$.

Now, we can use the regularizing effect of the backward viscosity operator $S_T^-$,  see Proposition \ref{prop: semiconcavity property}, to ensure that all the elements of the sequence $\tilde{\varphi}_n$ are semiconvex with linear modulus and constant $\frac{C}{T}$, independent of $n\in \N$. Since $\tilde{\varphi}_n$ is also equibounded and with compact support, we can deduce, using Theorem 2.1.7 and Remark 2.1.8 in \cite{cannarsa2004semiconcave}, that the sequence $\tilde{\varphi}_n$ is equicontinuous in a compact set, and then, by means of Arzéla-Ascoli Theorem, we can extract a subsequence, that we still denote by $\tilde{\varphi}_n$, that converges uniformly to some $u_0^\ast \in \Lip(\R^N)$.

Finally, using that $\tilde{\varphi}_n$ is a minimizing sequence, together with the continuity of the operator $S_T^+$ from Theorem \ref{thm: differentiability general weak convergence} (see also Remark \ref{rmk: S_T^+ Lipschitz}), we conclude that 
$$
\mathcal{J}_T (u_0^\ast) = \lim_{n\to \infty} \mathcal{J}_T (\tilde{\varphi}_n) = \inf_{u_0\in \Lip(\R^N)} \mathcal{J}_T (u_0).
$$
\end{proof}

Note that Theorem \ref{thm:existence L2 projection} provides existence of an optimal inverse design for any $u_T\in \Lip_0(\R^N)$, as the solution to the optimization problem \eqref{L2 OCP S^+}. Due to the lack of backward uniqueness for the initial-value problem \eqref{HJ eq}, uniqueness of an optimal inverse design is not in general true.
We can however consider a different but related problem, which is that of the  $L^2$-projection of $u_T$ onto the reachable set $\mathcal{R}_T$, that we can formulate as the optimization problem
\begin{equation}\label{L^2 proj existence}
\underset{\varphi\in \mathcal{R}_T}{\text{minimize}} \ \mathcal{H} (\varphi):= \|\varphi - u_T\|_{L^2(\R^N).}
\end{equation}

For the case of $x-$independent quadratic Hamiltonians of the form
\begin{equation}\label{quadratic hamiltonian}
H(p) = \dfrac{\langle A\, p,\, p\rangle}{2}, \quad \text{for some positive definite matrix $A\in \mathcal{M}_N (\R)$},
\end{equation}
we can actually prove that the optimization problem \eqref{L^2 proj existence} admits a  unique solution by means of Hilbert Projection Theorem, using a sharp characterization of $\mathcal{R}_T$ based on a semiconcavity inequality.
It is proved in \cite[Theorem 2.2]{esteve2020inverse} that a target $u_T$ is reachable in time $T>0$ if and only if $u_T$ is a viscosity solution to the second-order differential inequality
 \begin{equation}\label{semiconcavity ineq}
D^2 u_T - \dfrac{A^{-1}}{T} \leq 0 \qquad \text{in} \ \R^N,
 \end{equation}
 where $D^2 u_T$ denotes the Hessian matrix of $u_T$. 
This inequality represents, in fact, the necessary and sufficient semiconcavity condition for the reachability of a target. 
Such a precise semiconcavity condition for the characterization of the reachable set $\mathcal{R}_T$ is, up to the best of our knowledge, unavailable for non-quadratic $x$-dependent Hamiltonians.

\begin{remark}\label{rmk: convexity reachable set}
Observe that, we can use the reachability condition \eqref{semiconcavity ineq} to prove that the reachable set is convex whenever the Hamiltonian is of the form \eqref{quadratic hamiltonian}.

Indeed, note that \eqref{semiconcavity ineq} is equivalent to say that  the function
$$
x\longmapsto u_T (x) - \dfrac{\langle A^{-1} \, x, \, x\rangle}{2T}  \qquad \text{is concave.}
$$
Then, for any two functions $u_T,v_T\in \mathcal{R}_T$  and any scalar $\alpha\in (0,1)$, observe that the function
\begin{eqnarray*}
x\mapsto \alpha u_T(x) + (1-\alpha)v_T(x) -  \dfrac{\langle A^{-1} \, x, \, x\rangle}{2T}  &=& 
\alpha \left(u_T(x) - \dfrac{\langle A^{-1} \, x, \, x\rangle}{2T} \right) \\
&& + (1-\alpha)\left( v_T(x) -  \dfrac{\langle A^{-1} \, x, \, x\rangle}{2T} \right)
\end{eqnarray*}
is a concave function as it is the convex combination of two concave functions. Hence,  $\alpha u_T(x) + (1-\alpha)v_T(x)\in \mathcal{R}_T$ and we can conclude that $\mathcal{R}_T$ is convex.

\end{remark}

Let us state and prove the following result, which ensures the existence and uniqueness of solution for the optimization problem \eqref{L^2 proj existence}.

\begin{theorem}\label{thm:existence and uniqueness}
Let $T>0$,  let $H$ be a Hamiltonian of the form \eqref{quadratic hamiltonian}, and let $u_T\in \Lip(\R^N)$ be a given compactly supported function. Then, the optimization problem \eqref{L^2 proj existence} has a unique solution $\varphi^\ast\in \Lip(\R^N)\cap L^2(\R^N)$.
\end{theorem}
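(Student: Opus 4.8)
The plan is to recast \eqref{L^2 proj existence} as the orthogonal projection of $u_T$ onto a closed convex subset of the Hilbert space $L^2(\R^N)$ and then apply Hilbert's projection theorem, which delivers both existence and uniqueness at once. Since $u_T\in \Lip(\R^N)$ is compactly supported, it lies in $L^2(\R^N)$, and minimizing $\|\varphi - u_T\|_{L^2(\R^N)}$ over $\varphi\in\mathcal{R}_T$ with finite value amounts to minimizing over the set $C:=\mathcal{R}_T\cap L^2(\R^N)$. This set is nonempty, because the choice \eqref{H bounded} with $C_0=0$ inherent to the quadratic Hamiltonian \eqref{quadratic hamiltonian} gives $S_T^+0=0$, so $0\in C$; and it is convex by the argument of Remark \ref{rmk: convexity reachable set}, noting in addition that convex combinations of $L^2$ functions stay in $L^2$.

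The decisive reformulation is the reachability characterization: by \cite[Theorem 2.2]{esteve2020inverse}, a function $\varphi\in\Lip(\R^N)$ belongs to $\mathcal{R}_T$ if and only if it is a viscosity solution of \eqref{semiconcavity ineq}, which, as recorded in Remark \ref{rmk: convexity reachable set}, is equivalent to the concavity of $g_\varphi(x):=\varphi(x)-\langle A^{-1}x,\,x\rangle/(2T)$. Thus membership in $\mathcal{R}_T$ is governed by a single convex-analytic condition, and this is exactly the feature I would exploit to establish closedness of $C$.

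The main step, and the step I expect to be the principal obstacle, is to show that $C$ is closed in $L^2(\R^N)$. Given $\{\varphi_n\}\subset C$ with $\varphi_n\to\varphi$ in $L^2(\R^N)$, I would pass to a subsequence converging almost everywhere, so that the concave functions $g_{\varphi_n}$ converge a.e., hence on a dense subset of $\R^N$, to the finite limit $g_\varphi$. By the classical stability of concavity under pointwise convergence on a dense set, $g_\varphi$ agrees almost everywhere with a concave function and the convergence is in fact locally uniform; consequently $\varphi$, after redefinition on a null set, satisfies \eqref{semiconcavity ineq}. The delicate point is to certify that the limit lies in $\Lip(\R^N)$, so that \cite[Theorem 2.2]{esteve2020inverse} applies and gives $\varphi\in\mathcal{R}_T$ rather than merely a function in the larger class of semiconcave functions: here I would argue that the simultaneous requirements $\varphi\in L^2(\R^N)$ and concavity of $\varphi-\langle A^{-1}x,x\rangle/(2T)$ force the linear growth of $\nabla g_\varphi$ to cancel that of the quadratic term, yielding a global Lipschitz bound; alternatively, a uniform Lipschitz estimate for the $\varphi_n$ obtained from finite speed of propagation and a comparison argument, as in the proof of Theorem \ref{thm:existence L2 projection}, survives the locally uniform passage to the limit. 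Either way one concludes $\varphi\in C$.

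Once $C$ is known to be a nonempty, closed, convex subset of $L^2(\R^N)$, Hilbert's projection theorem yields a unique $\varphi^\ast\in C$ minimizing $\|\varphi-u_T\|_{L^2(\R^N)}$; this $\varphi^\ast\in\Lip(\R^N)\cap L^2(\R^N)$ is the unique solution of \eqref{L^2 proj existence}, uniqueness being a direct consequence of the convexity of $C$ together with the strict convexity of the squared $L^2$-norm. The one genuinely technical knot is the closedness of $C$, where preserving both the concavity and the global Lipschitz regularity in the $L^2$-limit requires the care outlined above.
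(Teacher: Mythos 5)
Your proposal follows essentially the same route as the paper: both recast \eqref{L^2 proj existence} as the orthogonal projection of $u_T$ onto $\mathcal{R}_T\cap L^2(\R^N)$, obtain convexity from Remark \ref{rmk: convexity reachable set}, prove closedness by passing to a.e.\ convergence and using that a pointwise limit of the concave functions $\varphi_n - \langle A^{-1}x,x\rangle/(2T)$ is concave, and then invoke Hilbert's projection theorem. Your treatment is in fact more careful than the paper's on two points the paper glosses over --- extracting the a.e.-convergent subsequence and verifying that the $L^2$-limit is genuinely Lipschitz so that the reachability characterization of \cite[Theorem 2.2]{esteve2020inverse} applies --- but the underlying argument is the same.
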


\begin{proof}
The existence and uniqueness of solution to problem \eqref{L^2 proj existence} follows from Hilbert Projection Theorem, after proving that $\mathcal{R}_T\cap L^2(\R^N)$ is a convex closed set in $L^2(\R^N)$.  
The convexity of $\mathcal{R}_T$ follows from Remark \ref{rmk: convexity reachable set}, which directly implies the convexity of $\mathcal{R}_T\cap L^2(\R^N)$.  Let us now verify that $\mathcal{R_T} \cap L^2(\R^N)$ is closed in $L^2(\R^N)$. Let $\{\varphi_n\}_{n=1}^\infty$ be a sequence of functions in $\mathcal{R_T} \cap L^2(\R^N)$ strongly converging to some $\varphi^\ast \in L^2(\R^N)$.
This implies that $\varphi_n(x)$ converges to $\varphi^\ast(x)$ for almost every $x\in \R^N$.
Hence,  we have that
$$
x\longmapsto \varphi_n(x) - \dfrac{\langle A^{-1} x, \, x\rangle}{2T}
$$
is a sequence of concave functions that converges for a.e. $x\in \R^N$ to the function
$$
x\longmapsto \varphi^\ast(x) - \dfrac{\langle A^{-1} x, \, x\rangle}{2T}
$$
which is therefore also a concave function, implying that $\varphi^\ast\in \mathcal{R}_T\cap L^2(\R^N)$.  We then conclude that $\mathcal{R}_T\cap L^2(\R^N)$ is a convex closed set of $L^2(\R^N)$, and the conclusion of the theorem follows. 
\end{proof}

Now, we can combine Theorem \ref{thm:existence and uniqueness} with \cite[Theorem 2.6]{esteve2020inverse} to  describe the set of all the solutions to the optimal control problem \eqref{L2  OCP S^+} when the Hamiltonian is of the form \eqref{quadratic hamiltonian}. 

\begin{corollary}\label{cor:set of inverse designs}
Let $T>0$, let $H$ be of the form \eqref{quadratic hamiltonian},  and let $u_T\in \Lip_0(\R^N)$ be a given compactly supported function.
Set the function
$$
\tilde{u}_0 (x) = S_T^- \varphi^\ast (x) := \max_{y\in \R^N} \left\{
\varphi^\ast(y) - \dfrac{\langle A^{-1}(y-x), \, y-x\rangle}{2T}
\right\},
$$
where $\varphi^\ast$ is the unique solution to \eqref{L^2 proj existence} provided in Theorem \ref{thm:existence and uniqueness}.
Then, the set of solutions to the optimal control problem \eqref{L2  OCP S^+} is the convex cone in $\Lip(\R^N)$ defined as 
$$
\mathcal{I}_T(\varphi^\ast) = \left\{ \tilde{u}_0 + \phi \, ; \ \phi \in \Lip(\R^N) \ 
\text{such that} \ \phi \geq 0 \ \text{and} \ \text{supp} (\phi) \subset \R^N \setminus X_T( \varphi^\ast)   \right\},
$$
where $X_T( \varphi^\ast)$ is the subset of $\R^N$ defined as
$$
X_T( \varphi^\ast) := \left\{ z - T A \nabla \varphi^\ast(z) \, ; \ \forall z\in \R^N \ \text{such that} \ \varphi^\ast \ \text{is differentiable at} \ z \right\}.
$$
\end{corollary}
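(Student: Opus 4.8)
The plan is to split the optimal control problem \eqref{L2 OCP S^+} into two independent tasks: first, identifying the optimal \emph{reachable target}, and second, describing all initial conditions that $S_T^+$ maps onto it. The key observation is that, since $S_T^+ u_0 \in \mathcal{R}_T$ for every admissible $u_0$ (here $H$ of the form \eqref{quadratic hamiltonian} satisfies \eqref{H bounded} with $C_0=0$, so the support-preservation property \eqref{lip_0 to L2} is available), the value of $\mathcal{J}_T$ depends on $u_0$ only through the reachable function $S_T^+ u_0$. Consequently
$$
\inf_{u_0} \mathcal{J}_T (u_0) = \inf_{\varphi \in \mathcal{R}_T \cap L^2(\R^N)} \| \varphi - u_T \|_{L^2(\R^N)}^2 ,
$$
and by Theorem \ref{thm:existence and uniqueness} the right-hand side is attained at the \emph{unique} $L^2$-projection $\varphi^\ast$ of $u_T$ onto $\mathcal{R}_T \cap L^2(\R^N)$. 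Therefore an initial condition solves \eqref{L2 OCP S^+} if and only if $S_T^+ u_0 = \varphi^\ast$, i.e. the solution set is exactly the preimage $(S_T^+)^{-1}(\varphi^\ast)$.

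First I would make this reduction precise. The inequality $\inf_{u_0}\mathcal{J}_T \geq \inf_{\varphi}\|\varphi - u_T\|_{L^2(\R^N)}^2$ needs only that $S_T^+ u_0 \in \mathcal{R}_T \cap L^2(\R^N)$. For the reverse inequality I would exhibit a concrete preimage of $\varphi^\ast$: since $\varphi^\ast \in \mathcal{R}_T$, the ping-pong identity \eqref{ping pong} gives $S_T^+(S_T^- \varphi^\ast) = \varphi^\ast$, so $\tilde{u}_0 := S_T^- \varphi^\ast$ is admissible with $\mathcal{J}_T(\tilde{u}_0) = \|\varphi^\ast - u_T\|_{L^2(\R^N)}^2$. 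This simultaneously shows that the infimum is attained, that it equals $\|\varphi^\ast - u_T\|_{L^2(\R^N)}^2$, and that $\tilde{u}_0 \in (S_T^+)^{-1}(\varphi^\ast)$. The uniqueness of the $L^2$-projection from Theorem \ref{thm:existence and uniqueness} then upgrades this to the equivalence ``$u_0$ minimizes $\mathcal{J}_T$ $\iff$ $S_T^+ u_0 = \varphi^\ast$''.

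It remains to identify $(S_T^+)^{-1}(\varphi^\ast)$ with the convex cone $\mathcal{I}_T(\varphi^\ast)$, and for this I would invoke directly \cite[Theorem 2.6]{esteve2020inverse}. For the $x$-independent quadratic Hamiltonian \eqref{quadratic hamiltonian} and a reachable target $\varphi^\ast$, that result describes the full set of inverse designs: its smallest element is $S_T^- \varphi^\ast = \tilde{u}_0$, and every other inverse design is obtained by adding a nonnegative Lipschitz perturbation $\phi$ whose support avoids the set $X_T(\varphi^\ast)$ of footpoints of the backward optimal characteristics. This is geometrically transparent for \eqref{quadratic hamiltonian}, since the optimality system \eqref{optimality system thm} has straight-line solutions $s \mapsto x - (T-s) A \nabla \varphi^\ast(x)$ landing at $s=0$ on $X_T(\varphi^\ast) = \{\, z - T A \nabla \varphi^\ast(z)\,\}$; the value of any inverse design is rigidly pinned on $X_T(\varphi^\ast)$ and free above $\tilde{u}_0$ elsewhere. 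Combining this characterization with the reduction above yields the stated description of the solution set.

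The main obstacle I anticipate lies not in the characterization itself — which is quoted from \cite{esteve2020inverse} — but in cleanly justifying the equivalence between minimizing $\mathcal{J}_T$ over initial conditions and projecting onto $\mathcal{R}_T$. One must check that the admissible class is rich enough that the image of $S_T^+$ meets every $\varphi \in \mathcal{R}_T \cap L^2(\R^N)$ (supplied by $S_T^- \varphi$ through \eqref{ping pong}), and, conversely, that the perturbations $\phi$ appearing in $\mathcal{I}_T(\varphi^\ast)$ — which need not be compactly supported — still satisfy $S_T^+(\tilde{u}_0 + \phi) = \varphi^\ast$, so that $\mathcal{J}_T$ remains finite and minimal. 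This invariance is precisely what the support condition $\supp(\phi) \subset \R^N \setminus X_T(\varphi^\ast)$ guarantees, and it is what reconciles the description of the solution set as a cone in $\Lip(\R^N)$ with the admissibility notion (finiteness of $\mathcal{J}_T$) adopted in Remark \ref{rmk: C=0}.
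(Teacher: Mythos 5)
Your proposal is correct and follows exactly the route the paper intends: the paper offers no written proof of this corollary beyond the remark that it is obtained by ``combining Theorem \ref{thm:existence and uniqueness} with \cite[Theorem 2.6]{esteve2020inverse}'', and your argument is precisely that combination — reduce \eqref{L2 OCP S^+} to the unique $L^2$-projection $\varphi^\ast$ via the ping-pong identity \eqref{ping pong}, then identify the solution set with the preimage $(S_T^+)^{-1}(\varphi^\ast)$ as described by the cited theorem. Your closing caveats (whether $\tilde{u}_0=S_T^-\varphi^\ast$ and the perturbed data $\tilde{u}_0+\phi$ remain admissible in the sense of Remark \ref{rmk: C=0} despite not obviously lying in $\Lip_0(\R^N)$) point at a genuine looseness in the paper's own statement rather than a gap in your argument.
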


\begin{remark}\label{rmk:structure of solutions}
Observe that this corollary establishes in particular existence of solutions for the optimal control problem \eqref{L2  OCP S^+}. However, in view of the form of $\mathcal{I}_T (\varphi^\ast)$,  the solution is unique if and only if $\varphi^\ast$ is differentiable in $\R^N$.
\end{remark}

\section{Conclusion and perspectives}

In this work, we studied the differentiability of the nonlinear operator
$S_t^+$, defined in \eqref{forward viscosity operator intro},
which associates to any initial condition $u_0\in \Lip(\R^N)$, the viscosity solution to \eqref{HJ eq}
at time $t\in [0,T]$.
First we proved that for any $t\in (0,T]$, the operator $S_t^+$ is differentiable with respect to the $L_{loc}^1$-convergence at any initial condition $u_0\in \Lip(\R^N)$ and in any direction $w\in \Lip(\R^N)$, i.e.
we prove that for any $u_0,w\in \Lip(\R^N)$, it holds that
$$
\dfrac{S_t^+ (u_0 + \delta w) (\cdot) - S_t^+ u_0 (\cdot) }{\delta} \longrightarrow \partial_w S_t^+ u_0 (\cdot) \quad \text{as} \quad \delta\to 0^+, \quad \text{in} \quad L^1_{loc} (\R^N),
$$
where the function $\partial_w S_t^+ u_0 (\cdot)\in L^\infty_{loc}(\R^N)$ can be explicitly computed at all the points where $S_t^+ u_0(\cdot)$ is differentiable.  Hence, since $S_t^+ u_0(\cdot)$ is Lipschitz, and thus, differentiable for almost every $x\in \R^N$, the characterization provided in Theorem \ref{thm: differentiability general weak convergence} allows to explicitly determine the Gâteux derivative  $\partial_w S_t^+ u_0(\cdot)$ as a function in $L^\infty_{loc}(\R^N)$.

Then, for the one-dimensional case in space, and for quadratic Hamiltonians of the form 
$$H(x,p) = |p|^2 + f(x),$$
 we proved that, for any $u_0,w\in \Lip(\R^N)$, the function
$$
v(t,x) := \partial_w S_t^+ u_0(x),  \qquad \text{for} \ (t,x)\in (0,T]\times \R^N,
$$
is the unique duality solution to the linear transport equation with discontinuous transport coefficient given by
\begin{equation}
\label{transport coeff conclusions}
a(t,x) := H_p (x, \nabla_x S_t^+ u_0 (x)),
\end{equation}
and initial condition $v(0,\cdot) = w$.
The proof of this result relies on the theory of duality solutions for transport equations with discontinuous coefficient, developed by Bouchut-James in \cite{bouchut1998one} and by Bouchut-James-Mancini in \cite{bouchut2005uniqueness} for the multi-dimensional case.
The key ingredient to prove existence and uniqueness of a solution by duality is the fact that the transport coefficient $a(t,x)$ satisfies the one-sided Lipschitz condition \eqref{OSLC intro}, which can be ensured in the one-dimensional case and for quadratic Hamiltonians in any space dimension . However, the fact that the function $\alpha(t)$ in \eqref{OSLC intro} is not integrable in $(0,T)$ prevents us from directly using the results in \cite{bouchut2005uniqueness}.
In order to ensure the uniqueness of the duality solution, we prove that the backward solutions to the conservative dual equation can be extended (by continuity with respect to the weak star topology in the space of measures) at $t=0$ by a unique Radon measure. 
This unique extension provides uniqueness for the duality solution to the non-conservative forward equation, under the requirement that the initial condition is continuous.

Then we address the inverse design problem
\begin{equation}\label{OCP conclusions}
\underset{u_0\in \Lip_0(\R^N)}{\text{minimize}} \ \mathcal{J}_T (u_0) := \| S_T^+ u_0 - u_T\|_{L^2(\R^N)}^2,
\end{equation}
for some given target $u_T\in \Lip_0(\R^N)$.
The differentiability results obtained in this paper allow us to compute the gradient of the functional $\mathcal{J}_T(\cdot)$ by duality as
$$
D \mathcal{J}_T (u_0): \ w (\cdot) \longmapsto \partial_w \mathcal{J}_T(u_0) = 2 \displaystyle\int_{\R^N}  w(x) d\pi^0 (x),
$$
where $\pi^0$ is the unique Radon measure that extends at $t=0$ the backward solution to the conservative dual equation with terminal condition $S_T^+ u_0 - u_T$.

The computation of the gradient of $\mathcal{J}_T$ allows us to derive a necessary first-order optimality condition for the problem \eqref{OCP conclusions}, as well as the implementation of gradient-based methods in order to numerically approximate an optimal inverse design.
However, the fact that the gradient of $\mathcal{J}_T$ is a Radon measure prevents us from updating the initial condition in the exact opposite direction to the gradient, since it may exit the space of Lipschitz functions. Nonetheless, one can implement a modification of the gradient descent algorithm in which, at each step, the initial condition is updated in the opposite direction to a suitable Lipschitz approximation of the gradient of $\mathcal{J}_T$.
Finally, we include a section where we discuss the existence and uniqueness of optimal inverse designs for the optimization problem \eqref{OCP conclusions}.

\textbf{Open questions.} Here we give a list of question that we did not address in the present paper, and are left for future work.

\begin{enumerate}
\item[1.] The first open question is the possibility of extending the conclusion of Theorem \ref{thm:gateaux deriv transport} to the case of general convex Hamiltonians in any space dimension.
In this case, we cannot exploit the fact that the transport coefficient $a(t,x)$ defined in \eqref{transport coeff conclusions} satisfies the one-sided-Lipschitz condition, and hence, uniqueness of a duality solution to the linearized Hamilton-Jacobi equation cannot be ensured by means of our arguments.  Nonetheless, an alternative proof of the well-posedness of the transport equation might be possible by using the fact that the viscosity solution can be written as the value function of a problem of calculus of variations.

\item[2.] The results of this paper can be used in the context of optimal control problems subject to Hamilton-Jacobi equations of the form \eqref{HJ eq}, where the control is just the initial condition.
However, one may also consider optimal control problems subject to a Hamilton-Jacobi equation of the form
\begin{equation}\label{HJ eq open problems}
\left\{\begin{array}{ll}
\partial_t u + H (x,\nabla_x u, g) = 0 & \text{in} \ (0,T)\times \R^N \\
u(0, \cdot)= u_0 & \text{in} \ \R^N.
\end{array}\right.
\end{equation}
where $g\in\mathcal{G}_{ad}$ is a control parameter to be optimized, and $\mathcal{G}_{ad}$ is the given space of admissible controls.
In this context, one should study the sensitivity of the viscosity solution with respect to variations of $g$, which affect the Hamiltonian.

\item[3.] Our differentiability results apply to the case when the Hamiltonian is smooth and uniformly convex. 
These hypotheses are needed as they provide semiconcavity estimates for the viscosity solution, and these are crucial to establish the well-posedness of the transport equation resulting from the linearization of the Hamilton-Jacobi equation (recall that we need the transport coefficient to satisfy the OSLC condition).
Nonetheless, the differentiability of the viscosity solution with respect to the initial condition seems to be feasible also under less regularity assumptions.

Consider for instance the case of $x$-independent Hamiltonians $H(p)$  under the mere assumption that the map $p\mapsto H(p)$ is convex\footnote{This case includes non-smooth and non-strictly convex Hamiltonians such as $H(p) = \|p\|$, where $\|\cdot\|$ is any norm in $\R^N$.}.
In this case, the viscosity solution to the associated evolutionary Hamilton-Jacobi equation can be given by the Hopf-Lax formula \cite{alvarez1999hopf,bardi1984hopf} as
$$
u(t,x) = \min_{y\in \R^N} \left\{ u_0 (y) + t\, H^\ast \left( \dfrac{x-y}{t}  \right) \right\}.
$$
In view of this formula, the differentiability of the viscosity solution with respect to the initial condition $u_0$ might be addressed by using ideas related to Danskin's Theorem (see \cite{bernhard1995theorem, danskin1966theory}). However, the fact that the Hamiltonian is not assumed to be smooth nor strictly convex makes semiconcavity estimates unavailable, and then, it is not clear whether the theory of duality solutions from \cite{bouchut1998one,bouchut2005uniqueness} can be used to study the associated linear transport equation. 
\end{enumerate}

\bibliographystyle{abbrv}
\bibliography{mybibfile-HJ}

\end{document}